\documentclass{article}[12pt]

\usepackage{amsthm}
\usepackage{mathrsfs}
\usepackage{amsmath}
\usepackage{amsfonts}
\usepackage{enumerate}
\usepackage{amssymb}
\usepackage{dsfont}
\usepackage{color}
\usepackage{mathrsfs}
\usepackage{systeme}
\usepackage[natbibapa]{apacite}
\usepackage{float}

\usepackage{emptypage}

\usepackage{hyperref}                                                                                                                 
 \hypersetup{colorlinks,citecolor=blue,filecolor=blue,linkcolor=blue,urlcolor=blue}
\usepackage[pdftex]{graphicx}
\usepackage{float}
\usepackage{enumitem}
\allowdisplaybreaks

\addtolength{\textheight}{110pt}
\oddsidemargin=-10pt
\evensidemargin=+8pt
\topmargin=-0.5in
\textwidth=6.5in


\def\P{\mathbb{P}}
\def\R{\mathbb{R}}
\def\F{\mathcal{F}}
\def\E{\mathbb{E}}

\def \I{\mathbb{I}}

\def\e{\mathbf{e} }
\def \dd{\textup{d}}

\def \sign{\text{sign}}

\newtheorem{deff}{Definition}[section]
\newtheorem{thm}[deff]{Theorem}

\newtheorem{prop}[deff]{Proposition}
\newtheorem{lemma}[deff]{Lemma}
\newtheorem{cor}[deff]{Corollary}
\newtheorem{rem}[deff]{Remark}
\newtheorem{example}[deff]{Example}

\title{On the last zero process with an application in corporate bankruptcy}
\author{Erik J. Baurdoux\footnote{Department of Statistics, London School of Economics and Political Science. Houghton Street, {\sc London, WC2A 2AE, United Kingdom.} E-mail: e.j.baurdoux@lse.ac.uk} \quad \& \quad Jos\'e M. Pedraza\footnote{School of Mathematics, The University of Manchester. Oxford Road, {\sc Manchester, M13 9PL,  United Kingdom.} E-mail: jose.pedrazaramirez@manchester.ac.uk} }

\begin{document}
\maketitle
\begin{abstract}
\noindent
For a spectrally negative L\'evy process $X$, consider $g_t$, the last time $X$ is below the level zero before time $t\geq 0$. We use a perturbation method for L\'evy processes to derive an It\^o formula for the three-dimensional process $\{(g_t,t, X_t), t\geq 0 \}$ and its infinitesimal generator. Moreover, with $U_t:=t-g_t$, the length of a current positive excursion, we derive a general formula that allows us to calculate a functional of the whole path of $ (U, X)=\{(U_t, X_t),t\geq 0\}$ in terms of the positive and negative excursions of the process $X$. As a corollary, we find the joint Laplace transform of $(U_{\mathbf{e}_q}, X_{\mathbf{e}_q})$, where $\mathbf{e}_q$ is an independent exponential time, and the q-potential measure of the process $(U, X)$. Furthermore, using the results mentioned above, we find a solution to a general optimal stopping problem depending on $(U, X)$ with an application in corporate bankruptcy. Lastly, we establish a link between the optimal prediction of $g_{\infty}$ and optimal stopping problems in terms of $(U, X)$ as per \cite{baurdoux2020lp}. 
\end{abstract}

\noindent
{\footnotesize Keywords: L{\'{e}}vy processes, last zero, positive excursions, It\^o formula, optimal stopping, corporate bankruptcy.}

\noindent
{\footnotesize Mathematics Subject Classification (2020): 60G40, 60J45, 60G51, 91G50}

\section{Introduction}

Last passage times have received considerable attention in the recent literature. For instance, in the classic ruin theory (which describes the capital of an insurance company), the moment of ruin is considered as the first time the process is below level zero. However, in more recent literature, the last passage time below zero is treated as the moment of ruin, and the Cram\'er--Lundberg process has been generalised to spectrally negative L\'evy processes (see e.g. \cite{chiu2005passage}). Moreover, in \cite{paroissin2013first}, spectrally positive L\'evy processes are used for degradation models, and the last passage time above a fixed boundary is considered the failure time.\\

Let $X=\{X_t,t\geq 0 \}$ be a spectrally negative L\'evy process. For any $t\geq 0$ and $x\in \R$, we define $g_t^{(x)}$ as the last time that the process is below $x$ before time $t$, i.e.,

\begin{align*}
g_t^{(x)}=\sup\{0\leq s\leq t: X_s \leq x \},
\end{align*}
with the convention $\sup \emptyset=0$. We simply denote $g_t:=g_t^{(0)}$ for all $t\geq 0$. 

 A similar version of this random time is studied in \cite{revuz2004continuous} (see Chapter XII.3), namely the last hitting time of zero, before any time $t\geq 0$, to describe excursions straddling a given time. It is also shown that this random time at time $t=1$ follows the arcsine distribution. The last-hitting time of zero plays an essential role in the study of Az\'ema's martingale (see \cite{AzemaYor1989}). In \cite{SALMINEN1988}, the distribution of the last hitting time of a moving boundary is found. \\
 
It is well known that spectrally negative L\'evy processes are often used to model the surplus of an insurance company (see e.g. \cite{huzak2004ruin}, \cite{huzak2004ruinb}, \cite{chan2004some}, \cite{kluppelberg2004ruin}, among many others). The random variable $g_t$ provides essential information regarding the insurance company's solvency. For instance, a large value of $U_t:=t-g_t$ (the time of the current positive excursion away from zero) indicates that the insurance company's capital has not fallen below zero for a considerable amount of time, suggesting that the company is currently able to meet debts and financial obligations.  \\
 
L\'evy processes are also widely used in financial modelling. For instance, there is considerable work in the literature that adopts markets driven by L\'evy processes (see e.g. \cite{schoutens2003levy}, \cite{tankov2003financial}, \cite{kyprianou2006exotic}, among many others). Assume that a stock price is given by $Y_t=\exp(X_t)$, then it is of interest for an investor to know when is the last time, before the time $t\geq 0$, that the stock price is below a certain level $y^*>0$. That is, the investor is interested in knowing the value of $g_t^{(\log(y^*))}$.\\

In \cite{leland1994corporate} and \cite{manso2010performance}, it is assumed that equity holders endogenously choose the time of the bankruptcy of a firm. They suppose that the performance measure of the firm can be modelled by a time-homogeneous diffusion $Y=\{Y_t,t\geq 0 \}$. Then, the time of the bankruptcy is determined by the optimal stopping problem
\begin{align*}
\sup_{\tau \in \mathcal{T}} \E\left(\int_0^{\tau }e^{-rt}[\delta(Y_t)-c(Y_t)]\dd t\bigg| Y_t=y \right),
\end{align*}
where $c(y)$ is the coupon rate that the firm must pay to the debt holders, and $\delta(y)$ is the payout rate received by the firm. The performance $Y$ measures the ability of the firm to serve its future debt obligations and can be taken to be financial ratios, stock prices or credit ratings. Note that given a certain level $k\geq 0$, the current positive excursion above the level $k$, given by $V_t^{(k)}=t-\sup\{0\leq s\leq t: Y_s\geq k \}$, also provides information about the performance of the firm. Indeed, large values of $V_t^{(k)}$ suggest that the firm has been able to meet its obligations for a long time without a negative dividend rate. Hence, the default time of the firm can be generalised to consider the process $(V,Y)=\{(V_t,Y_t), t\geq 0 \}$ as its performance measure, where $Y$ can be taken to be an exponential L\'evy process.  \\

On the other hand, when the pricing of American-type options is necessary to solve optimal stopping problems (see e.g. \cite{jacka1991optimal}, \cite{mordecki1999optimal}, \cite{mordecki2002optimal} and \cite{kyprianou2006exotic}), and it is known that they are intimately related to free-boundary problems (see e.g. Chapter III in \cite{peskir2006optimal}). Then, their solution often requires techniques that involve a Markovian approach and applications of It\^o formula. Hence, an explicit expression of the infinitesimal generator of the process is needed. Moreover, in more recent literature, the development of fluctuation identities allowed the use of the ``guess and verify'' approach to solving optimal stopping problems driven by L\'evy processes (see, for example, \cite{avram2004exit}, \cite{alili2005some} and \cite{kyprianou2005novikov}). Hence, given the importance of the random time $g_t$, it is relevant to be able to solve optimal stopping problems of the form,
\begin{align*}
\sup_{\tau \in \mathcal{T}} \E\left( e^{-r\tau}f(g_{\tau},\tau,X_{\tau})+\int_0^{\tau} e^{-r\tau } G(g_s,s,X_s)\dd s\right).
\end{align*}
Indeed, in \cite{baurdoux2020lp}, an optimal stopping of the form above arises when predicting $g_{\infty}$ with stopping times in an $L_p$ sense. In Section \ref{subsec:optimalstoppingproblems}, we also propose an optimal stopping problem that generalises the work of \cite{leland1994corporate} and \cite{manso2010performance} on corporate bankruptcy. Hence, it is relevant to derive path properties of the process $\{ (g_t,t,X_t), t\geq 0\}$.\\

The process $\{ g_t,t\geq 0\}$ is non-decreasing and hence is a process of finite variation, implying that it belongs to the class of semi-martingales. Then It\^o formula for the process $\{ (g_t,t,X_t), t\geq 0 \}$ is well known (see e.g., \cite{protter2005}, Theorem 7.33 on pp. 81-82 or \cite{cohen2015stochastic}, Theorem 14.2.4 on p. 345) and is given for any function $F:\R^3 \mapsto \R$ in $C^{2,2,2}$ by 
\begin{align*}
&F(g_{t}, t,X_t)\\
&=F(g_0,0,X_0)  +\int_{0}^{t} \frac{\partial F}{\partial \gamma} (g_{s-},s,X_{s-}) \dd g_s+ \int_{0}^{t } \frac{\partial F }{\partial t} (g_{s},s,X_{s}) \dd s\\
&\qquad\hspace{-0.5cm}+\int_{0}^{t} \frac{\partial F }{\partial x} (g_{s-},s,X_{s-})\dd X_s+\frac{1}{2} \sigma^2 \int_{0}^{t } \frac{\partial^2 F }{\partial x^2} (g_{s},s,X_{s})\dd s\\
& \qquad\hspace{-0.5cm} +\sum_{0<s\leq t} \left( F(g_{s},s,X_{s}) -F(g_{s-},s,X_{s-})-\frac{\partial }{\partial \gamma} F(g_{s-},s,X_{s-}) \Delta  g_s-\frac{\partial F}{\partial x} (g_{s-},s,X_{s-}) \Delta  X_s \right).
\end{align*}
Note that using the local time-space formula given in \cite{peskir2007change} (see Theorem 3.2), we can assume that $F \in C^{1,1,i}$, where $i=2$ if $X$ is of infinite variation, and $i=1$ otherwise. Moreover, the formula above is given in terms of the jumps of the processes $\{ g_t,t\geq 0\}$ and $X$, and it does not reflect the dependence between $g_t$ and $X_t$. Indeed, some of the jumps of $\{ g_t,t\geq 0\}$ occur when $X$ jumps to $(-\infty,0)$ from the positive half line. Moreover, when a Brownian motion component is included in the dynamics of $X$, the stochastic process $\{g_t,t\geq 0 \}$ has infinitely many (small) jumps due to creeping. These facts imply that, to obtain a more explicit version of It\^o formula, a careful study of the trajectory of $t\mapsto g_t$ is required in terms of the excursions of $X$ away from zero. \\

On the other hand, we show that $\{ (g_t,t, X_t), t\geq 0 \}$ belongs to the family of strong Markov processes (see Proposition \ref{prop:Markovproperty}), and it is of interest to find its infinitesimal generator. It turns out that for Feller processes a general form of their infinitesimal generator is known. For instance, from \cite{revuz2004continuous} (see Theorem VII.1.13 on p. 289) we know that if $Z$ is a Feller process in $\R^d$, with $d$ a positive integer, and if $B\subset \R^d$ is any relative compact set, there exist functions $\sigma_{ij}$, $b_i$ and $c$ on $B$ and a kernel $\nu$ such that for any function $F\in C^{2}$ with compact support and $z\in B$, 
\begin{align*}
\mathcal{A}_Z F (z) 
&=c(z)F(z)+\sum_{i=1}^d  b_i(z) \frac{\partial }{\partial z_i} F(z) +\sum_{i,j=1}^d \sigma_{ij}(z)\frac{\partial^2}{\partial z_i \partial z_j}F(z)\\
& \qquad + \int_{\R^d \setminus \{0\}} \left( F(y) -F(z)- \sum_{i=1}^d (y_i-z_i) \frac{\partial }{\partial z_i} F(z) \right)\nu(z,\dd y).
\end{align*}
However, more explicit expressions for an It\^o formula and the infinitesimal generator are required in applications (for example, in optimal stopping and free boundary problems). In this work (see Theorem \ref{thm:ItoformulaforgttXt} and Corollary \ref{cor:infinitesimalgeneratorofgtX}), we give an expression for It\^o formula and the infinitesimal generator of the process $\{ (g_t,t,X_t), t\geq 0\}$ in terms of the dynamics of $X$ only.\\

We also consider, for any $t\geq 0$, the random variable $U_t=t-g_t$, the time of the current positive excursion away from zero. Then, having in mind the derivation of expressions for the potential measure of $(U,X)=\{(U_t,X_t),t\geq 0 \}$ and its  joint Laplace transform at an exponential time, we also derive an explicit formula, in terms of the positive and negative excursions of $X$, for functionals of the process $(U,X)$ of the form 
\begin{align*}
\E_{u,x}\left(\int_0^{\infty} e^{-qr} K(U_r,X_r)\dd r\right),
\end{align*}
for some function $K$ satisfying some conditions (see Theorem \ref{thm:integralofgr}), where $q\geq 0$ and $\P_{u,x}$ is the measure for which $(U_0,X_0)=(u,x)$ in view of the Markov property of $(U,X)$. The reader can find applications of these results in \cite{baurdoux2020lp}, which concerns the optimal prediction of the last zero of a spectrally negative L\'evy process and where the solution is given in terms of the process $(U,X)$. We also apply these results in Section \ref{sec:applicationtoOSandOPproblems} to solve a general optimal stopping problem.  \\

This paper is organised as follows. In Section \ref{sec:preliminaries}, we collect some fluctuation identities of spectrally negative L\'evy processes. Section \ref{sec:thelastzeroprocess} is dedicated to defining the last zero process, for which its basic properties are shown. Moreover, a derivation of It\^o formula, infinitesimal generator and formula for the expectation of a functional of $(U,X)$ are the main results of this section (see Theorems \ref{thm:ItoformulaforgttXt} and \ref{thm:integralofgr} and Corollary \ref{cor:infinitesimalgeneratorofgtX}). Then, the results mentioned above are applied to find formulas for the joint Laplace transform of $(U,X)$ at an exponential time, and a density of its $q$-potential measure is found. In Section \ref{sec:applicationtoOSandOPproblems}, we solve an optimal stopping problem (see Theorem \ref{thm:solutiontooptimalstopping}) driven by $(U,X)$. In particular, in Example \ref{ex:whentosellastock}, we propose an optimal stopping problem applied to corporate bankruptcy that depends on the trajectory of $(U,X)$. We also describe some optimal prediction problems of the last zero of the process. In this section, we emphasise the importance of the results developed in Section \ref{sec:thelastzeroprocess}. Lastly, in Section \ref{sec:proofs}, we include the main proofs of the paper. 
\section{Preliminaries}
\label{sec:preliminaries}

Let $(\Omega,\F, \mathbb{F}, \P)$ be a filtered probability space, where $\mathbb{F}=\{\F_t,t\geq 0 \}$ is a filtration which is naturally enlarged (see Definition 1.3.38 of \cite{bichteler2002stochastic}). A L\'evy process $X=\{X_t,t\geq 0 \}$ is an almost surely c\`adl\`ag process that has independent and stationary increments such that $\P(X_0=0)=1$. From the stationary and independent increments property, the law of $X$ is characterised by the distribution of $X_1$. We hence define the characteristic exponent of $X$, $\Psi(\theta):=-\log(\E(e^{i\theta X_1}))$. The L\'evy--Khintchine formula guarantees the existence of constants, $\mu \in \R$, $\sigma\geq 0$ and a measure $\Pi$ concentrated on $\R\setminus \{0\}$ with the property that $\int_{\R} (1\wedge x^2) \Pi(\dd x)<\infty$ (called the L\'evy measure) such that
\begin{align*}
\Psi(\theta)= i \mu\theta +\frac{1}{2}\sigma^2 \theta^2-\int_{\R} (e^{i \theta y}-1-i \theta y\I_{\{|y|<1 \}})\Pi(\dd y).
\end{align*}
Moreover, from the L\'evy--It\^o decomposition we can write 
\begin{align*}
X_t=\sigma B_t-\mu t+\int_{[0,t]}\int_{(-\infty,-1)\cup (1,\infty)} x N(\dd s\times \dd x)+\int_{[0,t]}\int_{(-1,1)} x( N(\dd s\times \dd x)-\dd s\Pi(\dd x)),
\end{align*}
where $N$ is a Poisson random measure on $\R^+\times \R$ with intensity $\dd t \times \Pi(\dd x)$ and $B=\{ B_t,t\geq 0 \}$ is an independent standard Brownian motion. We now state some properties and facts about L\'evy processes. The reader can refer, for example, to \cite{bertoin1998levy}, \cite{sato1999levy} and \cite{kyprianou2014fluctuations} for more details. Every L\'evy process $X$ is also a strong Markov $\mathbb{F}$-adapted process. Moreover, L\'evy processes satisfy a stronger property. Indeed, for any stopping time $\tau$ we define, on the event $\{ \tau<\infty\}$, the quantity $\widetilde{X}_t= X_{\tau+t}-X_t$, for each $t\geq 0$. Then, on the event $\{ \tau<\infty\}$, the process $\widetilde{X}=\{\widetilde{X}_t,t\geq 0 \}$ is independent of $\F_{\tau}$, has the same law as $X$ and is a L\'evy process. For all $x\in \R$, denote $\P_x$ as the law of $X$ when started at the point $x\in \R$, that is, $\E_x(\cdot)=\E(\cdot|X_0=x)$. Due to the spatial homogeneity of L\'evy processes, the law of $X$ under $\P_x$ is the same as that of $X+x$ under $\P$.\\

The process $X$ is a spectrally negative L\'evy process if it has no positive jumps ($\Pi(0,\infty)=0$) with no monotone paths. We state now some important properties and fluctuation identities of spectrally negative L\'evy processes, which will be useful in later sections, see \cite{bertoin1998levy}, Chapter VII or  Chapter 8 in \cite{kyprianou2014fluctuations} for details.\\

Due to the absence of positive jumps, we can define the Laplace transform of $X_1$. We denote $\psi(\beta)$ as the Laplace exponent of the process, that is, $\psi(\beta)=\log(\E(e^{\beta X_1}))$. Then for all $\beta \geq 0$,

\begin{align*}
\psi(\beta)=-\mu\beta +\frac{1}{2}\sigma^2 \beta^2+\int_{(-\infty,0)} (e^{\beta y}-1-\beta y\I_{\{y>-1 \}})\Pi(\dd y).
\end{align*}
It can be shown that $\psi$ is an infinitely differentiable and strictly convex function on $(0,\infty)$ that tends to infinity at infinity. In particular, $\psi'(0+)=\E(X_1)\in [-\infty,\infty)$ and determines the value of $X$ at infinity. When $\psi'(0+)>0$ the process $X$ drifts to infinity, i.e., $\lim_{t \rightarrow \infty} X_t=\infty$, when $\psi'(0+)<0$, $X$ drifts to minus infinity and the condition $\psi'(0+)=0$ implies that $X$ oscillates, that is, $\limsup_{t\rightarrow \infty} X_t =-\liminf_{t\rightarrow \infty} X_t=\infty$. We also define the right-inverse of $\psi$,

\begin{align*}
\Phi(q)=\sup\{\beta\geq 0: \psi(\beta)=q \}, \qquad q\geq 0.
\end{align*}
The process $X$ has paths of finite variation if and only if $\sigma=0$ and $\int_{(-1,0)} |x| \Pi(\dd x)<\infty$, otherwise $X$ has paths of infinite variation. In the latter case, we have that $X$ can be just written as a drift process minus a subordinator,

\begin{align}
\label{eq:Xhasfinitevariation}
X_t=d t+\int_{[0,t]}\int_{(-\infty,0)} x N(\dd s\times \dd x),
\end{align}
where \begin{align*}
d=-\mu -\int_{(-1,0)} x\Pi(\dd x).
\end{align*}
Since $X$ cannot have monotone paths, we necessarily have that $d>0$. Define $\tau_a^+$ as the first passage time above the level $a>0$,
\begin{align*}
\tau_a^+=\inf\{t>0: X_t>a \},
\end{align*}
where here and throughout the paper, we use the usual convention that $\inf\emptyset=\infty$. Then, for any $a>0$ and $q\geq 0$, the Laplace transform of $\tau_a^+$ is given by 
\begin{align}
\label{eq:laplacetransformtau0}
\E(e^{-q \tau_a^+}\I_{\{\tau_a^+<\infty \}})=e^{-\Phi(q)a}.
\end{align}
An essential family of functions for spectrally negative L\'evy processes are the scale functions, $W^{(q)}$. For all $q\geq 0$, the scale function $W^{(q)}:\R \mapsto \R_+$ is such that $W^{(q)}(x)=0$ for all $x<0$ and it is characterised on the interval $[0,\infty)$ as a strictly increasing and continuous function with Laplace transform given by
\begin{align}
\label{eq:laplacetransformofWq}
\int_0^{\infty} e^{-\beta x} W^{(q)}(x)\dd x=\frac{1}{\psi(\beta)-q}, \qquad \text{ for } \beta >\Phi(q).
\end{align} 
For the case $q=0$ we simply denote $W=W^{(0)}$. When $X$ has paths of infinite variation, $W^{(q)}$ is continuous on $\R$ and $W^{(q)}(0)=0$ for all $q\geq 0$, otherwise, we have $W^{(q)}(0)=1/d$, where $d>0$. The behaviour of $W^{(q)}$ at infinity is the following. For $q\geq 0$ we have, $\lim_{x\rightarrow \infty} e^{-\Phi(q)x}W^{(q)}(x)=\Phi'(q)$.\\

 There are some important fluctuation identities of L\'evy processes in terms of the scale functions. In particular, we list some that will be useful in later sections. Denote by $\tau_x^-$ as the first time $X$ is strictly below the level $x\leq 0$, i.e.,
\begin{align*}
\tau_x^-=\inf\{t>0: X_t<x \}.
\end{align*}
The Laplace transform of $\tau_a^+$, on the event of hitting the level $a>0$ before entering the set $(-\infty,0)$, is given by 
\begin{align}
\label{eq:laplacetransformoftaua+beforetau0-}
\E_x\left(e^{-q\tau_a^+} \I_{\{ \tau_a^+< \tau_0^-\}} \right)=\frac{ W^{(q)}(x)}{W^{(q)}(a)}
\end{align}
for any $x\leq a$. The joint Laplace transform of $\tau_0^-$ and $X_{\tau_0^-}$ is  
\begin{align}
\label{eq:jointlaplacetransformtau0-Xtau0-}
\E_x(e^{-q \tau_0^-+\beta X_{\tau_0^-}} \I_{\{\tau_0^- <\infty \}})=e^{\beta x} \mathcal{I}^{(q,\beta)}(x)
\end{align}
for all $x>0$, $q\geq 0$ and $\beta\geq 0$, where the function $\mathcal{I}^{(q,\beta)}$ is given by
\begin{align}
\label{eq:functionI}
\mathcal{I}^{(q,\beta)}(x):=1+(q-\psi(\beta)) \int_0^x e^{-\beta y} W^{(q)}(y) \dd y-\frac{q-\psi(\beta)}{\Phi(q)-\beta} e^{-\beta x}W^{(q)}(x),	 \qquad x\in \R.
\end{align}
When $\beta=\Phi(q)$, for some $q\geq 0$, we understand the equation above in the limiting sense, i.e.,
\begin{align*}
\mathcal{I}^{(q,\Phi(q))}(x)=1-\psi'(\Phi(q)+) e^{-\Phi(q) x}W^{(q)}(x), \qquad x\in \R.
\end{align*}
Since $X$ has only negative jumps, we have that it only creeps upwards, that is,
\begin{align}
\label{eq:probabilityofcreepingupwards}
    \P(X_{\tau_x^+}=x |\tau_x^+<\infty )=1
\end{align}
for any $x>0$. Moreover, $X$ creeps downwards if and only if $\sigma>0$ and we have
\begin{align}
\label{eq:probabilityofcreepingdownwards}
    \P_x(X_{\tau_0^-}=0,\tau_0^-<\infty)=\frac{\sigma^2}{2}\left(W'(x)-\Phi(0)W(x) \right)
\end{align}
for any $x>0$. Denote by $\sigma_x^-$ the first time the process $X$ is below or equal to the level $x$, that is,
\begin{align}
\label{eq:definitionofsigmax-}
\sigma_x^-=\inf\{t>0: X_t \leq x \}.
\end{align}
For $t\geq 0$, let $\underline{X}_t=\inf_{0\leq s\leq t}X_s$ and let $\e_q$ be an exponential random variable (independent of $X$) with mean $1/q$, for $q\geq 0$. Since 
\begin{align*}
\E(e^{-q \sigma_{x}^-}\I_{\{\sigma_x^- <\infty\}})=\P(\e_q >\sigma_x^-)=\P(\underline{X}_{\e_q} \leq -x)
\end{align*}
for all $x \leq 0$, and the fact that the random variable $\underline{X}_{\e_q}$ is continuous on $(-\infty,0)$, we have that, for any $x>0$, the stopping times $\sigma_x^-$ and $\tau_x^-$ have the same distribution. When $X$ is of infinite variation, $X$ enters instantly to the set $(-\infty,0)$, whilst in the finite variation case, there is a positive time before the process enters it. That implies that in the infinite variation case, $\tau_0^-=\sigma_0^-=0$ almost surely. Note that in the finite variation case, since $0$ is irregular for $(-\infty,0]$ (see discussion in \cite{kyprianou2014fluctuations} on p. 157) and due to equation \eqref{eq:probabilityofcreepingdownwards}, we have that $\sigma_0^-=\tau_0^->0$ $\P$-a.s. \\
 
 Let $q>0$ and $a\in \R$. The $q$-potential measure of $X$ killed on exiting $(-\infty,a]$ is absolutely continuous with respect to Lebesgue measure with a density given by
\begin{align}
\label{eq:qpotentialdensitytkillingonexitinga}
 e^{-\Phi(q)(a-x)}W^{(q)}(a-y) -W^{(q)}(x-y), \qquad  x,y\leq a.
\end{align}
Similarly, the $q$-potential measure of $X$ killed on exiting $[0,\infty)$
\[\int_0^{\infty} e^{-qt} \P_x(X_t \in \dd y,t< \tau_0^-)\dd t\] is absolutely continuous with respect to Lebesgue measure, and it has a density given by
\begin{align}
\label{eq:qpotentialdensitytkillingonexiting0}
e^{-\Phi(q) y}W^{(q)}(x) -W^{(q)}(x-y) \qquad x,y\geq 0.
\end{align}
%
We have that the stochastic process 
\begin{align*}
\{ e^{-q(t\wedge \tau_0^-\wedge \tau_a^+)} W^{(q)}(X_{t\wedge \tau_0^-\wedge \tau_a^+}), t\geq 0\}
\end{align*}
is a martingale under $\P_x$, for any $a\in (0,\infty]$, $q\geq 0$ and $x\in \R$. \\

We close this section by stating a version of It\^o formula available in the literature applied to spectrally negative L\'evy processes that will be used in the sequel. Let $F:\R^2\mapsto \R$ and a set $D\subset \R^2$ with its closure denoted by $\bar{D}$. We say that $F$ is $C^{j_1,j_2}$ in $\bar{D}$, for $j_1,j_2 \in \{0,1,2,\ldots\}$, if $F$ restricted to $D$ coincides with a function $F_1:\R^2\mapsto \R$ which is $C^{j_1,j_2}$ on $\R_+\times \R$. Suppose that $X$ is any spectrally negative L\'evy process.
Let $i=1$ if $X$ is of finite variation and $i=2$, otherwise. From \cite{peskir2007change} (see Theorem 3.2 for the local time-space formula for general semimartingales) we deduce that if $F:\R^2 \mapsto \R$ is a continuous function such that, for some $b\in \R$, $F$ is $C^{1,i}$ on $[0,\infty)\times (-\infty,b]$ and on $[0,\infty)\times [b,\infty)$, the following change-of-variable formula holds:
\begin{align}
&F(t,X_t)\nonumber\\
&=F(0,X_0)+\int_0^{t} \frac{1}{2}\left( \frac{\partial F}{\partial t}(s,X_{s-}+)+\frac{\partial F}{\partial t}(s,X_{s-}-)  \right) \dd s \nonumber\\
&\qquad+\int_0^{t} \frac{1}{2}\left( \frac{\partial F}{\partial x}(s,X_{s-}+)+\frac{\partial F}{\partial x}(s,X_{s-}-)  \right) \dd  X_s+\frac{1}{2}\sigma^2 \int_0^{t} \frac{1}{2}\left( \frac{\partial^2 F}{\partial x^2}(s,X_{s-}+)+\frac{\partial^2 F}{\partial x^2}(s,X_{s-}-)  \right) \dd  s\nonumber\\
 &\qquad+ \int_{[0,t]}\int_{(-\infty,0)}\left[F(s,X_{s-}+y)-F(s,X_{s-})-\frac{1}{2}\left( \frac{\partial F}{\partial x}(s,X_{s-}+)+\frac{\partial F}{\partial x}(s,X_{s-}-)    \right)y \right]N(\dd s\times \dd y)\nonumber\\
 \label{eq:semimartingalelocaltimeItoformula}
 &\qquad+ \frac{1}{2} \int_0^t \left( \frac{\partial F}{\partial x}(s,X_{s-}+)-\frac{\partial F}{\partial x}(s,X_{s-}-) \I_{\{ X_{s-}=b, X_s=b\}}    \right) \dd L^b_s(X),
\end{align}
where $L_s^b(X)$ is the local time of $X$ at the level $b$ given in terms of the Tanaka formula, namely,
\begin{align}
|X_t-b|&=|X_0-b|+\int_0^t \sign(X_{s-}-b)\dd X_s+L_t^b(X) \nonumber\\
\label{eq:definitionofLocaltimebtanaka}
&\qquad +\int_{[0,t]}\int_{(-\infty,0)} \left[|X_{s-}+y-b|-|X_{s-}-b|-\sign(X_{s-}-b)y \right] N(\dd s \times \dd y).
\end{align}

\section{The last zero process}
\label{sec:thelastzeroprocess}
Let $X$ be a spectrally negative L\'evy process. Recall that $g_t^{(x)}$ is the last time that the process is below $x$ before time $t$, i.e.,
\begin{align*}
g_t^{(x)}=\sup\{0\leq s\leq t: X_s \leq x \},
\end{align*}
with the convention $\sup \emptyset=0$. We simply denote $g_t:=g_t^{(0)}$, for all $t\geq 0$. For any stopping time $\tau$, the random variable $g_{\tau}^{(x)}$ is $\F_{\tau}$ measurable. In particular we get that $\{g_t^{(x)}, t\geq 0 \}$ is adapted to the filtration $\{\F_t,t\geq 0 \}$. Moreover, it is easy to show that for a fixed $x\in \R$, the stochastic process $\{ g_t^{(x)},t\geq 0\}$ is non-decreasing, right-continuous with left limits. Similarly, for a fixed $t\geq 0$, the mapping $x\mapsto g_t^{(x)}$ is non-decreasing and almost surely right-continuous with left limits. \\

It can be easily seen that the process $\{g_t,t\geq 0\}$ is not a Markov process, particularly not a L\'evy process. However, the strong Markov property holds for the three-dimensional process $\{(g_t,t, X_t),t\geq 0 \}$. We delegate its proof to the Section \ref{sec:proofs}. 
\begin{prop}
\label{prop:Markovproperty}
The process $\{ (g_t,t,X_t),t\geq 0 \}$ is a strong Markov process with respect to the filtration $\{\F_t,t\geq 0 \}$ with state space given by $E_g=\{ (\gamma,t,x)  : 0\leq \gamma < t \text{ and } x>0\}\cup \{ (\gamma,t,x) : 0 \leq \gamma=t \text{ and } x\leq 0\}$. Moreover, we have that for any measurable positive function $h: E_g \mapsto \R$,  any $s\geq 0$ and any stopping time $\tau\geq 0$,
\begin{align*}
\E( h(g_{\tau+s},\tau+s,X_{\tau+s})|\F_{\tau})=f_s(g_{\tau},\tau,X_{\tau}),
\end{align*}
where for any $(\gamma,t,x)\in E_g$,
\begin{align}
\label{eq:measuretgammax}
f_s(\gamma,t ,x)&=\E_x(h(\gamma,t+s, X_s)\I_{\{ \sigma_0^->s\}})+\E_x(h(g_s+t,t+s, X_s)\I_{\{ \sigma_0^-\leq s\}}).
\end{align}
\end{prop}

In the spirit of the above Proposition, we define, for $(\gamma,t,x)\in E_g$, the probability measure $\P_{\gamma,t,x}$ in the following way: for every measurable and positive function $h$ we set
\begin{align*}
\E_{\gamma,t,x}(h(g_{t+s},t+s,X_{t+s})):=\E \left( h(g_{t+s},t+s,X_{t+s})\bigg|(g_t,t,X_t)=(\gamma,t,x) \right)=f_s(\gamma,t ,x),
\end{align*}
where $f_s$ is given in \eqref{eq:measuretgammax}. Then we can write $\P_{\gamma,t,x}$ in terms of $\P_x$ by
\begin{align}
\label{eq:initialmeausreP}
\E_{\gamma,t,x}(h(g_{t+s},t+s,X_{t+s}))=\E_x(h(\gamma,t+s, X_s)\I_{\{ \sigma_0^->s\}})+\E_x(h(g_s+t,t+s, X_s)\I_{\{ \sigma_0^-\leq s\}}).
\end{align}
Define $U_t=t-g_t$ as the length of the current excursion above the level zero. As a direct consequence of the form of $f_s$ given in \eqref{eq:measuretgammax}, we have that the process $\{(U_t,X_t),t\geq 0 \}$ is also a strong Markov process with state space given by $E=[(0,\infty)\times (0,\infty)]\cup \{ (0,x)\in \R^2 : x\leq 0\}$. Indeed, let $f:E\mapsto \R$ be any positive measurable function, taking $h(\gamma,t,x)=f(t-\gamma,x)$ in \eqref{eq:measuretgammax}, we obtain that 
\begin{align*}
f_{s}(\gamma,t,x)&=\E_x(h(\gamma,t+s, X_s)\I_{\{ \sigma_0^->s\}})+\E_x(h(g_s+t,t+s, X_s)\I_{\{ \sigma_0^-\leq s\}})\\
&=\E_x(f(t-\gamma+s, X_s)\I_{\{ \sigma_0^->s\}})+\E_x(f(U_s, X_s)\I_{\{ \sigma_0^-\leq s\}})\\
&=f_s(0,t-\gamma,x).
\end{align*}
From this, we deduce that
\begin{align*}
\E(f(U_{\tau+s},X_{\tau+s})|\F_{\tau})=f_s(g_{\tau},\tau,X_{\tau} )=f_s(0,U_{\tau},X_{\tau} )=\E(f(U_{\tau+s},X_{\tau+s})|\sigma(U_{\tau},X_{\tau})).
\end{align*}
We hence can define a probability measure $\P_{u,x}$, for all $(u,x)\in E$, by
\begin{align}
\label{eq:initialmeausrePforU}
\E_{u,x}(f(U_s,X_{s}))=\E_x(f(u+ s, X_s)\I_{\{ \sigma_0^->s\}})+\E_x(f(U_s, X_s)\I_{\{ \sigma_0^-\leq s\}}),
\end{align}
for any positive and measurable function $f$.

\begin{rem}
\label{rem:behaviourofgt}
We know that, for any $x\in \R$, the stochastic process $\{ g_t^{(x)}, t\geq 0 \}$ has non-decreasing paths. That directly implies that $g_t^{(x)}$ is a process of finite variation, and then it has a countable number of jumps. Moreover, by a close inspection to the definition of $g_t^{(x)}$, we notice that $g_t^{(x)}=t$ on the set $\{t\geq 0: X_t \leq x\}$, it is flat when $X$ is in the set $(x,\infty)$ and it has a jump when $X$ enters the set $(-\infty,x]$. Moreover, if $X$ is a process of infinite variation, we know that the set of times $X$ visits the level $x$ from above may be infinite with positive probability. That implies that when $X$ is of infinite variation, $t\mapsto g_t^{(x)}$ may have an infinite number of arbitrary small jumps with positive probability.
\end{rem}

In the following Theorem, we give a more explicit expression for the It\^o formula for the process $(g_t,t, X_t)$ in terms of the random measure $N$. Note that this formula will be helpful later in deriving the infinitesimal generator of $(g_t,t, X_t)$. The reader can find its proof in Section \ref{sub:proofofItoforumula}.
\begin{thm}[It\^o formula]
\label{thm:ItoformulaforgttXt}
Let $X$ be any spectrally negative L\'evy process, we define $i=1$ if $X$ is of finite variation and $i=2$, otherwise. Let $F:\bar{E}_g \mapsto \R$ be a continuous function that satisfies:

\begin{enumerate}
\item[i)] The mapping $(t,x)\mapsto F(t,t,x)$ is $C^{1,i}$ on $[0,\infty)\times(-\infty,0]$, for all $t\geq 0$;

\item[ii)] $F$ is $C^{0,1,i}$ on $\{(\gamma,t,x)\in \R_+^2\times \R: 0\leq \gamma\leq t \text{ and } x\geq 0  \}$.

\item[iii)] In the case that $\sigma>0$, $F$ is such that $\lim_{h\downarrow 0} F(\gamma,t,h)=F(t,t,0)$, for all $0\leq \gamma\leq t$, and 
\begin{align}
\label{eq:pastingatzeroItoformula}
\frac{\partial F}{\partial x}(t,t,0+)=\frac{\partial F }{\partial x}(t,t,0-)
\end{align}
for all $t\geq 0$. 
%
%
\end{enumerate}
Then we have the following version of It\^o formula for the three dimensional process $\{(g_t,t,X_t),t\geq 0\}$.

\begin{align}
F(g_{t}, t,X_t)&=F(g_0,0,X_0)  +\int_{0}^{t}   \frac{\partial F_g }{\partial t } (s,X_{s-})\I_{\{g_{s-}=s \}}\dd s+ \int_{0}^{t} \frac{\partial F }{\partial t} (g_{s-},s,X_{s-})\I_{\{ g_{s-}<s\}} \dd s\nonumber\\
&\qquad+\int_{0}^{t} \frac{\partial F }{\partial x} (g_{s-},s,X_{s-})\dd X_s+\frac{1}{2} \sigma^2 \int_{0}^{t } \frac{\partial^2 F }{\partial x^2} (g_{s},s,X_{s})\dd s\nonumber\\
& \qquad +\int_{[0,t] } \int_{(-\infty,0)} \left[ F(g_{s},s,X_{s^-}+y) -F(g_{s-},s,X_{s-})-y\frac{\partial F}{\partial x} (g_{s-},s,X_{s-}) \right]N(\dd s\times \dd y)\nonumber\\
&=F(g_0,0,X_0)+ \int_{0}^{t}   \frac{\partial F_g }{\partial t } (s,X_{s-})\I_{\{g_{s-}=s \}}\dd s+ \int_{0}^{t} \frac{\partial F }{\partial t} (g_{s-},s,X_{s-})\I_{\{ g_{s-}<s\}} \dd s \nonumber\\
&\qquad+\int_{0}^{t}\frac{\partial F }{\partial x} (g_{s-},s,X_{s-})\dd X_s+\frac{1}{2} \sigma^2 \int_{0}^{t} \frac{\partial^2  F}{\partial x^2} (g_{s-},s,X_{s-}) \dd s\nonumber\\
& \qquad +\int_{[0,t] } \int_{(-\infty,0)} \left[ F(s,s,X_{s-}+y) -F(s,s,X_{s-})-y\frac{\partial F }{\partial x} (s,s,X_{s-}) \right]\I_{\{g_{s-}=s \}}N(\dd s\times \dd y)\nonumber\\
&\qquad+\int_{[0,t]}\int_{(-\infty,0)} \left[F(g_{s-},s,X_{s-}+y)-F(g_{s-},s,X_{s-})-y\frac{\partial F }{\partial x} (g_{s-},s,X_{s-})\right]\nonumber\\
&\qquad\qquad\qquad \times \I_{\{X_{s-}+y >0 \}}\I_{\{ g_{s-}<s\}} N(\dd s\times \dd y)\nonumber \\
&\qquad+ \int_{[0,t]}\int_{(-\infty,0)} \left[F(s,s,X_{s-}+y)-F(g_{s-},s,X_{s-})-y\frac{\partial F }{\partial x} (g_{s-},s,X_{s-}) \right]\nonumber\\
\label{eq:ItoformulaforgttXt}
&\qquad\qquad\qquad \times \I_{\{X_{s-}+y \leq 0 \}} \I_{\{ g_{s-}<s\}}N(\dd s\times \dd y),
\end{align}
where $ F_g(t,x):= F(t,t,x)$ for $t \geq 0$ and $x\leq 0$.

\end{thm}
\begin{rem}
\begin{itemize}
\item[i)]  When $\sigma>0$, the Brownian motion part of $X$ implies that $X$ can visit the interval $(-\infty,0]$ by creeping. That means that $t\mapsto g_t$ has two types of jumps: those as a consequence of $X$ jumping from the positive half line to $(-\infty,0)$ and those as a consequence of creeping. The limit condition imposed for $F$ and \eqref{eq:pastingatzeroItoformula} (when $\sigma>0$) ensure that the jumps due to the Brownian component vanish. Otherwise, a more careful analysis involving the local time needs to be done.
\item[ii)] Note that the limit condition imposed on $F$, when $\sigma>0$, comes naturally when we have functions that depend on expectations of the process $\{ (g_t,t,X_t), t\geq 0\}$. For example, if $f$ is a bounded continuous function, we have that
\begin{align*}
F(\gamma,t,x):=\E_{\gamma,t,x}(f(g_{t+s},t+s,X_{t+s}))=\E_x(f(\gamma,t+s, X_s)\I_{\{ \sigma_0^->s\}})+\E_x(f(g_s+t,t+s, X_s)\I_{\{ \sigma_0^-\leq s\}})
\end{align*}
satisfies that $\lim_{h\downarrow 0} F(\gamma,t,h)=F(t,t,0)$ for all $\gamma\leq t$ and $s>0$, when $\sigma>0$.

\item[iii)] Note that the proof relies on applying the appropriate version of It\^o formula to $F$ on the regions of $E_g$ where $x>0$ and $x<0$. So analogous results would be obtained if we relax the regularity conditions of $F$ and apply an appropriate version of It\^o formula (see e.g. Theorem IV.70 in \cite{protter2005}, Theorem 3.2 in \cite{peskir2007change}, Theorem 7 in \cite{kyprianou2007note}, etc.). For instance, it is possible to use Theorem 3.2 in \cite{peskir2007change} to obtain the same result as in Theorem \ref{thm:ItoformulaforgttXt} by replacing condition \eqref{eq:pastingatzeroItoformula} with the more restrictive condition: 
\begin{align*}
\frac{\partial F}{\partial x}(\gamma,t,0+)=\frac{\partial F }{\partial x}(t,t,0-) \qquad \text{for each }0\leq \gamma\leq t.
\end{align*}
It turns out that for some applications in optimal stopping condition \eqref{eq:pastingatzeroItoformula} is satisfied, whereas the condition above fails as it is found in Proposition \ref{prop:SolutiontocorporatebankrupcyOS} and Remark \ref{rem:PropexponentialOStimebankruptcy}. 

\end{itemize}
\end{rem}
Following the definition of \cite{revuz2004continuous} (see Definition VII.1.8 on p. 285), we say that $F$ belongs to the domain of the \emph{extended infinitesimal generator} of the process $Z_t=(g_t,t,X_t)$ if there exists a Borel function $f:E_g\mapsto \R$ such that $\int_0^{t}|f(X_s)| \dd s<\infty$, almost surely, for all $t>0$, and 
\begin{align*}
F(g_t,t,X_t)-F(g_0,0,X_0)-\int_0^{t} f(g_s,s,X_s)\dd s
\end{align*}
is a $\P_{\gamma,t,x}$-right-continuous martingale, for each $(\gamma,t,x)\in E_g$. If such a function exists, we write $\mathcal{A}_{Z}F(\gamma,t,x)=f(\gamma,t,x)$ and call it the \emph{extended infinitesimal generator}.

Now that we have a more explicit version of It\^o's formula for the three-dimensional process $(g_t,t, X_t)$ in terms of the Poisson random measure $N$, we are ready to state an explicit formula for its extended infinitesimal generator. The following Corollary follows directly from equation \eqref{eq:ItoformulaforgttXt} and standard arguments, so its proof is omitted (see, e.g., Proposition 2.4 in \cite{lamberton2008critical}, Proposition 8.16 in \cite{tankov2003financial} or the proof of Theorem 6.7.4 in \cite{applebaum_2009}).
\begin{cor}
\label{cor:infinitesimalgeneratorofgtX}
Suppose that $X$ and $F$ satisfy the conditions of Theorem \ref{thm:ItoformulaforgttXt} and further assume that $F$ and its derivatives are bounded functions. Then the infinitesimal generator $\mathcal{A}_Z$ of the process $Z_t=(g_t,t,X_t)$ is given by:

\begin{align}
\mathcal{A}_Z F (\gamma,t,x) &=\frac{\partial F_g }{\partial t} (t,x) \I_{\{x\leq 0 \}}+\frac{\partial F}{\partial	t} (\gamma,t,x)\I_{\{x> 0 \}}  -\mu \frac{\partial F }{\partial	x} (\gamma,t,x)+ \frac{1}{2} \sigma^2  \frac{\partial^2 F}{\partial x^2} (\gamma,t,x) \nonumber\\
& \qquad + \int_{(-\infty,0)} \left( F(\gamma,t,x+y) -F(\gamma,t,x)-y\I_{\{y>-1\}}\frac{\partial F}{\partial x} (\gamma,t,x) \right)\I_{\{x+y >0 \}}\Pi(\dd y)\nonumber\\
& \qquad + \int_{(-\infty,0)} \left( F(t,t,x+y) -F(t,t,x)-y\I_{\{y>-1\}}\frac{\partial F }{\partial x} (t,t,x) \right)\I_{\{x\leq 0 \}}\Pi(\dd y)\nonumber\\
& \qquad +  \int_{(-\infty,0)} \left( F(t,t,x+y)-F(\gamma,t,x)-y\I_{\{y>-1\}} \frac{\partial F }{\partial x} (\gamma,t,x) \right)\I_{\{x>0 \}}\I_{\{x+y <0 \}}\Pi(\dd y) \label{eq:infinitesimalgeneratorofgtX}
\end{align}
for all $(\gamma,t,x) \in E_g$.
\end{cor}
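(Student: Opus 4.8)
The plan is to obtain $\mathcal{A}_Z F(\gamma,t,x)$ from Dynkin's formula
$\mathcal{A}_Z F(\gamma,t,x)=\lim_{h\downarrow 0} h^{-1}\big(\E_{\gamma,t,x}[F(Z_h)]-F(\gamma,t,x)\big)$, where $Z=(g,\cdot,X)$ is started at $(\gamma,t,x)\in E_g$ under $\P_{\gamma,t,x}$, by inserting the semimartingale decomposition of Theorem \ref{thm:ItoformulaforgttXt}. Applying the It\^o formula on $[0,h]$ and taking expectations, the first task is to discard the two martingale contributions, namely the Brownian integral $\sigma\int_0^h \partial_x F(g_{s-},s,X_{s-})\,\dd B_s$ and the compensated jump integral. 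Both are genuine martingales because $F\in C_b^{1,1,2}(E_g)$ has bounded derivatives: the jump integrand is dominated by a second-order Taylor estimate for small $|y|$ together with $\int_{\R}(1\wedge y^2)\Pi(\dd y)<\infty$, and by $2\|F\|_\infty$ for $|y|\ge 1$ since $\Pi((-\infty,-1))<\infty$.

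Next I would reorganise the remaining terms. The drift of $X$ in the L\'evy--It\^o decomposition contributes $-\mu\,\partial_x F$, the Brownian term contributes $\tfrac12\sigma^2\partial_x^2 F$, and the two remaining derivatives contribute $\partial_\gamma F\,\I_{\{X_s\le 0\}}$ and $\partial_t F$. For the jump part I would combine the genuine jump integral of the It\^o formula with the jump component of $\int_0^h\partial_x F\,\dd X_s$; the $y\,\partial_x F$ terms coming from jumps outside $(-1,0)$ cancel, and after taking expectations (replacing $N(\dd s\times\dd y)$ by its compensator $\dd s\,\Pi(\dd y)$) the jump contribution reduces to
$\E_{\gamma,t,x}\!\left[\int_0^h\!\int_{(-\infty,0)}\big(F(g_s,s,X_{s-}+y)-F(g_{s-},s,X_{s-})-y\I_{\{y>-1\}}\partial_x F(g_{s-},s,X_{s-})\big)\,\dd s\,\Pi(\dd y)\right]$.
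Dividing by $h$ and letting $h\downarrow 0$, the right-continuity of $Z$ and dominated convergence (with the bounds above) replace $(g_{s-},s,X_{s-})$ by $(\gamma,t,x)$ in every term.

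The decisive step is to evaluate the post-jump value $g_s$ inside this integral, and this is where the three jump terms of \eqref{eq:infinitesimalgeneratorofgtX} appear. Using the pathwise description of Remark \ref{rem:behaviourofgt} --- $g$ stays put while $X>0$ and jumps to the current time exactly when $X$ enters $(-\infty,0]$ --- I would split $(-\infty,0)$ according to the signs of $x$ and $x+y$: if $x>0$ and $x+y>0$ the last zero does not move, $g_s=\gamma$, giving the first integral carrying $\I_{\{x+y>0\}}$; if $x>0$ and $x+y<0$ the jump crosses zero, $g_s=t$, giving the third integral carrying $\I_{\{x>0\}}\I_{\{x+y<0\}}$; and if $x\le 0$ then $\gamma=t$ and every jump keeps $X$ negative with $g_s=t$, giving the second integral carrying $\I_{\{x\le 0\}}$. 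Since $\I_{\{x+y>0\}}$ already forces $x>0$, these three indicators partition the admissible jumps and their sum reproduces \eqref{eq:infinitesimalgeneratorofgtX}.

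I expect the main obstacle to be the rigorous treatment of the case $\sigma>0$. Then $X$ may enter $(-\infty,0]$ by creeping rather than by a jump, so $g$ acquires infinitely many arbitrarily small jumps on the zero set of $X$ (Remark \ref{rem:behaviourofgt}), and one must show that these do not produce an additional local-time term in the generator. This is precisely the role of the assumption $\lim_{h\downarrow 0}F(\gamma,t,h)=F(t,t,0)$: the upward jump of $g$ to the current time that accompanies a continuous crossing of $0$ by $X$ creates no discontinuity of $s\mapsto F(Z_s)$, so the creeping enters only through $\tfrac12\sigma^2\partial_x^2F$ and the $\I_{\{X_s\le 0\}}\partial_\gamma F$ term, with no boundary contribution. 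It remains to record the measurability needed for the compensation formula, namely that the post-jump integrand is a deterministic function of the predictable triple $(g_{s-},s,X_{s-})$ and the jump size $y$; this is routine.
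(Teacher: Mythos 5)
Your proposal is correct and matches the paper's intended argument: the paper omits the proof, stating that it follows directly from the refined It\^o formula \eqref{eq:Itoformulabycases} (in which the jump integral is already split according to the signs of $X_{s-}$ and $X_{s-}+y$, exactly producing the three jump terms of \eqref{eq:infinitesimalgeneratorofgtX}) together with standard arguments. Your Dynkin-formula computation --- taking expectations in Theorem \ref{thm:ItoformulaforgttXt}, discarding the martingale parts, compensating the Poisson integral, re-deriving the same three-way jump split from Remark \ref{rem:behaviourofgt}, and using the boundary assumption on $F$ to rule out a local-time contribution when $\sigma>0$ --- is precisely those standard arguments written out.
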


Recall from Remark \ref{rem:behaviourofgt} that the behaviour of $g_t$ (and then $U_t$) can be determined from the excursions of $X$ away from zero. The following theorem provides a formula to calculate an integral involving the process $\{(U_t, X_t), t\geq 0 \}$ with respect to time in terms of the excursions of $X$ above and below zero.

\begin{thm}
\label{thm:integralofgr} 
Let $q\geq 0$ and $X$ be a spectrally negative L\'evy process and $K:E\mapsto \R$ be a left-continuous function in each argument. Assume that there exists a non-negative function $C:\R_+ \times \R \mapsto \R$ such that $u\mapsto C(u,x)$ is a monotone function for all $x\in \R$, $|K(u,x)|\leq C(u,x)$ and $\E_{u,x}\left( \int_0^{\infty} e^{-qr } C(U_r,X_r+y) \dd r\right)<\infty$ for all $(u,x)\in E$ and $y\in \R$. Then we have, for any $(u,x)\in E$, that 
\begin{align}
\label{eq:calucationofintegraluptoinfinity2ndformula}
\E_{u,x}\left(\int_0^{\infty}e^{-qr} K(U_r,X_r) \dd r \right)&=
K^+(u,x)+\int_{-\infty}^0 K(0,y) \left[\Phi'(q)e^{-\Phi(q)(y-x)} -W^{(q)}(x-y)\right] \dd y \nonumber\\
&\qquad+ [\Phi'(q)e^{\Phi(q)x}-W^{(q)}(x)]\lim_{\varepsilon \downarrow 0 } \frac{K^+(0,\varepsilon)}{ W^{(q)}(\varepsilon)},
\end{align}  
where $K^+$ is given by
\begin{align*}
K^+(u,x)=\displaystyle{\E_x\left( \int_0^{\tau_0^-}e^{-qr}K(u+r,X_r)\dd r \right)}, \qquad (u,x)\in E.
\end{align*}
In particular, when $u=x=0$ we have that 
\begin{align*}
\E\left(\int_0^{\infty}e^{-qr} K(U_r,X_r) \dd r \right)&=\int_{-\infty}^{0}K(0,y)\left[\Phi'(q)e^{-\Phi(q) y} -W^{(q)}(-y)\right] \dd y +\Phi'(q) \lim_{\varepsilon \downarrow 0 } \frac{ K^+(0,\varepsilon)}{ W^{(q)}(\varepsilon)}.
\end{align*}

\end{thm}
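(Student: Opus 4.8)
The plan is to set $h(u,x):=\E_{u,x}\big(\int_0^\infty e^{-qr}K(U_r,X_r)\,\dd r\big)$ and exploit two elementary facts: that $U_r=0$ if and only if $X_r\le 0$, and that (immediately from \eqref{eq:initialmeausrePforU}) the $X$-marginal of $\P_{u,x}$ coincides with that of $\P_x$. Splitting the time integral according to the sign of $X_r$ gives $h(u,x)=A(x)+B(u,x)$, where $A(x)=\E_x\big(\int_0^\infty e^{-qr}K(0,X_r)\I_{\{X_r\le 0\}}\,\dd r\big)$ collects the $\{U=0\}$ part and $B(u,x)$ the $\{X>0\}$ part. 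The term $A(x)$ is an unkilled $q$-resolvent integral of $X$ over $(-\infty,0]$; using that the $q$-resolvent density of a spectrally negative L\'evy process is $\Phi'(q)e^{\Phi(q)(x-y)}-W^{(q)}(x-y)$ (a standard identity, obtainable by combining \eqref{eq:qpotentialdensitytkillingonexitinga} and \eqref{eq:qpotentialdensitytkillingonexiting0}), $A(x)$ is exactly the middle term of \eqref{eq:calucationofintegraluptoinfinity2ndformula}.

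For $B(u,x)$ I would decompose at $\tau_0^-$. Since $\sigma_0^-=\tau_0^-$ $\P_x$-a.s. for $x>0$, on $[0,\tau_0^-)$ one has $X_r>0$ and $U_r=u+r$, so this part contributes precisely $K^+(u,x)$. Applying the strong Markov property at $\tau_0^-$ and using that the excursion clock has then reset, the remainder equals $\E_x\big(e^{-q\tau_0^-}\I_{\{\tau_0^-<\infty\}}Q(X_{\tau_0^-})\big)$, where $Q(y):=\E_{0,y}\big(\int_0^\infty e^{-qr}K(U_r,X_r)\I_{\{X_r>0\}}\,\dd r\big)$ for $y\le 0$. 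Because no positive excursion accrues while $X\le 0$ and a spectrally negative process reaches $[0,\infty)$ continuously (so it restarts from $(0,0)$), the strong Markov property at the first passage above $0$ together with \eqref{eq:laplacetransformtau0} gives $Q(y)=e^{\Phi(q)y}Q(0)$. Feeding this back and invoking \eqref{eq:jointlaplacetransformtau0-Xtau0-} with $\beta=\Phi(q)$ yields $\E_x\big(e^{-q\tau_0^-}\I_{\{\tau_0^-<\infty\}}Q(X_{\tau_0^-})\big)=Q(0)\,e^{\Phi(q)x}\mathcal{I}^{(q,\Phi(q))}(x)$, which is the third term of \eqref{eq:calucationofintegraluptoinfinity2ndformula} once one identifies $Q(0)$ with $L:=\lim_{\varepsilon\downarrow 0}K^+(0,\varepsilon)/(\psi'(\Phi(q)+)W^{(q)}(\varepsilon))$.

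Everything therefore reduces to evaluating $Q(0)$, the discounted reward accrued over all positive excursions started afresh from the origin. Running the same decomposition from the state $(0,\varepsilon)$, $\varepsilon>0$, gives the identity $\widetilde Q(\varepsilon)=K^+(0,\varepsilon)+Q(0)\,e^{\Phi(q)\varepsilon}\mathcal{I}^{(q,\Phi(q))}(\varepsilon)$, with $\widetilde Q(\varepsilon)=\E_{0,\varepsilon}\big(\int_0^\infty e^{-qr}K(U_r,X_r)\I_{\{X_r>0\}}\,\dd r\big)$. In the finite variation case $0$ is irregular, $\tau_0^->0$ and $W^{(q)}(0)=1/d>0$, so taking $\varepsilon=0$ and solving the linear equation $Q(0)=K^+(0,0)+Q(0)\,\mathcal{I}^{(q,\Phi(q))}(0)$ gives $Q(0)=K^+(0,0)/(\psi'(\Phi(q)+)W^{(q)}(0))=L$ at once. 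In the infinite variation case $\tau_0^-=0$ and $K^+(0,0)=0$, so this equation degenerates; instead I rearrange the pre-limit identity into $Q(0)=\frac{K^+(0,\varepsilon)}{\psi'(\Phi(q)+)W^{(q)}(\varepsilon)}-\frac{\widetilde Q(\varepsilon)-Q(0)e^{\Phi(q)\varepsilon}}{\psi'(\Phi(q)+)W^{(q)}(\varepsilon)}$ and let $\varepsilon\downarrow 0$.

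The main obstacle is precisely this last limit: one must show $\widetilde Q(\varepsilon)-Q(0)e^{\Phi(q)\varepsilon}=o\big(W^{(q)}(\varepsilon)\big)$ as $\varepsilon\downarrow 0$, equivalently that the positive-excursion value started from height $\varepsilon$ agrees with the fresh-start value up to the overshoot factor $e^{\Phi(q)\varepsilon}$, with error negligible against $W^{(q)}(\varepsilon)$. Since $W^{(q)}(\varepsilon)\to 0$ at a genuinely sublinear rate, this cannot follow from crude continuity of $\widetilde Q$ and requires the fine behaviour of $W^{(q)}$ near $0$, most naturally through It\^o excursion theory for $X$ at level $0$: one writes $Q(0)$ as the integral, against inverse local time, of the excursion measure of the reward $\int_0^{\zeta}e^{-qs}K(s,\cdot)\,\dd s$ accrued over a generic positive excursion of length $\zeta$, and recognises $L$ as the standard representation of that excursion-measure quantity as $\lim_{\varepsilon\downarrow 0}\E_\varepsilon(\,\cdot\,)/W^{(q)}(\varepsilon)$. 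Throughout, the hypothesis $|K(u,x)|\le C(u,x)$ with $u\mapsto C(u,x)$ non-decreasing and $\E_x\big(\int_0^\infty e^{-qr}C(r,X_r)\,\dd r\big)<\infty$ is what legitimises Fubini, the strong Markov interchanges and the dominated passages to the limit, while the left-continuity of $K$ ensures the limiting integrands are the intended ones.
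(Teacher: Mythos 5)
Your architecture --- splitting the $q$-resolvent integral according to the sign of $X_r$, computing the $\{X_r\le 0\}$ part with the resolvent density $\Phi'(q)e^{\Phi(q)(x-y)}-W^{(q)}(x-y)$, and applying the strong Markov property at $\tau_0^-$ and (via upward creeping) at $\tau_0^+$ to reach $\E_{u,x}\left(\int_0^\infty e^{-qr}K(U_r,X_r)\,\dd r\right)=K^+(u,x)+A(x)+Q(0)e^{\Phi(q)x}\mathcal{I}^{(q,\Phi(q))}(x)$ --- is sound and genuinely different from the paper's route. But the proof is incomplete exactly at its decisive point, the identification $Q(0)=\lim_{\varepsilon\downarrow 0}K^+(0,\varepsilon)/(\psi'(\Phi(q)+)W^{(q)}(\varepsilon))$, and the reduction you offer there is circular rather than a proof. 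Indeed, from your own pre-limit identity $\widetilde Q(\varepsilon)=K^+(0,\varepsilon)+Q(0)e^{\Phi(q)\varepsilon}\mathcal{I}^{(q,\Phi(q))}(\varepsilon)$ together with $\mathcal{I}^{(q,\Phi(q))}(\varepsilon)=1-\psi'(\Phi(q)+)e^{-\Phi(q)\varepsilon}W^{(q)}(\varepsilon)$, one gets the exact algebraic identity $\widetilde Q(\varepsilon)-Q(0)e^{\Phi(q)\varepsilon}=K^+(0,\varepsilon)-Q(0)\psi'(\Phi(q)+)W^{(q)}(\varepsilon)$. Hence the estimate you flag as the main obstacle, namely $\widetilde Q(\varepsilon)-Q(0)e^{\Phi(q)\varepsilon}=o\bigl(W^{(q)}(\varepsilon)\bigr)$, is not an auxiliary continuity fact: it is word for word the statement that $K^+(0,\varepsilon)/(\psi'(\Phi(q)+)W^{(q)}(\varepsilon))$ converges to $Q(0)$, i.e.\ the conclusion itself, including the existence of the limit, which is part of what Theorem \ref{thm:integralofgr} asserts. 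In the infinite variation case your argument therefore only shows that the value function has the claimed form with an unidentified constant $Q(0)$ in the third term; the excursion-theoretic representation you gesture at is precisely the missing content and is never carried out. (Even in the finite variation case there is a residual gap: matching $Q(0)=K^+(0,0)/(\psi'(\Phi(q)+)W^{(q)}(0))$ with the stated limit needs $K^+(0,\varepsilon)\to K^+(0,0)$ as $\varepsilon\downarrow 0$, a right-limit in the space variable, whereas $K$ is only assumed left-continuous, so a short separate argument is required.)

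The paper never needs such an a priori estimate, which is why its proof is longer but closes. For each fixed $\varepsilon>0$ it computes exactly $\E_{u,x}\left(\int_0^\infty e^{-qr}K(U_{\varepsilon,r},X^{(\varepsilon)}_r)\,\dd r\right)$ for the perturbed process $X^{(\varepsilon)}$, by conditioning on the downcrossing count $M^{(\varepsilon)}$ whose law at an exponential time is given explicitly in Lemma \ref{lemma:probabilityfuncMepsilon}; summing the resulting geometric series makes the normalisation $1/(1-\mathcal{I}^{(q,\Phi(q))}(\varepsilon))=e^{\Phi(q)\varepsilon}/(\psi'(\Phi(q)+)W^{(q)}(\varepsilon))$ multiplying $K^+(0,\varepsilon)$ appear algebraically, for every $\varepsilon$. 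Convergence of the left-hand side as $\varepsilon\downarrow 0$ is then automatic by dominated convergence --- this is where $U_{\varepsilon,r}\uparrow U_r$, $X^{(\varepsilon)}_r\uparrow X_r$ and the left-continuity and domination hypotheses on $K$ enter --- and the companion term involving $K^-$ is evaluated in closed form via the martingale $e^{-q(t\wedge\tau_0^-)}W^{(q)}(X_{t\wedge\tau_0^-})$ and the change of measure \eqref{eq:exponentialchangeofmeasure}; the existence of your limit $L$ then falls out as a difference of convergent quantities instead of being assumed. To complete your proposal you would have to supply an equivalent input by hand: either prove the excursion-measure identity you allude to, or derive a pre-limit identity of the paper's type.
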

\begin{rem}
From the proof of Theorem \ref{thm:integralofgr}, we can find an alternative representation for formula \eqref{eq:calucationofintegraluptoinfinity2ndformula} as a limit in terms of excursions of $X$ above and below zero divided by a normalisation term. Indeed, for $(u,x)\in E$,
\begin{align*}
\E_{u,x}&\left(\int_0^{\infty}e^{-qr} K(U_r,X_r) \dd r \right)\\
&=K^+(u,x)+ \lim_{\varepsilon \downarrow 0 } \E_{x}\left(  \I_{\{ \tau_0^-<\infty\}} e^{-q \tau_0^-} K^-(X_{\tau_0^-}-\varepsilon) \right) \nonumber\\
&\qquad+ \left[\Phi'(q)e^{\Phi(q) x}- W^{(q)}(x) \right]  \lim_{\varepsilon \downarrow 0 }\frac{ \E_{\varepsilon}\left(  \I_{\{ \tau_0^-<\infty\}} e^{-q \tau_0^-} K^-(X_{\tau_0^-}-\varepsilon) \right)+K^+(0,\varepsilon) }{ W^{(q)}(\varepsilon)},
\end{align*}
where $K^-$ is given by
\begin{align}
\label{eq:definitionofK-}
K^-(x)=\displaystyle{\E_x\left( \int_0^{\tau_0^+}e^{-qr}K(0,X_r)\dd r \right)},
\end{align}
for all $x\in \R$.
\end{rem}

\subsection{Applications of Theorem \ref{thm:integralofgr}}
In this section, we consider applications of Theorem \ref{thm:integralofgr}. We first calculate the joint Laplace transform of $(U_{\e_q},X_{\e_q})$ where $\e_q$ is an exponential time with parameter $q>0$ independent of $X$.
\begin{cor}
Let $X$ be a spectrally negative L\'evy process. Let $q>0$ and $\alpha \in \R$, $\beta \geq 0$ such that $q>\psi(\beta) \vee (\psi(\beta)-\alpha)$. We have that for all $(u,x)\in E$,
\begin{align}
\label{eq:jointLaplaceUeqXeq}
\E_{u,x}&\left( e^{-\alpha U_{\e_q}+\beta X_{\e_q} } \right)\nonumber\\
&=\frac{qe^{\beta x}}{q-\psi(\beta)}+e^{\Phi(q) x }\Phi'(q)\left[   \frac{q}{\Phi(q+\alpha)-\beta}-\frac{q}{\Phi(q)-\beta}\right]+e^{\beta x}q \int_0^{x} e^{-\beta y}  [W^{(q)}(y)-e^{-\alpha u} W^{(q+\alpha)}(y) ]\dd y \nonumber\\
&\qquad + \frac{q}{\Phi(q+\alpha)-\beta} \left[e^{-\alpha u} W^{(q+\alpha)}(x)-W^{(q)}(x)\right].
\end{align}
\end{cor}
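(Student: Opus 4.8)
The plan is to apply Theorem \ref{thm:integralofgr} directly with the specific choice $K(u,x) = q e^{-\alpha u + \beta x}$ and exploit the fact that integrating against $q e^{-qr}$ computes the expectation of a functional at an independent exponential time $\e_q$. Indeed, for any bounded measurable $H$ we have $\E_{u,x}(H(U_{\e_q}, X_{\e_q})) = \E_{u,x}(\int_0^\infty q e^{-qr} H(U_r, X_r)\,\dd r)$, so taking $H(u,x) = e^{-\alpha u + \beta x}$ and absorbing the factor $q$ into $K$ puts the left-hand side of \eqref{eq:jointLaplaceUeqXeq} exactly in the form handled by \eqref{eq:calucationofintegraluptoinfinity2ndformula}. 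First I would verify the hypotheses of Theorem \ref{thm:integralofgr}: since $K$ factorises and $u \mapsto e^{-\alpha u}$ need not be monotone when $\alpha < 0$, I would set the dominating function $C(u,x) = q e^{\beta x}(1 + e^{-\alpha u})$ (or $q e^{-\alpha u + \beta x}$ when $\alpha \ge 0$), which is non-decreasing in $u$, and check the integrability $\E_x(\int_0^\infty e^{-qr} C(r, X_r)\,\dd r) < \infty$; this reduces to $\E_x(\int_0^\infty e^{-(q+\alpha \wedge 0)r + \beta X_r}\,\dd r) < \infty$, which holds precisely under the stated condition $q > \psi(\beta) \vee (\psi(\beta) - \alpha)$ by the exponential-change-of-measure martingale \eqref{eq:exponentialchangeofmeasure} and the finiteness of the associated resolvent.

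The core computation is to evaluate the quantity $K^+(u,x) = \E_x(\int_0^{\tau_0^-} e^{-qr} K(u+r, X_r)\,\dd r)$ for this choice of $K$. Writing $K(u+r, X_r) = q e^{-\alpha u} e^{-\alpha r + \beta X_r}$ pulls out the factor $q e^{-\alpha u}$, leaving $\E_x(\int_0^{\tau_0^-} e^{-(q+\alpha) r + \beta X_r}\,\dd r)$. This is a two-parameter resolvent of $X$ killed on exiting $[0,\infty)$, tilted by $e^{\beta X_r}$. I would compute it by first applying the exponential change of measure \eqref{eq:exponentialchangeofmeasure} at rate $\beta$ to turn $e^{\beta X_r}$ into a Radon--Nikodym density (shifting $q+\alpha$ to $q + \alpha - \psi(\beta)$ and the process to $\P^\beta$), and then integrating the $(q+\alpha-\psi(\beta))$-potential density \eqref{eq:qpotentialdensitytkillingonexiting0} of $X$ under $\P^\beta$ against $e^{\beta y}$ over $y \in [0,\infty)$. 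Using $W^{(q)}(x) = e^{\Phi(q)x} W_{\Phi(q)}(x)$ and the relation $\Phi_\beta(\cdot) = \Phi(\cdot + \psi(\beta)) - \beta$ to translate scale functions back to the original measure, this Laplace-type integral should collapse to a closed form involving $W^{(q+\alpha)}(x)$ and the exponential terms $e^{\beta x}$ and $e^{\Phi(q+\alpha) x}$; I expect it to produce exactly the terms $q e^{-\alpha u}[\,e^{\beta x}\int_0^x e^{-\beta y} W^{(q+\alpha)}(y)\,\dd y - W^{(q+\alpha)}(x)/(\Phi(q+\alpha)-\beta)\,]$ up to the normalisation bookkeeping, i.e.\ a structure mirroring $\mathcal{I}^{(q,\beta)}$ in \eqref{eq:functionI}.

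It then remains to assemble the three pieces of \eqref{eq:calucationofintegraluptoinfinity2ndformula}. The boundary integral $\int_{-\infty}^0 K(0,y)[e^{\Phi(q)(x-y)}\Phi'(q) - W^{(q)}(x-y)]\,\dd y$ becomes $q\int_{-\infty}^0 e^{\beta y}[\cdots]\,\dd y$ (note $K(0,y) = q e^{\beta y}$, with $u = 0$ forcing $x = y \le 0$ on the relevant boundary), a convergent integral over the negative half-line yielding terms in $e^{\Phi(q)x}$ and $e^{\beta x}$; convergence here uses $\beta \ge 0$ and $\Phi(q) > 0$. The final limit term requires evaluating $\lim_{\varepsilon \downarrow 0} K^+(0,\varepsilon)/(\psi'(\Phi(q)+) W^{(q)}(\varepsilon))$: since $W^{(q)}(\varepsilon) \to W^{(q)}(0)$ (which is $0$ in the infinite-variation case and $1/d$ in the finite-variation case) and $K^+(0,\varepsilon) \to 0$ as $\varepsilon \downarrow 0$ in the infinite-variation case, this is the delicate step. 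I expect the main obstacle to be controlling this ratio uniformly across the finite- and infinite-variation regimes and confirming that the $K^+(0,\varepsilon)$ expansion has the same leading-order behaviour as $W^{(q)}(\varepsilon)$, so that the limit is finite and contributes the term proportional to $q/(\Phi(q+\alpha)-\beta) - q/(\Phi(q)-\beta)$. The remaining work is then algebraic consolidation: collecting the $e^{\beta x}$, $e^{\Phi(q)x}\Phi'(q)$, and $W^{(q)}, W^{(q+\alpha)}$ terms, using $\psi'(\Phi(q)+)\Phi'(q) = 1$ and the $\mathcal{I}^{(q,\beta)}$ identity to simplify, and matching the result to the right-hand side of \eqref{eq:jointLaplaceUeqXeq}, with the leading term $qe^{\beta x}/(q-\psi(\beta))$ emerging from the full-space resolvent $\E_x(\int_0^\infty e^{-qr+\beta X_r}\,\dd r)$ that the excursion decomposition reconstructs.
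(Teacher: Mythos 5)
Your proposal is correct and follows essentially the same route as the paper's proof: the paper also applies Theorem \ref{thm:integralofgr} to the exponential kernel (with dominating function $e^{-(\alpha\wedge 0)u+\beta x}$, which is the clean way to get monotonicity in $u$ for both signs of $\alpha$), evaluates $K^+(u,x)$ as $e^{-\alpha u}\bigl[W^{(q+\alpha)}(x)/(\Phi(q+\alpha)-\beta)-e^{\beta x}\int_0^x e^{-\beta y}W^{(q+\alpha)}(y)\,\dd y\bigr]$ directly via Fubini and the $(q+\alpha)$-potential density \eqref{eq:qpotentialdensitytkillingonexiting0} (your change-of-measure detour at rate $\beta$ lands on the same formula), computes the boundary integral via \eqref{eq:laplacetransformofWq}, and multiplies by $q$ at the end. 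The one step you flag but leave open --- that $K^+(0,\varepsilon)$ has the same leading-order behaviour as $W^{(q)}(\varepsilon)$ in both variation regimes --- is settled in the paper by showing $\lim_{\varepsilon\downarrow 0}W^{(q+\alpha)}(\varepsilon)/W^{(q)}(\varepsilon)=1$ using the series representation $W^{(q)}=\sum_{k\geq 0}q^kW^{*(k+1)}$ and the bound $W^{*(k+1)}(x)\leq x^kW(x)^{k+1}/k!$, whence the limit term contributes $\Phi'(q)/(\Phi(q+\alpha)-\beta)$ exactly as you predict.
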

\begin{proof}
Consider the function $K(u,x)=e^{-\alpha u +\beta x}$ for all $(u,x)\in E$. We have that $K$ is a continuous function and $K(u,x)\leq e^{-(\alpha \wedge 0)  u+\beta x}$ for all $(u,x)\in E$. Take $q>0$ such that $q> \psi(\beta)\vee (\psi(\beta)-\alpha)=\psi(\beta)-(\alpha \wedge 0)$, we see that 

\begin{align*}
\E_x\left(\int_0^{\infty} e^{-qr} e^{-(\alpha \wedge 0) (u+r) +\beta X_r} \dd r \right)&= e^{\beta x-(\alpha \wedge 0)u}\int_0^{\infty}e^{-(q+(\alpha \wedge 0)-\psi(\beta))r} \dd r=\frac{e^{\beta x-(\alpha \wedge 0)u}}{ q+(\alpha \wedge 0)-\psi(\beta)}<\infty,
\end{align*}
for all $u\geq 0$ and $x\in \R$. Then for all $u> 0$ and $x>  0$ we have, by Fubini's theorem and from equation \eqref{eq:qpotentialdensitytkillingonexiting0}, that 
\begin{align*}
K^+(u,x)&=\E_x\left( \int_0^{\tau_0^-}e^{-qr}e^{-\alpha (u+r)+\beta X_r}\dd r \right)\\
&=e^{-\alpha u}  \int_{(0,\infty)}   e^{\beta y}\int_0^{\infty} e^{-(q+\alpha) r} \P_x(X_r\in \dd y ,r<\tau_0^- )\dd r  \\
&=e^{-\alpha u}  \int_{0}^{\infty}   e^{\beta y}\left[ e^{-\Phi(q+\alpha) y}W^{(q+\alpha)}(x)-W^{(q+\alpha)}(x-y) \right] \dd y \\
&=  \frac{e^{-\alpha u} W^{(q+\alpha)}(x)}{\Phi(q+\alpha)-\beta}-e^{-\alpha u}e^{\beta x} \int_0^{x} e^{-\beta y}  W^{(q+\alpha)}(y) \dd y.
\end{align*}
Similarly, we calculate for any $x\in \R$,
\begin{align*}
\int_{-\infty}^{0}e^{\beta y}\left[e^{\Phi(q) (x-y)}\Phi'(q) -W^{(q)}(x-y)\right] \dd y
& = \Phi'(q)e^{\Phi(q)x} \int_0^{\infty}e^{-(\beta -\Phi(q))y }\dd y-e^{\beta x}\int_x^{\infty} e^{-\beta y}W^{(q)}(y)\dd y \\
& = \frac{\Phi'(q)e^{\Phi(q)x}}{\beta -\Phi(q)}-\frac{e^{\beta x}}{\psi(\beta)-q}+e^{\beta x}\int_0^x e^{-\beta y}W^{(q)}(y)\dd y,
\end{align*}
where the last equality follows from equation \eqref{eq:laplacetransformofWq} and the last integral is understood like $0$ when $x<0$. 
%
%
%
Then from \eqref{eq:calucationofintegraluptoinfinity2ndformula} we get that for all $(u,x)\in E$,

\begin{align*}
&\E_{u,x}\left(\int_0^{\infty}e^{-qr} e^{-\alpha U_{r}+\beta X_{r} }\dd r \right)\\
&=\frac{e^{-\alpha u} W^{(q+\alpha)}(x)}{\Phi(q+\alpha)-\beta}-e^{-\alpha u}e^{\beta x} \int_0^{x} e^{-\beta y}  W^{(q+\alpha)}(y) \dd y +\frac{\Phi'(q)e^{\Phi(q)x}}{\beta -\Phi(q)}-\frac{e^{\beta x}}{\psi(\beta)-q}+e^{\beta x}\int_0^x e^{-\beta y}W^{(q)}(y)\dd y \\
&\qquad +e^{\Phi(q)x}\mathcal{I}^{(q,\Phi(q))}(x)\lim_{\varepsilon \downarrow 0 } \frac{ 1}{\psi'(\Phi(q)+) W^{(q)}(\varepsilon)} \left[\frac{ W^{(q+\alpha)}(\varepsilon)}{\Phi(q+\alpha)-\beta}-e^{\beta \varepsilon} \int_0^{\varepsilon} e^{-\beta y}  W^{(q+\alpha)}(y) \dd y \right]\\
&=\frac{e^{-\alpha u} W^{(q+\alpha)}(x)}{\Phi(q+\alpha)-\beta}-e^{-\alpha u}e^{\beta x} \int_0^{x} e^{-\beta y}  W^{(q+\alpha)}(y) \dd y +\frac{\Phi'(q)e^{\Phi(q)x}}{\beta -\Phi(q)}-\frac{e^{\beta x}}{\psi(\beta)-q}+e^{\beta x}\int_0^x e^{-\beta y}W^{(q)}(y)\dd y \\
&\qquad +e^{\Phi(q)x} \left[ 1-\psi'(\Phi(q)+) e^{-\Phi(q) x}W^{(q)}(x) \right] \frac{\Phi'(q)}{\Phi(q+\alpha)-\beta},
\end{align*}
%
%
%
where in the last equality we used the fact that $\Phi'(q)=1/\psi'(\Phi(q)+)$, $W^{(q)}{(x)}$ is non-negative and strictly increasing on $[0,\infty)$, for all $q\geq 0$, and that
\begin{align*}
\lim_{\varepsilon \downarrow 0} \frac{W^{(q+\alpha)}(\varepsilon)}{W^{(q)}(\varepsilon)}=1.
\end{align*} 
The latter fact follows from the representation $W^{(q)}(x)=\sum_{k=0}^{\infty}q^k W^{*(k+1)}(x)$ and the estimate $W^{*(k+1)}(x)\leq x^k/k! W(x)^{k+1}$ (see equations (8.28) and (8.29) in \cite{kyprianou2014fluctuations}, pp 241-242). Rearranging the terms and using that
\begin{align*}
\E_{u,x}\left( e^{-\alpha U_{\e_q}+\beta X_{\e_q} } \right)=q\E_{u,x}\left(\int_0^{\infty}e^{-qr} e^{-\alpha U_{r}+\beta X_{r} }\dd r \right),
\end{align*}
for all $(u,x)\in E$, we obtain the desired result.
\end{proof}
\begin{rem}
\label{rem:LaplaceofXeq}
Note that from formula \eqref{eq:jointLaplaceUeqXeq}, we can recover some known expressions for spectrally negative L\'evy processes. If we take $\alpha=0$, we obtain for all $\beta \geq 0$, $q>\psi(\beta)\vee 0$ and $x\in \R$,
\begin{align*}
\E_x(e^{\beta X_{\e_q}})=\frac{q e^{\beta x}}{q-\psi(\beta)}.
\end{align*}
On the other hand, for any $\theta\geq 0$, $q\geq 0$ and $x\in \R$ we have that 
\begin{align*}
\E_x(e^{-\theta g_{\e_q}})=\int_0^{\infty}qe^{-qt} \E_x(e^{-\theta g_t})\dd t=\int_0^{\infty}qe^{-(q+\theta)t} \E_x(e^{\theta U_t})\dd t =\frac{q}{q+\theta} \E_x(e^{\theta U_{\e_{q+\theta}}}),
\end{align*}
where $\e_{q+\theta}$ is an exponential random variable with parameter $q+\theta$. The result coincides with the one found in \cite{baurdoux2009last} (see Theorem 2).
\end{rem}

Let $q>0$, we consider the $q$-potential measure of $(U,X)$ given by 
\begin{align*}
\int_0^{\infty} e^{-qr }\P_{u,x}(U_r \in \dd v, X_r \in \dd y) \dd r
\end{align*}
for $(u,x),(v,y)\in E$. From the fact $U_t=0$ if and only if $X_t\leq 0$, for any $t>0$, we have that for $(u,x)\in E$ and $y\leq 0$,
\begin{align*}
\int_0^{\infty} e^{-qr }\P_{u,x}(U_r =0, X_r \in \dd y) \dd r=\int_0^{\infty} e^{-qr }\P_{x}( X_r \in \dd y) \dd r.
\end{align*}
In the next corollary, we find an expression for a density when $v,x>0$.
\begin{cor}
\label{cor:qpotentialmeasureUX}
Let $X$ be a spectrally negative L\'evy process and $q>0$. The $q$-potential measure of $(U,X)$ has a density given by
\begin{align}
\label{eq:qpotentialmeasureUX}
\int_0^{\infty} e^{-qr }\P_{u,x}(U_r \in \dd v, X_r \in \dd y) \dd r
&=e^{-q(v-u)}\P_x(X_{v-u} \in \dd y, v-u<\tau_0^-)  \I_{\{v> u \}} \dd v\nonumber\\
&\qquad +\left[e^{\Phi(q) x}\Phi'(q)-W^{(q)}(x) \right]  \frac{y}{v} e^{-qv}\P(X_{v} \in \dd y) \dd v 
\end{align}
for all $(u,x)\in E$ and $v,y>0$. In particular, when $u=x=0$ we have that 
\begin{align*}
\int_0^{\infty} e^{-qr }\P(U_r \in \dd v, X_r \in \dd y) \dd r
&=\Phi'(q)   \frac{y}{v} e^{-qv}\P(X_{v} \in \dd y) \dd v.
\end{align*}
\end{cor}

\begin{proof}
Let $0<u_1<u_2$ and $0<x_1<x_2$ and define the sets $A=(u_1,u_2]$ and $Y=(x_1,x_2]$. Then the function $K(u,x)=\I_{\{ u\in A, x\in Y \}}$ is left-continuous and  bounded from above by $C(x)=\I_{\{x\in Y\}}$. Moreover, we have that for  $q>0$ and $x\in \R$,

\begin{align*}
\E_x\left(\int_0^{\infty}e^{-qr} \I_{\{X_r \in Y \}} \dd r \right)<\infty.
\end{align*}
First, we calculate for all $u,x>0$ such that $u<u_1$, 
\begin{align*}
K^+(u,x)=\E_{u,x}\left(\int_0^{\tau_0^-} e^{-qr}\I_{\{ U_r \in A, X_r  \in Y\}} \dd r \right)=\int_{A} \int_{Y} e^{-q(r-u)}\P_x(X_{r-u} \in  \dd y, r-u<\tau_0^-) \dd r.
\end{align*}
For every $x\leq 0$ we have that 
\begin{align*}
K^-(x)=	\E_{u,x}\left(\int_0^{\tau_0^+} e^{-qr}\I_{\{ U_r \in A, X_r \in Y\}} \dd r \right)=0.
\end{align*}
Hence, for all $(u,x)\in E$ we obtain that
\begin{align*}
\E_{u,x}&\left(\int_0^{\infty} e^{-qr} \I_{\{U_r \in A, X_r \in Y \}} \dd r \right)\\
&=
\int_{A} \int_Y  e^{-q(r-u)}\P_x(X_{r-u} \in \dd y, r-u<\tau_0^-) \dd r\\
&\qquad + e^{\Phi(q) x}\left[1-\psi'(\Phi(q)+) e^{-\Phi(q) x}W^{(q)}(x) \right] \lim_{\varepsilon \downarrow 0 }  \int_{A} \int_Y  \frac{e^{-qr}\P_{\varepsilon}(X_{r} \in \dd y, r<\tau_0^-)  }{ \psi'(\Phi(q)+) W^{(q)}(\varepsilon)} \dd r.
\end{align*}
We calculate the limit on the right-hand side of the equation above. Denote $\P_{\varepsilon}^{\uparrow}$ as the law of $X$ starting from $\varepsilon$ conditioned to stay positive. We have, for all $x\in \R$ and $y>0$, that 
\begin{align*}
\lim_{\varepsilon \downarrow 0 }  \int_{A} \int_Y  \frac{e^{-qr}\P_{\varepsilon}(X_{r} \in \dd y, r<\tau_0^-)  }{ \psi'(\Phi(q)+) W^{(q)}(\varepsilon)} \dd r
&=\lim_{\varepsilon \downarrow 0 } \frac{W(\varepsilon)}{ \psi'(\Phi(q)+)  W^{(q)}(\varepsilon)}   \int_{A} \int_Y  \frac{e^{-qr}\P^{\uparrow}_{\varepsilon}(X_{r} \in \dd y)  }{  W(y)} \dd r \\
&=\frac{1}{ \psi'(\Phi(q)+) }   \int_{A} \int_Y  \frac{e^{-qr}\P^{\uparrow}(X_{r} \in \dd y)  }{  W(y)} \dd r,
\end{align*} 
where the first equality follows from the definition of $\P^{\uparrow}$ (see e.g. \cite{bertoin1998levy} section VII.3 equation (6)) and the last equality follows since $\lim_{\varepsilon \downarrow 0} W(\varepsilon)/W^{(q)}(\varepsilon)=1$ and $\P_{\varepsilon}^{\uparrow}$ converges to $\P^{\uparrow}$ in the sense of finite-dimensional distributions (see Proposition VII.3.14 in \cite{bertoin1998levy}). Moreover, we have for all $y,r>0$ that $\P^{\uparrow}(X_r\in \dd y)= yW(y)\P(X_r \in \dd y)/r$ (see Corollary VII.3.16 in \cite{bertoin1998levy}).
Therefore, we obtain for all $(u,x)\in E$ that
\begin{align*}
\E_{u,x}\left(\int_0^{\infty} e^{-qr} \I_{\{U_r \in A, X_r \in Y \}} \dd r \right)&=
\int_{A} \int_Y e^{-q(r-u)}\P_x(X_{r-u} \in \dd y, r-u<\tau_0^-) \dd r\\
&\qquad + \left[\Phi'(q)  e^{\Phi(q) x}-W^{(q)}(x) \right]  \int_{A} \int_Y   \frac{y}{r}e^{-qr} \P(X_{r} \in \dd y) \dd r,
\end{align*}
where we also used the fact that $\Phi'(q)=1/\psi'(\Phi(q)+)$. The proof is now complete.

\end{proof}
\begin{rem}
\cite{Binghamfluctuationcontinuous} showed that the $q$-potential measure of $X$ has a density that is absolutely continuous with respect to the Lebesgue measure. This can be demonstrated by moving the killing barrier on the $q$-potential measure killed on entering the set $(-\infty,0]$ (see \eqref{eq:qpotentialdensitytkillingonexiting0}) and taking limits. Alternatively, it can be deduced by taking limits on \eqref{eq:calucationofintegraluptoinfinity2ndformula}. Moreover, Corollary \ref{cor:qpotentialmeasureUX} provides an alternative method for finding the density mentioned above. For this, we use Kendall's identity (see e.g. \cite{bertoin1998levy}, Corollary VII.3) given by
\begin{align}
\label{eq:Kendallsidentity}
r\P(\tau_z^+ \in \dd r) \dd z=z\P(X_r \in \dd z) \dd r
\end{align}
 for all $r,z \geq 0$. Indeed, let $u,y>0$ and $x\in \R$, integrating \eqref{eq:qpotentialmeasureUX} with respect to the variable $v$, we obtain that
\begin{align*}
\int_0^{\infty} e^{-qr} &\P_x(X_r \in \dd y) \dd r\\
&=\int_{(0,\infty)} \int_0^{\infty} e^{-qr }\P_{u,x}(U_r \in \dd v, X_r \in \dd y) \dd r \\
&=\int_0^{\infty} e^{-qv}\P_x(X_{v} \in \dd y, v<\tau_0^-) \dd v   + \int_0^{\infty}\left[e^{\Phi(q) x}\Phi'(q)-W^{(q)}(x) \right]  \frac{y}{v} e^{-qv}\P(X_{v} \in \dd y) \dd v \\
&=[e^{-\Phi(q) y}W^{(q)}(x) -W^{(q)}(x-y)] \dd y +  \left[e^{\Phi(q) x}\Phi'(q)-W^{(q)}(x) \right]  \int_0^{\infty} e^{-q v} \P(\tau_y^+ \in \dd v)\dd y ,
\end{align*}
where the last equality follows from \eqref{eq:qpotentialdensitytkillingonexiting0} and \eqref{eq:Kendallsidentity}. Hence, using the formula for the Laplace transform of $\tau_y^+$ (see equation \eqref{eq:laplacetransformtau0}) we have that 
\begin{align*}
\int_0^{\infty} e^{-qr} \P_x(X_r \in \dd y) \dd r=\left( e^{\Phi(q) (x-y)}\Phi'(q) -W^{(q)}(x-y)\right) \dd y.
\end{align*}
%
%
\end{rem}

\section{Applications to optimal stopping/prediction problems}
\label{sec:applicationtoOSandOPproblems}
\subsection{Optimal stopping problems}
\label{subsec:optimalstoppingproblems}
This section uses the results developed in the previous sections to solve a general optimal stopping problem. For the sake of simplicity, we will assume that $X$ is a spectrally negative process with a Gaussian component. That is, we assume that $\sigma>0$. We take $r\geq 0$ and let $G$ be a continuous function on $E$ such that\begin{align}
\label{eq:OSintegrabilitycontidion}
\E_{u,x}\left(\int_0^{\infty} e^{-rs}|G(U_s,X_s)|\dd s  \right)<\infty
\end{align}
for all $(u,x)\in E$. We further assume that there exists a value $x_G<0$ such that: $G(0,x)<0$ for all $x<x_G$ with $\lim_{x\rightarrow -\infty}G(0,x)<0$, and $G(u,x)\geq 0$ for all $(u,x)\in E$ such that $x\geq x_G$. We also assume that the function 
\begin{align*}
K^+(u,x):=\E_x\left(\int_0^{\tau_0^-} e^{-rs}G(u+s,X_s)\dd s \right)
\end{align*}
is $C^{1,2}$ on $[0,\infty)\times[0,\infty)$.\\

We consider the following optimal stopping problem 
\begin{align}
\label{eq:optimalstoppingproblem}
V(u,x)=\sup_{\tau \in \mathcal{T}} \E_{x,u}\left(\int_0^{\tau}e^{-rs}G(U_s,X_s)\dd s \right),
\end{align}
where $\mathcal{T}$ is the set of all stopping times of $X$. Note that our assumptions suggest that it is never optimal to stop when $X$ is taking positive values, and since $G$ is negative for $x<x_G$, we should stop as soon as $X$ is below a value $z^*<x_G$, for $|z^*|$ sufficiently large. The following theorem confirms that notion.
\begin{thm}
\label{thm:solutiontooptimalstopping}
Under the conditions stated above, we have that an optimal stopping time for \eqref{eq:optimalstoppingproblem} is given by 
\begin{align*}
\tau_{z^*}^-=\inf\{t>0: X_t\leq z^* \},
\end{align*}
where $z^* \in (-\infty,0)$ is characterised as the unique solution for $z$ in $(-\infty,0)$ to the equation 
\begin{align*}
\int_{(0,\infty)}\int_0^{\infty}  G(v,y)  \frac{y}{v}e^{-rv}\P(X_{v} \in \dd y) \dd v+\int_{z}^0 G(0,y) e^{-\Phi(r) y}\dd y=0.
\end{align*}
We have that $z^*<x_G$, where we recall that $x_G=\inf\{x\in \R: G(0,x) \geq 0 \}<0$. Moreover, the value function is given by 
\begin{align*}
V(u,x)= K^+(u,x)  -W^{(r)}(x)\int_{(0,\infty)}\int_0^{\infty}  G(v,y)  \frac{y}{v}e^{-rv}\P(X_{v} \in \dd y) \dd v  
-\int_{z^*}^0 G(0,y)W^{(r)}(x-y) \dd y,
\end{align*}
for all $(u,x)\in E$. Furthermore, there is smooth fit at $z^*$, that is, $\frac{\partial}{\partial x} V(0,z^*+)=\frac{\partial}{\partial x} V(0,z^*-)$.
\end{thm}
We have the following remark regarding some of the assumptions in Theorem \ref{thm:solutiontooptimalstopping}.
\begin{rem}
\begin{enumerate}
\item[i)] Condition \eqref{eq:OSintegrabilitycontidion} ensures that the optimal stopping problem is well posed and can be relaxed without affecting our results. If the function $K$ is a bounded function, we see that \eqref{eq:OSintegrabilitycontidion} is satisfied whenever $r>0$. Moreover, if there exist values $\alpha>0$ and $\beta\geq 0$ such that $|G(u,x)|\leq \alpha e^{\beta x}$, then \eqref{eq:OSintegrabilitycontidion} holds whenever $r>\psi(\beta)$ (see Remark \ref{rem:LaplaceofXeq}).
\item[ii)] The condition imposed on $G$ concerning $x_G$ ensures that an optimal solution is given in terms of the constant barrier $z^*\in (-\infty,0)$. For instance, if $G(u,x)\geq 0$ for all $(u,x)\in E$, the stopping time $\tau\equiv \infty$ is always optimal. On the other hand, if we allow that $G(u,x)<0$, for some values $u>0$ and $x>0$, the optimal solution may be of the form $\tau_D=\inf \{t\geq 0: (U_t, X_t)\in D \}$, where $D\subset E$ and $D\cap [(0,\infty)\times (0,\infty)] \neq  \emptyset$, and a more careful analysis needs to be done (see, e.g., Section \ref{sec:optimalpredictionproblems}).
\end{enumerate}
\end{rem}

The proof of Theorem \ref{thm:solutiontooptimalstopping} relies on finding, with the help of the potential measure of $(U,X)$ given in Corollary \ref{cor:qpotentialmeasureUX}, a semi-explicit expression of the function $V_z(u,x)=\E_{u,x}\left(\int_0^{\tau_z^-} G(U_s,X_s)\dd s \right)$, for each $z\leq 0$. Then, due to the properties of the scale functions $W^{(r)}$ and applying the version of It\^o formula derived in Theorem \ref{thm:ItoformulaforgttXt}, we see that the two conditions given in Lemma \ref{lemma:verificationlemma} are satisfied. The reader should also note that formula \eqref{eq:calucationofintegraluptoinfinity2ndformula} helped prove that condition \eqref{eq:pastingatzeroItoformula} is satisfied for $V$. The proof is deferred to Section \ref{subsec:solutiontooptimalstopping}.\\

Motivated by the example below, we have the following proposition as an application of Theorem \ref{thm:solutiontooptimalstopping}. Its proof is relegated to the Section \ref{subsection:Proofofsolutionbankrupcy}. 
\begin{prop}
\label{prop:SolutiontocorporatebankrupcyOS}
Let $X$ be any spectrally negative L\'evy process with $\sigma>0$. Take $\beta \geq 0$, $K\in (0,1)$ and $r>\psi(1)+\beta$. We consider the optimal stopping problem 
\begin{align*}
V(u,x)
&=\sup_{\tau \in \mathcal{T}}\E_{u,x}\left( \int_0^{\tau } e^{-rs}[e^{X_t +\beta U_s}-K]\dd s  \right),\qquad (u,x)\in E. 
\end{align*} 
Then, the stopping time 
\begin{align*}
\tau_{z^*}^-=\inf\{t>0: X_t\leq z^* \}
\end{align*}
is an optimal stopping time, where $z^*$ is the unique solution on $(-\infty,0)$ to the equation 
\begin{align}
\label{eq:defaultlevel}
\frac{e^{-(\Phi(r)-1)z}}{\Phi(r)-1}-\frac{Ke^{-\Phi(r)z}}{\Phi(r)}+\frac{1}{\Phi(r-\beta)-1}-\frac{1}{\Phi(r)-1}=0.
\end{align}
Moreover, the value function takes the form:
\begin{align*}
V(u,x)&= e^{\beta u} \left[ \frac{W^{(r-\beta)}(x)}{\Phi(r-\beta)-1} -\int_0^x e^y W^{(r-\beta)}(x-y)\dd y \right] +K\int_0^xW^{(r)}(y)\dd y-\frac{K}{\Phi(r)}W^{(r)}(x)\\
&\qquad-W^{(r)}(x)\left[\frac{1}{\Phi(r-\beta)-1}-\frac{K}{\Phi(r)} \right]-\int_{z^*}^0 [e^{y}-K]W^{(r)}(x-y) \dd y
\end{align*}
for every $(u,x)\in E$.

\end{prop}
We have the following remark on the above proposition regarding some particular cases and how our results match the current literature.

\begin{rem}
\label{rem:PropexponentialOStimebankruptcy}
\begin{itemize}
\item[i)] Here, it can be checked that the value function satisfies the condition $\frac{\partial V}{\partial x }(u,0+)=\frac{\partial V}{\partial x} V(u,0-)$ is satisfied only for the case $u=0$.
\item[ii)]
Note that when $\beta=0$,  we see from \eqref{eq:defaultlevel} that the value $z^*$ takes the form 
\begin{align*}
z^*=\log\left( \frac{\Phi(r)-1}{\Phi(r)}K\right).
\end{align*}
Moreover, when $Y$ is a geometric Brownian motion with mean $m$ and volatility $\sigma>0$ (that is, $\mu=m-\sigma^2/2$ and $\Pi\equiv 0$), we recover the value of $z^*$, when $r>m$, found in \cite{leland1994corporate} (see also Section III.C in \cite{quah2013discounting}). Indeed, in this case we have that for any $q\geq 0$,
\begin{align*}
\Phi(q)=\frac{1}{\sigma^2 }\left( \sqrt{\mu^2+2q\sigma^2}-\mu \right).
\end{align*}
An easy calculation shows that 
\begin{align*}
z^*=\log\left( \frac{\xi(r)}{\xi(r)+1}\left(1-\frac{m}{r}\right)K\right),
\end{align*}
where for any $q\geq 0$, 
\begin{align*}
\xi(q)=\frac{1}{\sigma^2 }\left( \sqrt{\mu^2+2q\sigma^2}+\mu \right).
\end{align*}
 
\end{itemize}

\end{rem}
We then present a setting where the result above becomes relevant.

\begin{example}
\label{ex:whentosellastock}
Following the model of corporate bankruptcy in \cite{leland1994corporate} and \cite{manso2010performance} (see also Section III.C in \cite{quah2013discounting}), we consider that equity holders endogenously choose the bankruptcy time. Suppose that the performance of a firm\footnote{This could be any statistic measuring the firm's ability to pay its debt obligations in the future. For example, prices of stocks, financial ratios, or credit ratings.} at time $t\geq 0$, is given by $Y_t=\exp(X_t)$, where $X$ is a spectrally negative L\'evy process such that $\sigma>0$. The performance measure $Y_t$ is normalised such that the values above the level $1$ are considered a good company performance, whereas values below one indicate a negative rating. Then, we consider for $t\geq 0$, $V_t=t-\sup\{0\leq s\leq t: Y_s\leq 1 \}=t-U_t$, the length of time since the last time the company performed poorly. Large values of $V_t$ can be interpreted as the firm's financial stability.

Suppose that, until bankruptcy, the firm must pay a coupon rate $c(v,y)$ to debt holders and receive a payout rate $\delta(v,y)$ in terms of the performance $y$ and $v$, the current excursion above the level $1$. Then, the time of bankruptcy is determined by the optimal stopping problem  
\begin{align}
\label{eq:optimalstoppingproblemcorparebankrupcy}
V(u,x)
&=\sup_{\tau \in \mathcal{T}}\E_{u,x}\left( \int_0^{\tau } e^{-rs}[\delta(U_s,e^{X_t})-c(U_s,e^{X_t})]\dd s  \right),
\end{align}
where $r\geq 0$ is the risk-free interest rate. Note that if $\delta(U_s,e^{X_s})$ is lower than $c(U_s,e^{X_s})$, equity holders have a negative dividend rate. Then, the firm will keep operating with a negative dividend rate if the firm's prospects are good enough to compensate for the negative losses. Otherwise, the firm will stop operations, and bankruptcy will be declared.


We then see from Proposition \ref{prop:SolutiontocorporatebankrupcyOS} that, upon taking $\delta(v,y)=ye^{\beta v}$ and $c(v,y)=K$ in \eqref{eq:optimalstoppingproblemcorparebankrupcy}, with $K$, $\beta$ and $r$ as in Proposition \ref{prop:SolutiontocorporatebankrupcyOS}, we conclude that the bankruptcy of the company occurs when $Y_t=\exp(X_t)$ crosses below the level $e^{z^*}$. 
\end{example}

\subsection{Optimal prediction problems}
\label{sec:optimalpredictionproblems}
Let $X$ be a stochastic process with state space in $\R$ and let $\theta$ be a last passage time of $X$, that is, $\theta =\sup\{t\geq 0: X_t\in A \}$, where  $A\subset \R$. The recent literature has solved the problem of finding a stopping time approximating a specific last passage time. There are, for example, various papers in which the approximation is in $L_1$ sense. That is, the following optimal prediction problem is solved:
\begin{align}
\label{eq:L1optimalpredictionproblem}
\inf_{\tau \in \mathcal{T}} \E(|\tau-\theta|).
\end{align}
To mention a few: \cite{du2008predicting} predicted the last zero of a Brownian motion with drift in a finite horizon setting; \cite{duToit2008} predicted the time of the ultimate maximum at time $t=1$ for a Brownian motion with drift is attained; \cite{shiryaev2009} focused on the last time of the attainment of the ultimate maximum of a Brownian motion and proceeded to show that it is equivalent to predicting the last zero of the process in this setting; \cite{glover2013three} predicted the time in which a transient diffusion attains its ultimate minimum; \cite{glover2014optimal} predicted the last passage time of a level $z > 0$ for an arbitrary nonnegative time-homogeneous transient diffusion; \cite{baurdoux2014predicting} predicted the time at which a L\'evy process attains its ultimate supremum and \cite{baurdoux2016optimal} predicted when a positive self-similar Markov process attains its path-wise global supremum or infimum before hitting zero for the first time and \cite{baurdoux2018predicting} predicted the last zero of a spectrally negative L\'evy process.\\

From here onwards, consider $X$ to be a spectrally negative L\'evy process that drifts to infinity and let $g=\sup\{ t\geq 0: X_t\leq 0\}$, the last zero of $X$. The problem \eqref{eq:L1optimalpredictionproblem} can be generalised to any convex function $d:\R_+ \times \R_+ \mapsto \R_+$. That is, under the assumption that $\E(d(0,g))<\infty$, consider the optimal prediction problem:
\begin{align}
\label{eq:doptimalpredictionproblem}
V_d=\inf_{\tau \in \mathcal{T}} \E(d(\tau,g)).
\end{align}
As is in the case for problem \eqref{eq:L1optimalpredictionproblem}, the problem \eqref{eq:doptimalpredictionproblem} cannot be solved using standard techniques of optimal stopping (cf. \cite{peskir2006optimal}) since the random variable $g$ depends on the whole path of the process $X$ and hence is only $\F$ measurable. However, the following Lemma provides an equivalence between the optimal prediction problem above and an optimal stopping problem driven by the process $\{(g_t,t, X_t),t\geq 0\}$.
\begin{lemma}
Let $X$ be a spectrally negative L\'evy process drifting to infinity and $d:\R_+ \times \R_+ \mapsto \R_+$ a convex function such that $\E(d(0,g))<\infty$. We have for each $\tau \in \mathcal{T}$,
\begin{align*}
\E(d(\tau,g))&=\E\left( \int_0^{\tau}  G_d(g_s,s,X_s) \dd s+d(0,g)\right),
\end{align*}
where $G_d(\gamma,s,x)=\frac{\partial }{\partial x} d_+(s,\gamma)\psi'(0+)W(x)+\E_{x}\frac{\partial }{\partial x} d_+(s,g+s) \I_{\{g>0\}})$ and $\frac{\partial }{\partial x} d_+$ is the right derivative with respect to the first argument of $d$.
\end{lemma}

\begin{proof}
Let $\tau \in \mathcal{T}$. Using the integral representation of convex functions, we obtain that

\begin{align*}
\E(d(\tau,g))&=\E\left( \int_0^{\tau}  \frac{\partial }{\partial x} d_+(s,g)\dd s+d(0,g)\right),
\end{align*}
where $\frac{\partial }{\partial x} d_+$ is the right-derivative of $d$ with respect to its first coordinate. Then, using Fubini's theorem and the tower property for conditional expectation, we see that

\begin{align*}
\E\left( \int_0^{\tau}  \frac{\partial }{\partial x} d_+(s,g)\dd s\right) 
&= \int_0^{\infty}  \E\left[\I_{\{ s\leq \tau\}} \E\left( \frac{\partial }{\partial x} d_+(s,g) \bigg|\F_s\right) \dd s\right] \\
&=\E\left[\int_0^{\tau} \E\left( \frac{\partial }{\partial x} d_+(s,g) \bigg|\F_s\right) \dd s \right]. \\
\end{align*}
Hence, we proceed to find an expression for the conditional expectation inside the last integral. From the strong Markov property of the process $\{(g_t,t,X_t),t\geq 0\}$ we have that
\begin{align*}
\E\left( \frac{\partial }{\partial x} d_+(s,g) \bigg|\F_s\right)
&=\E_{g_s,s,X_s}  \left( \frac{\partial }{\partial x} d_+(s,g)\right).
\end{align*}
From \eqref{eq:initialmeausreP}, we have that for $(\gamma,s,x)\in E_g$,
\begin{align*}
\E_{\gamma,s,x}  \left( \frac{\partial }{\partial x} d_+(s,g)\right)&=\E_{x}  \left( \frac{\partial }{\partial x} d_+(s,\gamma)\I_{\{\tau_0^-=\infty \}}\right)+\E_{x}  \left( \frac{\partial }{\partial x} d_+(s,g+s)\I_{\{\tau_0^-<\infty \}}\right)\\
&=\frac{\partial }{\partial x} d_+(s,\gamma) \psi'(0+)W(x)+\E_x\left(\frac{\partial }{\partial x} d_+(s,g+s)\I_{\{ g>0\}}\right)\\
&=G_d(\gamma,s,x).
\end{align*}
So that, for any $s\geq 0$,
\begin{align*}
\E\left( \frac{\partial }{\partial x} d_+(s,g) \bigg|\F_s\right)=G_d(g_s,s,X_s).
\end{align*}
The result then follows.
\end{proof}
The lemma above directly implies that solving the optimal prediction problem \eqref{eq:doptimalpredictionproblem} is equivalent to solving the optimal stopping problem
\begin{align}
\label{eq:doptimalstoppingproblem}
\inf_{\tau \in \mathcal{T}} \E_{\gamma,t,x}\left[ \int_0^{\tau} G_d(g_{s+t},s+t,X_{s+t}) \dd s \right],
\end{align}
for each $(\gamma,t,x)\in E_g$. In \cite{baurdoux2020lp}, the case when $d(x,y)=|x-y|^p$ with $p>1$ is solved, that is, $g$ is approximated by a stopping time using an $L_p$ distance. In this case, the problem \eqref{eq:doptimalstoppingproblem} reads as
\begin{align}
\label{eq:Lpoptimalstoppingproblem}
V(u,x)=\inf_{\tau \in \mathcal{T}}\E_{u,x}\left( \int_0^{\tau} G(U_s,X_s)\dd s \right),
\end{align}
for $(u,x)\in E$, where $G(u,x)=u^{p-1}\psi'(0+)W(x)-\E_x(g^{p-1})$. Although in \cite{baurdoux2020lp} a rather general spectrally negative L\'evy process is considered (only integrability conditions on the L\'evy measure are imposed), for the sake of simplicity, we include here the main results (see Theorem 3.3 in \cite{baurdoux2020lp}) when $X$ is a Brownian motion with positive drift (with Gaussian coefficient $\sigma>0$). It is shown that an optimal stopping time for \eqref{eq:Lpoptimalstoppingproblem} is given by 
\begin{align*}
\tau_D=\inf\{t\geq 0: X_t\geq b(U_t) \},
\end{align*}
where $b$ is a strictly positive, non-increasing and continuous function such that $\lim_{u\rightarrow\infty }b(u)=0$ and $\lim_{u\downarrow 0}b(u)=\infty$. Moreover, the function $b$ and the value $V(0,0)$ are characterised as the only solution to the non-linear equations
\begin{align*}
0&=V(0,0)\frac{\sigma^2}{2}W'(b(u))+    \E_{b(u)}\left( \int_0^{\tau_0^-} G(u+s,X_s) \I_{\{X_s<b(u+s) \}}\dd s \right),\\
0&=V(0,0)\frac{\sigma^2}{2}W''(0+)+\frac{\partial}{\partial x} \E_{x}\left( \int_0^{\tau_0^-} G(u+s,X_s) \I_{\{X_s<b(u+s) \}}\dd s \right)\bigg|_{x=0,u=0}+\int_{[0,\infty)} \E_{-u}(g^{p-1}) W(\dd u),
\end{align*}
where $b$ is considered in the class of continuous functions bounded by below by $h(u):=\inf\{x\geq 0: G(u,x)\geq 0 \}$ and $V(0,0)<0$.\\

Note that properties of the stochastic process $(U, X)$ were needed to derive the above result. For instance, the Markov property of $(U, X)$ is crucial to solving the optimal stopping problem \eqref{eq:Lpoptimalstoppingproblem} using the standard theory of optimal stopping. Moreover, the explicit version of the infinitesimal generator \eqref{eq:infinitesimalgeneratorofgtX} and formula \eqref{eq:calucationofintegraluptoinfinity2ndformula}  played a crucial role in deriving the non-linear equations presented above. In particular, given the unusual shape of the set $E$, \eqref{eq:calucationofintegraluptoinfinity2ndformula} gives us a method to show that there is smooth pasting at the point $(0,0)$ for the function $V$, which allowed us to propose a characterisation of the value $V(0,0)$.

\section{Main proofs}
\label{sec:proofs}
This section is dedicated to presenting the main proofs of this paper. We start by including the proof of Proposition \ref{prop:Markovproperty}.

\subsection{Proof of Proposition \ref{prop:Markovproperty}}
From the definition of $g_t$, it is easy to note that for all $t\geq 0$, we have $X_t \leq 0$ if and only if $g_t=t$, from which we obtain that $(g_t,t, X_t)$ can take only values in $E_g$. Now, we proceed to show the strong Markov property holds. Consider a measurable positive function $h: E_g \mapsto \R$. Then, we have for any stopping time $\tau$ and $s\geq 0$,
\begin{align*}
\E( h(g_{\tau+s},\tau+s,X_{\tau+s})|\F_{\tau})&=\E(h(g_{\tau} \vee \sup\{ r\in [\tau,s+\tau]: X_{r} \leq 0 \},\tau+s,X_{\tau+s} )|\F_{\tau})\\
&=\E( h(g_{\tau} \vee \sup\{ r\in [\tau,s+\tau]: \widetilde{X}_{r-\tau}+X_{\tau} \leq 0 \},\tau+s,\widetilde{X}_s+X_{\tau} )|\F_{\tau}),
\end{align*}
where $\widetilde{X}_r=X_{r+\tau}-X_{\tau}$ and $a\vee b:=\max\{ a,b \}$ for any $a,b\in \R$. Using the strong Markov property for L\'evy processes and the fact that $g_{\tau}$ and $X_{\tau}$ are $\F_{\tau}$ measurable we obtain that
\begin{align*}
\E( h(g_{\tau+s},\tau+s,X_{\tau+s})|\F_{\tau})=f_s(g_{\tau},\tau,X_{\tau}),
\end{align*}
where for any $x\in \R$ and $0\leq \gamma \leq t$, the function $f_s$ is given by
\begin{align*}
f_s(\gamma,t ,x)&=\E_x( h(\gamma \vee \sup\{ r\in [t,s+t]: X_{r-t} \leq 0 \},t+s,X_s )).
\end{align*}
Note that, on the event $\{\sigma_{0}^->s\}$, the set $\{ r\in [t,s+t]: X_{r-t} \leq 0 \}=\emptyset$. Then, $\gamma \vee \sup\{ r\in [t,s+t]: X_{r-t} \leq 0 \}=\gamma$, where we used the convention that $\sup \emptyset =0$. Otherwise, in the event $\{\sigma_{0}^-\leq s\}$, we have that $\{ r\in [t,s+t]: X_{r-t} \leq 0 \}\neq \emptyset$ and then $\sup\{ r\in [t,s+t]: X_{r-t} \leq 0 \} \geq t\geq \gamma$. Hence, we have that, in the event $\{\sigma_{0}^-\leq s\}$,
\begin{align*}
\gamma \vee \sup\{ r\in [t,s+t]: X_{r-t} \leq 0 \}&=\sup\{ r\in [t,s+t]: X_{r-t} \leq 0 \}\\
&=t+\sup\{ r\in [0,s]: X_{r} \leq 0 \}\\
&=t+g_s.
\end{align*}
 Therefore, for any $x\in \R$ and $0\leq \gamma \leq t$, the function $f_s$ takes the form
\begin{align*}
f_s(\gamma,t ,x)&=\E_x(h(\gamma,t+s, X_s)\I_{\{ \sigma_0^->s\}})+\E_x(h(g_s+t,t+s, X_s)\I_{\{ \sigma_0^-\leq s\}}).
\end{align*}
On the other hand, similar calculations lead us to
\begin{align*}
\E( h(g_{\tau+s},\tau+s,X_{\tau+s})|\sigma(g_{\tau},\tau,X_{\tau}))=f_s(g_{\tau},\tau,X_{\tau}).
\end{align*}
Hence, for any measurable positive function $h$, we obtain 
\begin{align*}
\E( h(g_{\tau+s},\tau+s,X_{\tau+s})|\F_{ \tau})&=\E( h(g_{\tau+s},\tau+s,X_{\tau+s})|\sigma(g_{\tau},\tau,X_{\tau})).
\end{align*}
Therefore, we conclude that the process $\{ (g_t,t,X_t),t\geq 0 \}$ is a strong Markov process.

\subsection{Perturbed L\'evy process}
Suppose that $X$ is a spectrally negative L\'evy process of finite variation. Then, with probability one, it takes a positive amount of time to cross below $0$, that is, $\tau_0^->0$ $\P$-a.s. Hence, stopping at the consecutive times at which $X$ is below zero and together with the ideas mentioned in Remark \ref{rem:behaviourofgt}, we can fully describe the behaviour of $g_t$ and then derive the results in Theorems \ref{thm:ItoformulaforgttXt} and \ref{thm:integralofgr}. However,  when $X$ is of infinite variation, it is well known that the closure of the set of zeroes of $X$ is perfect and nowhere dense, and the mentioned approach is no longer useful (since we have that $\tau_0^-=0$ $\P$-a.s). Therefore, we use a perturbation method to exploit the idea applicable to finite variation processes. This method, which is mainly based on the work of \cite{dassios2011} and \cite{revuz2004continuous} (see Theorem VI.1.10), consists of constructing a new ``perturbed'' process $X^{(\varepsilon)}$ (for $\varepsilon$ sufficiently small) that approximates $X$, with the property that $X^{(\varepsilon)}$ visits the level zero a finite number of times before any time $t\geq 0$. Then we approximate $g_t$ by the corresponding last zero process of $X^{(\varepsilon)}$.\\

We formally describe the construction of the ``perturbed'' process $X^{(\varepsilon)}$. Let $\varepsilon>0$, define the stopping times $\rho_{1,\varepsilon}^-=0$ and for any $k \geq 1$,
\begin{align*}
\rho_{k,\varepsilon}^+&:=\inf\{ t>\rho_{k,\varepsilon}^-: X_t \geq \varepsilon \},\\
\rho_{k+1,\varepsilon}^-&:=\inf\{ t>\rho_{k,\varepsilon}^+: X_t< 0\},
\end{align*}
where we use the usual convention that $\inf \emptyset=\infty$. We define the auxiliary process $X^{(\varepsilon)}=\{X_t^{(\varepsilon)} ,t\geq 0 \}$, where for $t\geq 0$,

\begin{align}
\label{eq:definitionofXepsilon}
X_t^{(\varepsilon)}=\left\{
\begin{array}{ll}
X_t-\varepsilon, &  \rho_{k,\varepsilon}^-\leq t< \rho_{k,\varepsilon }^+, \\
X_t, & \rho_{k,\varepsilon}^+\leq t< \rho_{k+1,\varepsilon }^- . \\
\end{array}
\right.
\end{align}
In Figure \ref{fig:samplepathofperturbedlevy} we include a sample path of the process $X^{(\varepsilon)}$ compared with the original process $X$. \\
\begin{figure}[hbtp]
\centering
\includegraphics[scale=0.41]{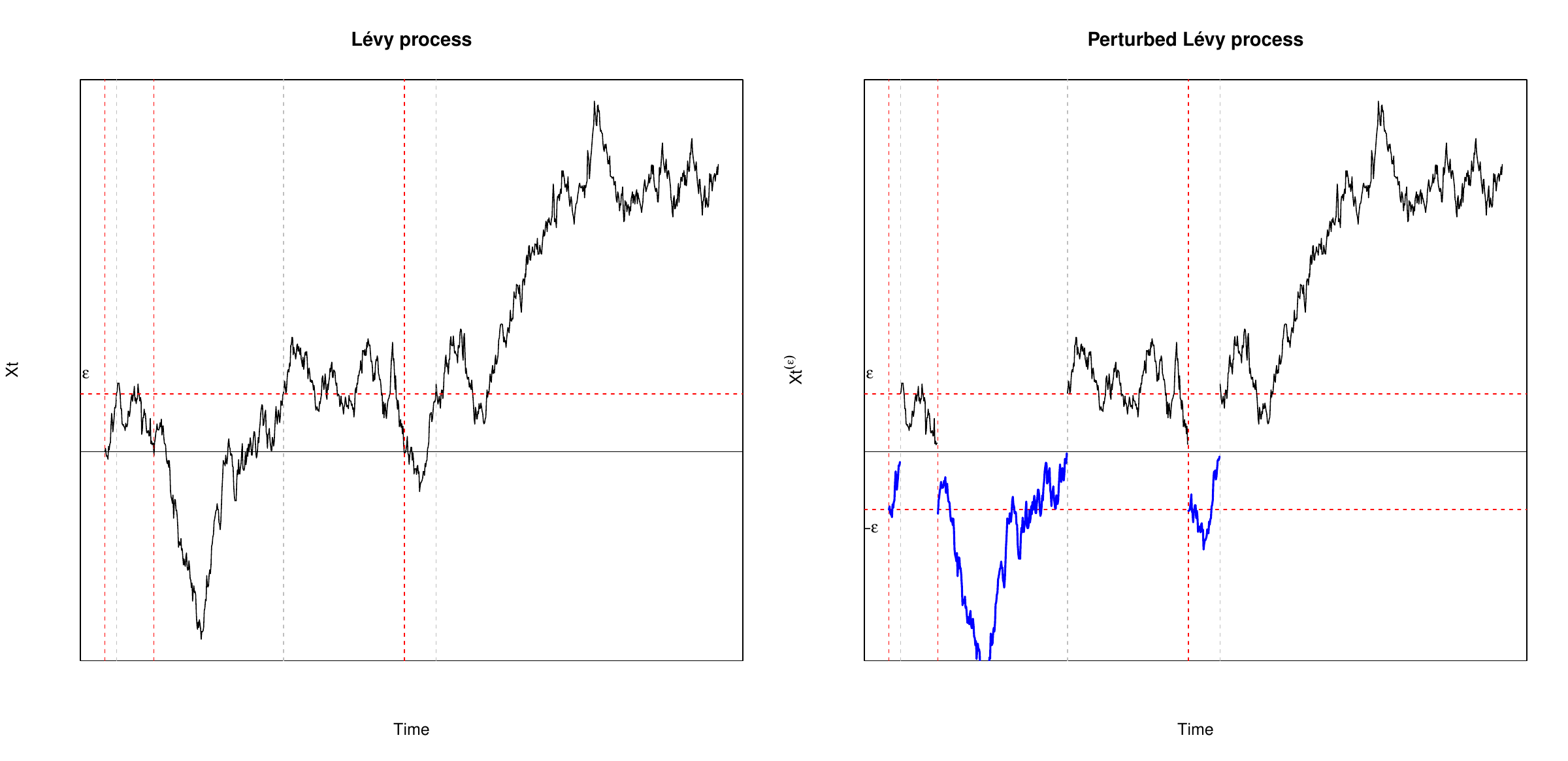}
\caption{Left: Sample path of $X$. Right: Sample path of the perturbed process $X^{(\varepsilon)}$. The red vertical lines correspond to the sequence of stopping times $\{\rho_{k,\varepsilon}^-,k\geq 1 \}$, whereas the grey vertical lines correspond to $\{\rho_{k,\varepsilon}^+,k\geq 1 \}$.}
\label{fig:samplepathofperturbedlevy}
\end{figure}

It is straightforward from the definition of $X^{(\varepsilon)}$ that $X_t -\varepsilon \leq X_t^{(\varepsilon)} \leq X_t$, and that $X^{(\varepsilon)} \uparrow X $ uniformly when $\varepsilon \downarrow 0$, i.e.,
\begin{align*}
\lim_{\varepsilon \downarrow 0} \sup_{t\geq 0} |X_t^{(\varepsilon)}-X_t|=0.
\end{align*}
In addition, we define the last zero process $g_{\varepsilon,t}$ associated to the process $X^{(\varepsilon)}$, that is, 
\begin{align*}
g_{\varepsilon,t}=\sup\{0\leq s\leq t: X_s^{(\varepsilon)}\leq 0 \}
\end{align*}
for $\varepsilon>0$ and $t\geq 0$. The inequality $g_t\leq g_{\varepsilon,t} \leq g_t^{(\varepsilon)}$ holds for all $t\geq 0$. Taking $\varepsilon\downarrow 0$, and by the right continuity of $x\mapsto g_t^{(x)}$, we obtain that $ g_{\varepsilon,t} \downarrow  g_t$ when $\varepsilon \downarrow 0$ for all $t\geq 0$. Moreover, we have that $t-g_{\varepsilon,t}=:U_{\varepsilon,t} \uparrow U_t$ when $\varepsilon \downarrow 0$ for all $t\geq 0$. \\

Recall that the local time at the level $a\in \R$, $L^a=\{L_t^a,t\geq 0 \}$, is a continuous process defined in terms of the It\^o--Tanaka formula (see, e.g., Theorem IV.68 in \cite{protter2005} on p. 216 and \eqref{eq:definitionofLocaltimebtanaka}) and its measure $\dd L_t^a$ is carried by the set $\{s\geq 0: X_{s-}=X_s=a \}$. For ease of notation, we denote $L_t=L_t^0$ for every $t\geq 0$, the local time at the level zero. In this case (see e.g. Corollary IV.1 in \cite{protter2005} on p. 219), we have that for any bounded and measurable function $g$,
\begin{align*}
\int_{-\infty}^{\infty} L_t^a g(a)\dd a=\sigma \int_0^t g(X_s) \dd s, \qquad \text{a.s.} 
\end{align*}

For each $\varepsilon>0$ and $t\geq 0$, we define 
\begin{align*}
M_t^{(\varepsilon)}=\sum_{k=1}^{\infty} \I_{\{\rho_{k,\varepsilon}^-<t \}}.
\end{align*}
Note that $M_t^{(\varepsilon)}-1$ is the number of downcrossings of the level zero at time $t> 0$ of the process $X_t^{(\varepsilon)}$. It turns out that $M_t^{(\varepsilon)}$ works as an approximation of the local time at zero in some sense. We have the following lemma. The proof follows an argument similar to the one in \cite{revuz2004continuous} (see Theorem VI.1.10 on p. 227).

\begin{lemma}
\label{lemma:Mtlocaltimeconvergence}
Suppose that $X$ is a spectrally negative L\'evy process. Then for all $t\geq 0$,
\begin{align*}
\lim_{\varepsilon \downarrow 0}\varepsilon M_t^{(\varepsilon)} =\frac{1}{2} L_t \qquad \text{in probability}.
\end{align*}
\end{lemma}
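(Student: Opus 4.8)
The plan is to recognise $M_t^{(\varepsilon)}$ as, essentially, the number of downcrossings of the interval $[0,\varepsilon]$ by $X$ before time $t$, and then to establish the downcrossing representation of the local time, adapting the continuous–semimartingale argument of \cite{revuz2004continuous} (Exercise VI.1.19) to the presence of negative jumps. First I would note that, by construction, $\sigma_{1,\varepsilon}^{-}=0$ and for $k\ge 1$ the time $\sigma_{k+1,\varepsilon}^{-}$ is the instant at which $X$, after having climbed to level $\varepsilon$ at $\sigma_{k,\varepsilon}^{+}$, first returns strictly below $0$; hence $\sigma_{k+1,\varepsilon}^{-}$ marks the completion of the $k$-th downcrossing of $[0,\varepsilon]$ and, for $t>0$, $M_t^{(\varepsilon)}=1+D_t^{\varepsilon}$, where $D_t^{\varepsilon}$ counts the downcrossings of $[0,\varepsilon]$ completed before $t$. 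Since the additive constant is annihilated by the factor $\varepsilon$, it suffices to prove $\varepsilon D_t^{\varepsilon}\to\tfrac12 L_t$ a.s. Two structural consequences of spectral negativity will be used throughout: because $X$ has no positive jumps every upward passage is continuous and $X_{\sigma_{k,\varepsilon}^{+}}=\varepsilon$ exactly, so each downcrossing is preceded by a genuine continuous upcrossing of $[0,\varepsilon]$; and $\varepsilon\mapsto D_t^{\varepsilon}$ is monotone, with $\varepsilon'\le\varepsilon$ implying $D_t^{\varepsilon'}\ge D_t^{\varepsilon}$ (immediate from the definitions), which will drive the final interpolation.

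Next I would pin down the limit through the occupation density formula. Writing $X^c=\sigma B$ for the continuous martingale part, so that $\langle X^c\rangle_t=\sigma^2 t$, the Tanaka local time $L^a$ (equal to the $L$ of the statement when $a=0$) is the occupation density of $X$ with respect to $\dd\langle X^c\rangle$, giving $\int_0^t \I_{\{0<X_s\le\varepsilon\}}\sigma^2\,\dd s=\int_0^{\varepsilon}L^a_t\,\dd a$, and by right-continuity of $a\mapsto L^a_t$ one has $\varepsilon^{-1}\int_0^{\varepsilon}L^a_t\,\dd a\to L_t$. This fixes the correct normalisation and already exhibits the dichotomy: when $\sigma=0$ the process has no continuous martingale part, so $L^a_t=0$ for a.e.\ $a$, whence $L\equiv 0$ and the claim reduces to $\varepsilon D_t^{\varepsilon}\to 0$. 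In the continuous–semimartingale world the comparison of this occupation with the downcrossings is precisely \cite{revuz2004continuous}, Theorem VI.1.10 and Exercise VI.1.19, and yields $\varepsilon D_t^{\varepsilon}\to\tfrac12 L_t$; the remaining work is to transport this to $X$.

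The step I expect to be the main obstacle is controlling the downcrossings \emph{completed by a negative jump} straddling the whole interval $[0,\varepsilon]$, the only feature absent from the continuous theory. The plan is to use the excursion decomposition of $X$ away from $0$: by the compensation formula the completed downcrossings correspond to excursions above $0$ that reach height at least $\varepsilon$ before returning below $0$, and by the Poissonian structure of the excursion process $D_t^{\varepsilon}$ is, to leading order, $L_t$ times the excursion-measure rate $n(\text{height}\ge\varepsilon)$ of reaching height $\varepsilon$. For a spectrally negative L\'evy process this rate is governed by the scale function, being proportional to $1/W(\varepsilon)$ (up to the local-time normalisation, via $\P_\varepsilon(\tau_a^+<\tau_0^-)=W(\varepsilon)/W(a)$), so that $\varepsilon D_t^{\varepsilon}\sim \varepsilon L_t/W(\varepsilon)$. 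Since $W(\varepsilon)\sim 2\varepsilon/\sigma^2$ as $\varepsilon\downarrow0$ when $\sigma>0$, the jump-straddling crossings are subsumed into the same limit $\tfrac12 L_t$; when $\sigma=0$ one has $\varepsilon/W(\varepsilon)\to0$ (indeed $W(\varepsilon)\to 1/d>0$ in the finite-variation case and $W'(0+)=+\infty$ in the infinite-variation case), recovering the degenerate limit. Making the ``$\sim$'' rigorous, that is, bounding the discrepancy between $D_t^{\varepsilon}$ and its excursion-theoretic first-order term uniformly in small $\varepsilon$, is the technical heart of the argument.

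Finally, to obtain the almost sure statement rather than convergence in probability, I would first prove a.s.\ convergence along a geometric subsequence $\varepsilon_n=(1+\delta)^{-n}$, via a Borel--Cantelli argument using second-moment estimates for $\varepsilon_n D_t^{\varepsilon_n}-\tfrac12 L_t$ (the excursion counts being sums of asymptotically independent contributions make such bounds available). I would then interpolate to all $\varepsilon\downarrow0$ by monotonicity: for $\varepsilon\in[\varepsilon_{n+1},\varepsilon_n]$ the inequalities $\varepsilon_{n+1}D_t^{\varepsilon_n}\le\varepsilon D_t^{\varepsilon}\le\varepsilon_n D_t^{\varepsilon_{n+1}}$ trap $\varepsilon D_t^{\varepsilon}$ between $(1+\delta)^{-1}\varepsilon_n D_t^{\varepsilon_n}$ and $(1+\delta)\,\varepsilon_{n+1}D_t^{\varepsilon_{n+1}}$, so the a.s.\ limit lies within a factor $(1+\delta)^{\pm1}$ of $\tfrac12 L_t$; intersecting the a.s.\ events over a sequence $\delta_m\downarrow0$ and using continuity of $t\mapsto L_t$ together with monotonicity of $t\mapsto D_t^{\varepsilon}$ then yields $\lim_{\varepsilon\downarrow0}\varepsilon D_t^{\varepsilon}=\tfrac12 L_t$ a.s., simultaneously for all $t\ge0$.
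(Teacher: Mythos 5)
The paper itself does not give a written proof of this lemma: it states that the argument is ``analogous'' to Exercise VI.1.19 of Revuz and Yor, i.e.\ an adaptation of the Tanaka-formula/downcrossing argument for continuous semimartingales, and omits the details. Your route is therefore genuinely different: you count excursions above $0$ via It\^o excursion theory and scale functions. The skeleton of that route is viable. Indeed $M_t^{(\varepsilon)}=1+D_t^{\varepsilon}$; by spectral negativity every upcrossing is by creeping, so each completed downcrossing corresponds to one excursion above $0$ whose height exceeds $\varepsilon$; the Markov property under the excursion measure $n$ together with $\P_\varepsilon(\tau_a^+<\sigma_0^-)=W(\varepsilon)/W(a)$ gives the \emph{exact} identity $n(\max \ge \varepsilon)=c/W(\varepsilon)$, where $c$ depends only on the normalisation of the excursion local time $\ell$; the Poisson structure gives $D_t^{\varepsilon}/n(\max\ge\varepsilon)\to \ell_t$ a.s.; $W'(0+)=2/\sigma^2$ when $\sigma>0$; and your monotone interpolation along geometric subsequences is the standard, correct way to pass from a subsequence to all $\varepsilon\downarrow 0$. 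The finite-variation and $\sigma=0$ cases also come out as you say.

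There is, however, a genuine gap exactly where the constant $\tfrac12$ is produced. You write $\varepsilon D_t^{\varepsilon}\sim \varepsilon L_t/W(\varepsilon)$ and conclude the limit is $\tfrac12 L_t$; but with $W(\varepsilon)\sim 2\varepsilon/\sigma^2$ that display gives $\tfrac{\sigma^2}{2}L_t$, not $\tfrac12 L_t$. The error is the silent identification of the excursion (Markov) local time $\ell$ with the semimartingale local time $L$ of the statement: these differ by precisely the factor you lost, and ``up to the local-time normalisation'' is where the whole content of the lemma is hiding. What is missing is the calibration $L_t=\sigma^2\,c\,\ell_t$ with the \emph{same} constant $c$ as in $n(\max\ge\varepsilon)=c/W(\varepsilon)$. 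This can be supplied: under $n$ an excursion occupies $\dd a$, $a>0$, only after creeping up to $a$, so by the Markov property under $n$ and the potential density \eqref{eq:qpotentialdensitytkillingonexiting0} with $q=0$,
\begin{align*}
n\left(\int_0^{\zeta}\I_{\{e_s\in \dd a\}}\dd s\right)=\frac{c}{W(a)}\left[e^{-\Phi(0)a}W(a)-W(0)\right]\dd a=c\,e^{-\Phi(0)a}\,\dd a
\end{align*}
when $\sigma>0$ (so that $W(0)=0$), and combining this with the pathwise occupation formula $\int_0^t \I_{\{0<X_s\le \delta\}}\sigma^2\,\dd s=\int_0^{\delta}L_t^a\,\dd a$ and $\int_0^t \I_{\{0<X_s\le\delta\}}\dd s\approx \ell_t\, n(\mathrm{occ}(0,\delta])\sim c\,\delta\,\ell_t$ yields $L_t=\sigma^2 c\,\ell_t$. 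Then $\varepsilon D_t^{\varepsilon}\to \ell_t\,c\,\sigma^2/2=\tfrac12 L_t$, and the unknown normalisation constant $c$ cancels. Without this calibration step your argument proves convergence of $\varepsilon M_t^{(\varepsilon)}$ to \emph{some} multiple of the local time but cannot pin down the factor $\tfrac12$; with it, your excursion-theoretic proof closes.
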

\begin{proof}
From the Meyer--It\^o formula (see, e.g., \cite{protter2005}, Theorem IV.68 and Theorem IV.70 in pp. 216,218) we know that
\begin{align*}
X_t^+=X_0^++\int_{(0,t]} \I_{\{X_{s-}>0\}}\dd X_s +\int_{(0,t]} \int_{(-\infty,0)} (X_{s-}+y)^- \I_{\{X_{s-}>0\}} N(\dd s \times \dd y)+\frac{1}{2} L_t,
\end{align*}
where $x^+$ and $x^-$ are the positive and negative part, respectively, of $x$ defined by $x^+=\max\{ x,0\}$ and $x^-=-\min\{x,0\}$.
Hence, for $t\geq 0$ and $1 \leq k\leq M_t^{(\varepsilon)}$ we get that 

\begin{align*}
X_{\rho_{k,\varepsilon}^+ \wedge t}^+ -X_{\rho_{k,\varepsilon}^-}^+
&=\int_{(\rho_{k,\varepsilon}^-,\rho_{k,\varepsilon}^+\wedge t]} \I_{\{X_{s-}>0\}}\dd X_s +\int_{(\rho_{k,\varepsilon}^-,\rho_{k,\varepsilon}^+ \wedge t]} \int_{(-\infty,0)} (X_{s-}+y)^- \I_{\{X_{s-}>0\}} N(\dd s \times \dd y)\\
&\qquad +\frac{1}{2} (L_{\rho_{k,\varepsilon}^+ \wedge t}-L_{\rho_{k,\varepsilon}^-}).
\end{align*}
From the definition of the stopping times $\rho_{k,\varepsilon}^-$, we have that $X_r>0$ when $r\in [\rho_{k,\varepsilon}^+, \rho_{k+1,\varepsilon}^-)$ for some $k \geq 1$, and, since $L$ is continuous and only charge points in the set of zeros of $X$, we have that $L_{\rho_{k,\varepsilon}^+}=L_{\rho_{k+1,\varepsilon}^-}$ and $L_{t \wedge \rho_{M_t^{(\varepsilon)},\varepsilon}^+}=L_t$. Hence, using a telescopic sum and the fact that $g_{\varepsilon,r-}=r$ if and only if $r \in (\rho_{k,\varepsilon}^-,\rho_{k,\varepsilon}^+]$, for some $k\geq 1$, we have that

\begin{align*}
X_{t \wedge \rho_{M_t^{(\varepsilon)},\varepsilon}^+}^+ &-X_{\rho_{M_t^{(\varepsilon)},\varepsilon}^-}^+ +\sum_{k=1}^{M_t^{(\varepsilon)}-1} (X_{\rho_{k,\varepsilon}^+}^+ -X_{\rho_{k,\varepsilon}^-}^+ ) \\
& =\int_{(0,t]} \I_{\{ g_{\varepsilon,s-}=s\}}\I_{\{X_{s-}>0\}}\dd X_s +\int_{(0,t]} \int_{(-\infty,0)} (X_{s-}+y)^- \I_{\{g_{\varepsilon,s-}=s\}} \I_{\{X_{s-}>0\}} N(\dd s \times \dd y)+\frac{1}{2} L_t.
\end{align*}
Thus, since $X_{\rho_{k,\varepsilon}^-}\leq 0$ and $X_{\rho_{k,\varepsilon}^+}=\varepsilon$ on the event $\{\rho_{k,\varepsilon}^+<\infty \}$ for all $k\geq 1$, we obtain that for any $t\geq 0$ and $\varepsilon>0$, 

\begin{align*}
X_{t \wedge \rho_{M_t^{(\varepsilon)},\varepsilon}^+}^+ & +\varepsilon (M_t^{(\varepsilon)}-1) \\
& =\int_{(0,t]} \I_{\{ g_{\varepsilon,s-}=s\}}\I_{\{X_{s-}>0\}}\dd X_s +\int_{(0,t]} \int_{(-\infty,0)} (X_{s-}+y)^- \I_{\{ g_{\varepsilon,s-}=s\}} \I_{\{X_{s-}>0\}} N(\dd s \times \dd y)+\frac{1}{2} L_t.
\end{align*}
Note that $0\leq X_{t \wedge \rho_{M_t^{(\varepsilon)},\varepsilon}^+}^+ \leq \varepsilon$ and then $\lim_{\varepsilon \downarrow 0} X_{t \wedge \rho_{M_t^{(\varepsilon)},\varepsilon}^+}^+=0$. Moreover, from the inequality $g_t\leq g_{\varepsilon,t} \leq g_t^{(\varepsilon)}$ we see that 
\begin{align*}
\I_{\{g_{t-}^{(\varepsilon)}<t \}}\leq \I_{\{g_{\varepsilon,t-}<t \}}\leq  \I_{\{g_{t-}<t \}}
\end{align*}
for any $t>0$ and $\varepsilon>0$. Thus, since $g_{t-}^{(\varepsilon)} \downarrow g_{t-}$ when $\varepsilon \downarrow 0$, and the mapping $x\mapsto \I_{\{ x<t \}}$ is right-continuous, for fixed $t>0$, we conclude that $\lim_{\varepsilon \downarrow 0} \I_{\{g_{\varepsilon,t-}<t \}}=\I_{\{g_{t-}<t \}}$ for all $t>0$. Therefore, from the dominated convergence theorem for stochastic integrals (see, for example, Theorem IV.15 and Theorem IV.32 in \cite{protter2005} on pp. 166,176, respectively), we have that the first term in the right-hand side of the equation above converges to $0$ uniformly on compacts in probability, that is, for all $t> 0$, 
\begin{align*}
\sup_{0\leq s\leq t} \left|\int_{(0,s]}  \I_{\{ g_{\varepsilon,r-}=r\}}\I_{\{X_{r-}>0\}}\dd X_r  \right| 
\end{align*}
converges to $0$ in probability when $\varepsilon \downarrow 0$. Note that, for all $s\geq 0$, we have that $(X_{s-}+y)^- \I_{\{ g_{\varepsilon,s-}=s\}} \I_{\{X_{s-}>0\}}  \leq (X_{s-}+y)^-  \I_{\{X_{s-}>0\}}$ and that

\begin{align*}
 \int_{(0,t]} \int_{(-\infty,0)} (X_{s-}+y)^-  \I_{\{X_{s-}>0\}} N(\dd s \times \dd y) <\infty,
\end{align*}
for all $t\geq 0$. Then, by the dominated convergence theorem

\begin{align*}
\lim_{\varepsilon \downarrow 0}\int_{(0,t]} \int_{(-\infty,0)} (X_{s-}+y)^- \I_{\{g_{\varepsilon,s-}=s\}} \I_{\{X_{s-}>0\}} N(\dd s \times \dd y)=0,
\end{align*}
for any $t\geq 0$. Thus, for fixed $t\geq 0$, we have that $\varepsilon M_t^{(\varepsilon)}$ converges to $L_t/2$ in probability when $\varepsilon \downarrow 0$.
%
\end{proof}

\begin{rem}
\label{rem:behaviourofget}
For all $\varepsilon>0$ fixed, we can describe the paths of the process $\{ g_{\varepsilon,t},t\geq 0\}$ in terms of the stopping times $\{ (\rho_{k,\varepsilon}^-,\rho_{k,\varepsilon}^+),k\geq 1\}$. When $X_{t}^{(\varepsilon)}\leq 0$ we have that $\rho_{k,\varepsilon}^- \leq t <\rho_{k,\varepsilon}^+$, for some $k\geq 1$, and then $g_{\varepsilon,t}=t$. Similarly, when $X_t^{(\varepsilon)} > 0$, there exists $k\geq 1$ such that $\rho_{k,\varepsilon}^+ \leq t <\rho_{k+1,\varepsilon}^-$, and hence, $g_{\varepsilon,t}=\rho_{k,\varepsilon}^+$.  The reader can refer to Figure \ref{fig:samplepathofperturbedlevy} for a graphical representation of this fact.
\end{rem}

We conclude this section by stating the following Proposition for stopped processes that will be of use in the proof of Theorem \ref{thm:ItoformulaforgttXt}. We introduce some additional notation, for any $-\infty\leq b<a\leq \infty$ and any stopping time $\tau$, we define the following stopping time 
\begin{align*}
T_{a,b,\tau}=\inf\{t\geq \tau: X_t\notin [b,a) \}.
\end{align*}
We also recall that $i=1$ if $X$ is of finite variation and $i=2$ when $X$ is of infinite variation.
\begin{prop}
\label{prop:stoppedItoformula} 
Suppose that $X$ is any spectrally negative process. Take $-\infty\leq b<a\leq \infty$ and define $C=[0,\infty)\times (b,a)$. Let $F:\R_+\times \R \mapsto \R$ be a function such that $F$ is $C^{1,i}$ on $\bar{C}$ and, if $a<\infty$, we assume that $F$ satisfies $\lim_{h \downarrow 0} F(t,a-h)=F(t,a)$ for all $t\geq 0$. Further, if $\sigma>0$ and $b>-\infty$, assume that $\lim_{h\downarrow 0}F(t,b+h)=F(t,b)$ for all $t\geq 0$.Then, for any stopping time $\tau$ and $t\geq 0$ we have
\begin{align}
F(t\wedge T_{a,b,\tau}&,X_{t\wedge T_{a,b,\tau}})\nonumber\\
&= F(t\wedge \tau ,X_{t\wedge \tau})+\int_{t\wedge \tau}^{t\wedge T_{a,b,\tau}} \frac{\partial F}{\partial t}(s,X_{s-})\dd s+\int_{(t\wedge \tau, t\wedge T_{a,b,\tau}]} \frac{\partial F}{\partial x} (s,X_{s-})\dd X_s \nonumber\\
&\qquad +\frac{1}{2}\sigma^2 \int_{t\wedge \tau}^{t\wedge T_{a,b,\tau}} \frac{\partial^2 F}{\partial x^2} (s,X_{s-})\dd s \nonumber\\
\label{eq:Itononstoppedprocesses}
&\qquad+\int_{(t\wedge \tau,t\wedge T_{a,b,\tau}]} \int_{(-\infty,0)}[F(s,X_{s-}+y)-F(s,X_{s-})-y \frac{\partial F}{\partial x} (s,X_{s-}) ]N(\dd s\times \dd y).
\end{align}
\end{prop}

\begin{proof}
We first note that $t\wedge \tau \leq t\wedge T_{a,b,\tau} $ and that the statement trivially holds when $t\leq \tau$ or when $X_{\tau} \notin [b,a) $, as $T_{a,b,\tau}=\tau$ in the latter case. Moreover, if $X$ is of infinite variation and $X_{\tau}=b$, it can also be easily seen that $T_{a,b,\tau}=\tau$. Further, from the Markov property we see that if $a<\infty$,
\begin{align*}
\P(X_{ T_{a,b,\tau} }>a,X_{\tau}\in [b,a), T_{a,b,\tau}<\infty)=\E\left[\I_{\{X_{\tau}\in [b,a), T_{a,b,\tau}<\infty \}}  \P_{X_{\tau} }( X_{ \tau_a^+ \wedge \tau_b^-}  >a )\right]=0,
\end{align*}
where we used that $X_{\tau_a^+}=a$ with probability one on the event $\{ 0<\tau_a^+<\infty\}$ and $X_{\tau_b^-}\leq b$ on the event $\{0<\tau_b^-<\infty \}$, whenever $b>-\infty$. Moreover, if $\sigma=0$ and $b>-\infty$, we deduce by the strong Markov property that
\begin{align*}
\P(X_{ T_{a,b,\tau} }=b,X_{\tau}\in (b,a), T_{a,b,\tau}<\infty)
&=\E\left[ \I_{\{X_{\tau}\in (b,a) \}} \P_{X_{\tau} }( X_{ \tau_b^-} =b, \tau_b^-<\tau_a^+ )\right]\\
&\leq \E\left[ \I_{\{X_{\tau}\in (b,a) \}}  \P_{X_{\tau} }( X_{ \tau_b^-} =b, \tau_b^-<\infty )\right]\\
&=0,
\end{align*}
where we used that $X_{\tau_b^-}=b$ with positive probability, on the event $\{0< \tau_b^-<\infty\}$, only when $\sigma>0$ (see \eqref{eq:probabilityofcreepingdownwards}). Similarly, when $X$ is of finite variation, we can see that $\P(X_{ T_{a,b,\tau} }=b, T_{a,b,\tau}<\infty,X_{\tau}=b)=0$.
We define the auxiliary function 
\begin{align*}
\widehat{F}(t,x)=\left\{
\begin{array}{ll}
F(t,a-), & t\geq 0 \text{ and } x\geq a,\\
F(t,x), & t\geq 0 \text{ and } b<x< a,\\
F(t,b+), & t\geq 0 \text{ and } x\leq b.
\end{array}
\right.
\end{align*}
Hence, we have that $\widehat{F}$ is continuous on $\R_+\times \R$ and is such that $\widehat{F}$ is $C^{1,i}$ on $\bar{C}$, $\widehat{F}$ is $C^{1,i}$ on $\bar{D}_1$ and $\widehat{F}$ is $C^{1,i}$ on $\bar{D}_2$, where $D_1=[0,\infty)\times (a,\infty)$ and $D_2=[0,\infty)\times (-\infty,b)$. Using that $F(t,a)=F(t,a-)$ and $F(t,b+)=F(t,b)$, when $\sigma>0$, we see that 
\begin{align*}
F(t\wedge T_{a,b,\tau},X_{t\wedge T_{a,b,\tau}})
&=F(t,X_{t})\I_{\{t<T_{a,b,\tau} \}} +F(T_{a,b,\tau},a)\I_{\{T_{a,b,\tau} \leq t \}}\I_{\{X_{T_{a,b,\tau}} =a  \}}\\
&\qquad +F( T_{a,b,\tau},b)\I_{\{T_{a,b,\tau}\leq t  \}}\I_{\{X_{T_{a,b,\tau}}=b \}}+F( T_{a,b,\tau},X_{ T_{a,b,\tau}})\I_{\{T_{a,b,\tau}\leq t  \}}\I_{\{X_{T_{a,b,\tau}}<b \}}\\
&=\widehat{F}(t\wedge T_{a,b,\tau},X_{t\wedge T_{a,b,\tau}})+[F( T_{a,b,\tau},X_{T_{a,b,\tau}})-F(T_{a,b,\tau},b+)]\I_{\{T_{a,b,\tau}\leq t \}}\I_{\{X_{T_{a,b,\tau}}<b \}}\\
&=\widehat{F}(t\wedge T_{a,b,\tau},X_{t\wedge T_{a,b,\tau}})\\
&\qquad+\int_{(t\wedge \tau ,t\wedge T_{a,b,\tau}]}\int_{(-\infty,0)}[F( s,X_{ s-}+y)-F(s,b+)]\I_{\{X_{s-}+y \leq b\}} N(\dd s \times \dd y),
\end{align*}
Thus, applying the version for multiple curves of formula \eqref{eq:semimartingalelocaltimeItoformula} (see Remark 2.2 and Remark 3.3 on \cite{peskir2007change}) at the times $t\wedge T_{a,b,\tau}$ and $t\wedge \tau$, respectively, upon using that $L_{t\wedge T_{a,b,\tau}}^a=L_{t\wedge \tau}^a$ and $L_{t\wedge T_{a,b,\tau}}^b=L_{t\wedge \tau}^b$ as the local time is continuous and $X_{s-}\in (b,a)$ for $s\in ( \tau,t\wedge T_{a,b,\tau})$ when $\tau< t$ and $\tau<T_{a,b,\tau}$, we obtain that
\begin{align*}
\widehat{F}&(t\wedge T_{a,b,\tau},X_{t\wedge T_{a,b,\tau}})\\
&= \widehat{F}(t\wedge \tau,X_{t\wedge \tau})+\int_{t\wedge \tau}^{t\wedge  T_{a,b,\tau}} \frac{\partial \widehat{F}}{\partial t}(s,X_{s-})\dd s+\int_{(t\wedge \tau, t\wedge T_{a,b,\tau}]} \frac{\partial \widehat{F}}{\partial x} (s,X_{s-})\dd X_s+\frac{1}{2}\sigma^2 \int_{t\wedge \tau}^{t\wedge  T_{a,b,\tau}} \frac{\partial^2 \widehat{F}}{\partial x^2} (s, X_{s-}) \dd s\\
&\qquad+\int_{(t\wedge \tau,t\wedge   T_{a,b,\tau}]} \int_{(-\infty,0)}[\widehat{F}(s,X_{s-}+y)-\widehat{F}(s,X_{s-})-y \frac{\partial \widehat{F}}{\partial x} (s,X_{s-}) ]N(\dd s\times \dd y)\\
&= F(t\wedge \tau,X_{t\wedge\tau})+\int_{t\wedge \tau}^{t\wedge  T_{a,b,\tau}} \frac{\partial F}{\partial t}(s,X_{s-})\dd s+\int_{(t\wedge \tau, t\wedge  T_{a,b,\tau}]} \frac{\partial F}{\partial x} (s,X_{s-})\dd X_s+\frac{1}{2}\sigma^2 \int_{t\wedge \tau}^{t\wedge  T_{a,b,\tau}} \frac{\partial^2 F}{\partial x^2} (s, X_{s-}) \dd s\\
&\qquad+\int_{(t\wedge\tau,t\wedge  T_{a,b,\tau}]} \int_{(-\infty,0)}[F(s,X_{s-}+y)-F(s,X_{s-})-y \frac{\partial F}{\partial x} (s,X_{s-}) ]\I_{\{X_{s-}+y> b \}}N(\dd s\times \dd y)\\
&\qquad+\int_{(t\wedge\tau,t\wedge  T_{a,b,\tau}]} \int_{(-\infty,0)}[F(s,b+)-F(s,X_{s-})-y \frac{\partial F}{\partial x} (s,X_{s-}) ]\I_{\{ X_{s-}+y\leq b\}}N(\dd s\times \dd y).
\end{align*}
Therefore, we obtain the desired result by combining the two equations above.  
\end{proof}

\subsubsection{Proof of Theorem \ref{thm:ItoformulaforgttXt}}
\label{sub:proofofItoforumula}
We first assume that $F$ and its derivatives are bounded. It is easy to see that for each $\varepsilon>0$ we have $\sup_{n\geq 0} \rho_{k,\varepsilon}^-=\infty$. Then, for each $t\geq 0$ and $\varepsilon>0$, there exists a value $n\geq 1$ such that $\rho_{n,\varepsilon}^-\leq t< \rho_{n+1,\varepsilon}^-$, so that $M_{t}^{(\varepsilon)}=n$. We consider the case when $X_t>0$ (we omit the proof of the case when $X_t\leq 0$ as it is similar). Then, for any $\varepsilon>0$ sufficiently small, there exists a value $n\geq 1$ (depending on the value of $\varepsilon$) such that  $\rho_{n,\varepsilon}^+ \leq t <\rho_{n+1,\varepsilon}^-$. Note that in this case we have that $X_{t}^{(\varepsilon)}>0$ and $g_{\varepsilon,t}=\rho_{n,\varepsilon}^+$. Using a telescopic sum we obtain that
%
\begin{align*}
F(g_{\varepsilon,t},t,X_t^{(\varepsilon)})
&=F(g_{\varepsilon,0},0,X^{(\varepsilon)}_0+\varepsilon)+\sum_{k=1}^{M_t^{(\varepsilon)}}[ F(g_{\varepsilon,\rho_{k,\varepsilon}^-},\rho_{k,\varepsilon}^-,X^{(\varepsilon)}_{\rho_{k,\varepsilon}^-})-F(g_{\varepsilon,\rho_{k,\varepsilon}^-},\rho_{k,\varepsilon}^-,X^{(\varepsilon)}_{\rho_{k,\varepsilon}^-}+\varepsilon)]\\
&\qquad +\sum_{k=1}^{M_t^{(\varepsilon)}}[ F(g_{\varepsilon,\rho_{k,\varepsilon}^+},\rho_{k,\varepsilon}^+,X^{(\varepsilon)}_{\rho_{k,\varepsilon}^+}-\varepsilon)-F(g_{\varepsilon,\rho_{k,\varepsilon}^-},\rho_{k,\varepsilon}^-,X^{(\varepsilon)}_{\rho_{k,\varepsilon}^-})]\\
&\qquad +\sum_{k=1}^{M_t^{(\varepsilon)}}[ F(g_{\varepsilon,\rho_{k,\varepsilon}^+},\rho_{k,\varepsilon}^+,X^{(\varepsilon)}_{\rho_{k,\varepsilon}^+})-F(g_{\varepsilon,\rho_{k,\varepsilon}^+},\rho_{k,\varepsilon}^+,X^{(\varepsilon)}_{\rho_{k,\varepsilon}^+}-\varepsilon)]\\
&\qquad +\sum_{k=1}^{M_t^{(\varepsilon)}-1}[ F(g_{\varepsilon,\rho_{k+1,\varepsilon}^-},\rho_{k+1,\varepsilon}^-,X^{(\varepsilon)}_{\rho_{k+1,\varepsilon}^-}+\varepsilon)-F(g_{\varepsilon,\rho_{k,\varepsilon}^+},\rho_{k,\varepsilon}^+,X^{(\varepsilon)}_{\rho_{k,\varepsilon}^+})]\\
&\qquad+[F(g_{\varepsilon,t},t,X^{(\varepsilon)}_t)-F(g_{\varepsilon,\rho_{M_{t}^{(\varepsilon)},\varepsilon}^+},\rho_{M_{t}^{(\varepsilon)},\varepsilon}^+,X^{(\varepsilon)}_{\rho_{M_{t}^{(\varepsilon)},\varepsilon}^+})].
\end{align*}
From Remark \ref{rem:behaviourofget} we know that $g_{\varepsilon,\rho_{k,\varepsilon}^-}=\rho_{k,\varepsilon}^- $ and $g_{\varepsilon,\rho_{k,\varepsilon}^+}=\rho_{k,\varepsilon}^+$ for all $k\geq 1$. Thus, from the definition of $X^{(\varepsilon)}$ (see \eqref{eq:definitionofXepsilon}) we see that  
\begin{align}
F(g_{\varepsilon,t},t,X_t^{(\varepsilon)})
&=F(g_{\varepsilon,0},0,X^{(\varepsilon)}_0+\varepsilon)+\sum_{k=1}^{M_t^{(\varepsilon)}}[ F(\rho_{k,\varepsilon}^-,\rho_{k,\varepsilon}^-,X_{\rho_{k,\varepsilon}^-}-\varepsilon)-F(\rho_{k,\varepsilon}^-,\rho_{k,\varepsilon}^-,X_{\rho_{k,\varepsilon}^-})]\nonumber\\
&\qquad +\sum_{k=1}^{M_t^{(\varepsilon)}}[ F(\rho_{k,\varepsilon}^+,\rho_{k,\varepsilon}^+,X_{\rho_{k,\varepsilon}^+}-\varepsilon)-F(\rho_{k,\varepsilon}^-,\rho_{k,\varepsilon}^-,X_{\rho_{k,\varepsilon}^-}-\varepsilon)]\nonumber\\
&\qquad +\sum_{k=1}^{M_t^{(\varepsilon)}}[ F(\rho_{k,\varepsilon}^+,\rho_{k,\varepsilon}^+,X_{\rho_{k,\varepsilon}^+})-F(\rho_{k,\varepsilon}^+,\rho_{k,\varepsilon}^+,X_{\rho_{k,\varepsilon}^+}-\varepsilon)]\nonumber\\
&\qquad +\sum_{k=1}^{M_t^{(\varepsilon)}-1}[ F(\rho_{k+1,\varepsilon}^-,\rho_{k+1,\varepsilon}^-,X_{\rho_{k+1,\varepsilon}^-})-F(\rho_{k,\varepsilon}^+,\rho_{k,\varepsilon}^+,X_{\rho_{k,\varepsilon}^+})]\nonumber\\
\label{eq:telescopicsumforFgttXt}
&\qquad+[F(\rho^+_{M_{t}^{(\varepsilon)},\varepsilon},t,X_t)-F(\rho_{M_{t}^{(\varepsilon)},\varepsilon}^+,\rho_{M_{t}^{(\varepsilon)},\varepsilon}^+,X_{\rho_{M_{t}^{(\varepsilon)},\varepsilon}^+})].
\end{align}
We first show that the sum of the terms on the second, fourth and last line on the right-hand side of the equation above converge in probability to the terms on the right-hand side of \eqref{eq:ItoformulaforgttXt}. 
\begin{lemma}
\label{lemma:convergenceofItoformulagttXt}
Under the assumptions of Theorem \ref{thm:ItoformulaforgttXt} and further assuming that $F$ and its derivatives are bounded, we have that  
\begin{align}
\lim_{\varepsilon \downarrow 0}&\bigg[\sum_{k=1}^{M_t^{(\varepsilon)}}[ F(\rho_{k,\varepsilon}^+,\rho_{k,\varepsilon}^+,X_{\rho_{k,\varepsilon}^+}-\varepsilon)-F(\rho_{k,\varepsilon}^-,\rho_{k,\varepsilon}^-,X_{\rho_{k,\varepsilon}^-}-\varepsilon)]\nonumber\\
&\qquad+\sum_{k=1}^{M_t^{(\varepsilon)}-1}[ F(\rho_{k+1,\varepsilon}^-,\rho_{k+1,\varepsilon}^-,X_{\rho_{k+1,\varepsilon}^-})-F(\rho_{k,\varepsilon}^+,\rho_{k,\varepsilon}^+,X_{\rho_{k,\varepsilon}^+})]\nonumber\\
&\qquad+[F(\rho^+_{M_{t}^{(\varepsilon)},\varepsilon},t,X_t)-F(\rho_{M_{t}^{(\varepsilon)},\varepsilon}^+,\rho_{M_{t}^{(\varepsilon)},\varepsilon}^+,X_{\rho_{M_{t}^{(\varepsilon)},\varepsilon}^+})]\bigg]\nonumber\\
&= \int_{0}^{t}   \frac{\partial F_g }{\partial t } (s,X_{s-})\I_{\{g_{s-}=s \}}\dd s+ \int_{0}^{t} \frac{\partial F }{\partial t} (g_{s-},s,X_{s-})\I_{\{ g_{s-}<s\}} \dd s+\int_{0}^{t}\frac{\partial F }{\partial x} (g_{s-},s,X_{s-})\dd X_s \nonumber\\
&\qquad+\frac{1}{2} \sigma^2 \int_{0}^{t} \frac{\partial^2  F}{\partial x^2} (g_{s-},s,X_{s-}) \dd s\nonumber\\
& \qquad +\int_{[0,t] } \int_{(-\infty,0)} \left[ F(s,s,X_{s-}+y) -F(s,s,X_{s-})-y\frac{\partial F }{\partial x} (s,s,X_{s-}) \right]\I_{\{g_{s-}=s \}}N(\dd s\times \dd y)\nonumber\\
&\qquad+\int_{[0,t]}\int_{(-\infty,0)} \left[F(g_{s-},s,X_{s-}+y)-F(g_{s-},s,X_{s-})-y\frac{\partial F }{\partial x} (g_{s-},s,X_{s-})\right]\nonumber\\
&\qquad\qquad\qquad \times \I_{\{X_{s-}+y >0 \}}\I_{\{ g_{s-}<s\}} N(\dd s\times \dd y) \nonumber\\
&\qquad+ \int_{[0,t]}\int_{(-\infty,0)} \left[F(s,s,X_{s-}+y)-F(g_{s-},s,X_{s-})-y\frac{\partial F }{\partial x} (g_{s-},s,X_{s-}) \right]\nonumber\\
\label{eq:Itoconvergenceinprobabilityofstochintegrals}
&\qquad\qquad\qquad \times \I_{\{X_{s-}+y \leq 0 \}} \I_{\{ g_{s-}<s\}}N(\dd s\times \dd y)
\end{align}
in probability.
\end{lemma}
 \begin{proof}
We recall that $E_g=\{ (\gamma,t,x)  : 0\leq \gamma < t \text{ and } x>0\}\cup \{ (\gamma,t,x) : 0 \leq \gamma=t \text{ and } x\leq 0\}$. We define the auxiliary function $\widetilde{F}:\R_2^+\times \R\mapsto \R$ via 
\begin{align*}
\widetilde{F}(\gamma,t,x)=\left\{\begin{array}{ll}
F(\gamma,t,x), & (\gamma,t,x)\in \bar{E}_g,\\
F(t,t,x), & (\gamma,t,x)\in (\R_2^+\times \R) \setminus \bar{E}_g.
\end{array}  \right.
\end{align*}
Note that in particular we have that $\widetilde{F}(\gamma,t,x)=F(t,t,x)$ for any $\gamma,t\geq 0$ and $x\leq 0$. Hence, since $X_{\rho_{k+1,\varepsilon}^-}\leq 0$ for all $k\leq M_t^{(\varepsilon)}-1$, we can write 
\begin{align*}
\sum_{k=1}^{M_t^{(\varepsilon)}-1}&[ F(\rho_{k+1,\varepsilon}^-,\rho_{k+1,\varepsilon}^-,X_{\rho_{k+1,\varepsilon}^-})-F(\rho_{k,\varepsilon}^+,\rho_{k,\varepsilon}^+,X_{\rho_{k,\varepsilon}^+})]\\
&\qquad+[F(\rho^+_{M_{t}^{(\varepsilon)},\varepsilon},t,X_t)-F(\rho_{M_{t}^{(\varepsilon)},\varepsilon}^+,\rho_{M_{t}^{(\varepsilon)},\varepsilon}^+,X_{\rho_{M_{t}^{(\varepsilon)},\varepsilon}^+})]\\
&=\sum_{k=1}^{M_t^{(\varepsilon)}}[ \widetilde{F}(\rho_{k,\varepsilon}^+,\rho_{k+1,\varepsilon}^- \wedge t,X_{\rho_{k+1,\varepsilon}^- \wedge t})-\widetilde{F}(\rho_{k,\varepsilon}^+,\rho_{k,\varepsilon}^+,X_{\rho_{k,\varepsilon}^+})].
\end{align*}
From the assumptions on $F$, we see that for fixed $\gamma\geq 0$, the mapping $(t,x)\mapsto \widetilde{F}(\gamma,t,x)$ is $C^{1,i}$ on $[\gamma,\infty)\times [0,\infty)$. Moreover, if $\sigma>0$, since $\lim_{h\downarrow 0} F(\gamma,t,h)=F(t,t,0)$ for all $0\leq \gamma \leq t$, we see that $ \lim_{h\downarrow 0} \widetilde{F}(\gamma,t,h)=\widetilde{F}(\gamma,t,0)$ for all $\gamma\geq 0$ and $t\geq 0$. Applying It\^o formula (see Proposition \ref{prop:stoppedItoformula}) to  $s\mapsto \widetilde{F}(\rho_{k,\varepsilon}^+,s , X_{s}) $ on time intervals of the form $(\rho_{k,\varepsilon}^+ \wedge t,\rho_{k+1,\varepsilon}^-\wedge t]$, for each $k\leq M_t^{(\varepsilon)}$, we deduce that
\begin{align*}
&\sum_{k=1}^{M_t^{(\varepsilon)}}[ \widetilde{F}(\rho_{k,\varepsilon}^+,\rho_{k+1,\varepsilon}^- \wedge t,X_{\rho_{k+1,\varepsilon}^- \wedge t})-\widetilde{F}(\rho_{k,\varepsilon}^+,\rho_{k,\varepsilon}^+,X_{\rho_{k,\varepsilon}^+})]\\
&=\sum_{k=1}^{M_t^{(\varepsilon)}} \left[ \int_{\rho_{k,\varepsilon}^+}^{\rho_{k+1,\varepsilon}^- \wedge t} \frac{\partial \widetilde{F} }{\partial t} (\rho_{k,\varepsilon}^+,s,X_{s-}) \dd s +\int_{(\rho_{k,\varepsilon}^+,\rho_{k+1,\varepsilon}^- \wedge t]} \frac{\partial \widetilde{F} }{\partial x} (\rho_{k,\varepsilon}^+,s,X_{s-}) \dd X_s \right]\\
&\qquad +\sum_{k=1}^{M_t^{(\varepsilon)}}  \frac{1}{2}\sigma^2\int_{\rho_{k,\varepsilon}^+}^{\rho_{k+1,\varepsilon}^- \wedge t} \frac{\partial^2 \widetilde{F} }{\partial x^2} (\rho_{k,\varepsilon}^+,s,X_{s-}) \dd s \\
&\qquad+\sum_{k=1}^{M_t^{(\varepsilon)}} \int_{(\rho_{k,\varepsilon}^+,\rho_{k+1,\varepsilon}^- \wedge t]} \int_{(-\infty,0)} \left[\widetilde{F}(\rho_{k,\varepsilon}^+,s,X_{s-}+y)-\widetilde{F}(\rho_{k,\varepsilon}^+,s,X_{s-}) -y\frac{\partial \widetilde{F}}{\partial x} ( \rho_{k,\varepsilon}^+,s,X_{s-}) \right]\\
&\qquad\qquad\qquad\times N(\dd s\times \dd y).
\end{align*}
From the definition of $\widetilde{F}$ and since $g_{\varepsilon,s-}<s$ if and only if $s\in (\rho_{k,\varepsilon}^+, \rho_{k+1,\varepsilon}^-]$ for some $k\geq 1$ (in this case $g_{\varepsilon,s-}=\rho_{k,\varepsilon}^+$ and $X_{s-}^{(\varepsilon)}=X_{s-}$), we have that
\begin{align*}
&\sum_{k=1}^{M_t^{(\varepsilon)}}[ \widetilde{F}(\rho_{k,\varepsilon}^+,\rho_{k+1,\varepsilon}^- \wedge t,X_{\rho_{k+1,\varepsilon}^- \wedge t})-\widetilde{F}(\rho_{k,\varepsilon}^+,\rho_{k,\varepsilon}^+,X_{\rho_{k,\varepsilon}^+})]\\
&= \int_{0}^{t} \frac{\partial F }{\partial t} (g_{\varepsilon,s-},s,X_{s-}^{(\varepsilon)})\I_{\{ g_{\varepsilon,s-}<s\}} \dd s +\int_{0}^{t} \frac{\partial F }{\partial x} (g_{\varepsilon,s-},s,X_{s-}^{(\varepsilon)})\I_{\{ g_{\varepsilon,s-}<s \}} \dd X_s \\
&\qquad + \frac{1}{2}\sigma^2\int_{0}^{t} \frac{\partial^2 F }{\partial x^2} (g_{\varepsilon,s-},s,X_{s-}^{(\varepsilon)})\I_{\{ g_{\varepsilon,s-}<s \}} \dd s \\
&\qquad+\int_{[0,t]}\int_{(-\infty,0)} \left[F(g_{\varepsilon,s-},s,X_{s-}^{(\varepsilon)}+y)-F(g_{\varepsilon,s-},s,X_{s-}^{(\varepsilon)})-y\frac{\partial F }{\partial x} (g_{\varepsilon,s-},s,X_{s-}^{(\varepsilon)})\right]\\
&\qquad\qquad\qquad \times \I_{\{X_{s-}^{(\varepsilon)}+y >0 \}}\I_{\{ g_{\varepsilon,s-}<s\}} N(\dd s\times \dd y) \\
&\qquad+ \int_{[0,t]}\int_{(-\infty,0)} \left[F(s,s,X_{s-}^{(\varepsilon)}+y)-F(g_{\varepsilon,s-},s,X_{s-}^{(\varepsilon)})-y\frac{\partial F }{\partial x} (g_{\varepsilon,s-},s,X_{s-}^{(\varepsilon)}) \right]\\
&\qquad\qquad\qquad \times \I_{\{X_{s-}^{(\varepsilon)}+y \leq 0 \}} \I_{\{ g_{\varepsilon,s-}<s\}}N(\dd s\times \dd y).
\end{align*}
Similarly, since $(t,x)\mapsto F(t,t,x)$ is $C^{1,i}$ on $[0,\infty)\times (-\infty,0]$ and $F$ is continuous on $E_g$, we can apply Proposition \ref{prop:stoppedItoformula} on intervals of the form $(\rho_{k,\varepsilon}^-,\rho_{k,\varepsilon}^+\wedge t]=(\rho_{k,\varepsilon}^-,\rho_{k,\varepsilon}^+]$, upon noticing that $X_s-\varepsilon\leq 0$ for all $s\in (\rho_{k,\varepsilon}^-,\rho_{k,\varepsilon}^+]$, to see that 

\begin{align*}
&\sum_{k=1}^{M_t^{(\varepsilon)}}[ F(\rho_{k,\varepsilon}^+,\rho_{k,\varepsilon}^+,X_{\rho_{k,\varepsilon}^+}-\varepsilon)-F(\rho_{k,\varepsilon}^-,\rho_{k,\varepsilon}^-,X_{\rho_{k,\varepsilon}^-}-\varepsilon)]\\
&=\sum_{k=1}^{M_t^{(\varepsilon)}}\left[ \int_{\rho_{k,\varepsilon}^-}^{\rho_{k,\varepsilon}^+ }   \frac{\partial F_g }{\partial t } (s,X_{s-}-\varepsilon)\dd s+\int_{(\rho_{k,\varepsilon}^-,\rho_{k,\varepsilon}^+] }\frac{\partial F }{\partial x} (s,s,X_{s-}-\varepsilon)\dd X_s +\frac{1}{2} \sigma^2 \int_{\rho_{k,\varepsilon}^-}^{\rho_{k,\varepsilon}^+ } \frac{\partial^2  F}{\partial x^2} (s,s,X_{s-}-\varepsilon)\dd s\right]\\
& \qquad +\sum_{k=1}^{M_t^{(\varepsilon)}}\int_{(\rho_{k,\varepsilon}^-,\rho_{k,\varepsilon}^+] } \int_{(-\infty,0)} \left[ F(s,s,X_{s-}+y-\varepsilon) -F(s,s,X_{s-}-\varepsilon)-y\frac{\partial F }{\partial x} (s,s,X_{s-}-\varepsilon) \right]N(\dd s\times \dd y)\\
&= \int_{0}^{t}   \frac{\partial F_g }{\partial t } (s,X_{s-}^{(\varepsilon)})\I_{\{g_{\varepsilon,s-}=s \}}\dd s+\int_{0}^{t}\frac{\partial F }{\partial x} (s,s,X_{s-}^{(\varepsilon)})\I_{\{g_{\varepsilon,s-}=s \}}\dd X_s \\
&\qquad+\frac{1}{2} \sigma^2 \int_{0}^{t} \frac{\partial^2  F}{\partial x^2} (s,s,X_{s-}^{(\varepsilon)}) \I_{\{g_{\varepsilon,s-}=s \}}\dd s\\
& \qquad +\int_{[0,t] } \int_{(-\infty,0)} \left[ F(s,s,X_{s-}^{(\varepsilon)}+y) -F(s,s,X_{s-}^{(\varepsilon)})-y\frac{\partial F }{\partial x} (s,s,X_{s-}^{(\varepsilon)}) \right] \I_{\{g_{\varepsilon,s-}=s \}}N(\dd s\times \dd y),
\end{align*}
where we recall that $F_g(t,x)=F(t,t,x)$ for $t\geq 0$ and $x\leq 0$. Therefore, we conclude that 
\begin{align*}
&\sum_{k=1}^{M_t^{(\varepsilon)}}[ F(\rho_{k,\varepsilon}^+,\rho_{k,\varepsilon}^+,X_{\rho_{k,\varepsilon}^+}-\varepsilon)-F(\rho_{k,\varepsilon}^-,\rho_{k,\varepsilon}^-,X_{\rho_{k,\varepsilon}^-}-\varepsilon)]\\
&\qquad+\sum_{k=1}^{M_t^{(\varepsilon)}-1}[ F(\rho_{k+1,\varepsilon}^-,\rho_{k+1,\varepsilon}^-,X_{\rho_{k+1,\varepsilon}^-})-F(\rho_{k,\varepsilon}^+,\rho_{k,\varepsilon}^+,X_{\rho_{k,\varepsilon}^+})]\\
&\qquad+[F(\rho_{M_{t}^{(\varepsilon)}},t,X_t)-F(\rho_{M_{t}^{(\varepsilon)},\varepsilon}^+,\rho_{M_{t}^{(\varepsilon)},\varepsilon}^+,X_{\rho_{M_{t}^{(\varepsilon)},\varepsilon}^+})]\\
&= \int_{0}^{t}   \frac{\partial F_g }{\partial t } (s,X_{s-}^{(\varepsilon)})\I_{\{g_{\varepsilon,s-}=s \}}\dd s+ \int_{0}^{t} \frac{\partial F }{\partial t} (g_{\varepsilon,s-},s,X_{s-}^{(\varepsilon)})\I_{\{ g_{\varepsilon,s-}<s\}} \dd s+\int_{0}^{t}\frac{\partial F }{\partial x} (g_{\varepsilon,s-},s,X_{s-}^{(\varepsilon)})\dd X_s \\
&\qquad+\frac{1}{2} \sigma^2 \int_{0}^{t} \frac{\partial^2  F}{\partial x^2} (g_{\varepsilon,s-},s,X_{s-}^{(\varepsilon)}) \dd s\\
& \qquad +\int_{[0,t] } \int_{(-\infty,0)} \left[ F(s,s,X_{s-}^{(\varepsilon)}+y) -F(s,s,X_{s-}^{(\varepsilon)})-y\frac{\partial F }{\partial x} (s,s,X_{s-}^{(\varepsilon)}) \right]\I_{\{g_{\varepsilon,s-}=s \}}N(\dd s\times \dd y)\\
&\qquad+\int_{[0,t]}\int_{(-\infty,0)} \left[F(g_{\varepsilon,s-},s,X_{s-}^{(\varepsilon)}+y)-F(g_{\varepsilon,s-},s,X_{s-}^{(\varepsilon)})-y\frac{\partial F }{\partial x} (g_{\varepsilon,s-},s,X_{s-}^{(\varepsilon)})\right]\\
&\qquad\qquad\qquad \times \I_{\{X_{s-}^{(\varepsilon)}+y >0 \}}\I_{\{ g_{\varepsilon,s-}<s\}} N(\dd s\times \dd y) \\
&\qquad+ \int_{[0,t]}\int_{(-\infty,0)} \left[F(s,s,X_{s-}^{(\varepsilon)}+y)-F(g_{\varepsilon,s-},s,X_{s-}^{(\varepsilon)})-y\frac{\partial F }{\partial x} (g_{\varepsilon,s-},s,X_{s-}^{(\varepsilon)}) \right]\\
&\qquad\qquad\qquad \times \I_{\{X_{s-}^{(\varepsilon)}+y \leq 0 \}} \I_{\{ g_{\varepsilon,s-}<s\}}N(\dd s\times \dd y).
\end{align*}
Hence, since we are assuming that $F$ and its derivatives are continuous and bounded, by using the mean value theorem when $X$ is of finite variation, Taylor's theorem when $X$ is of infinite variation, and the dominated convergence theorem for stochastic integrals (see, for example, Theorem IV.15 and Theorem IV.32 in \cite{protter2005} on pp. 166,176, respectively) we conclude that \eqref{eq:Itoconvergenceinprobabilityofstochintegrals} indeed holds.
\end{proof}
Next, to complete the proof of Theorem \ref{thm:ItoformulaforgttXt}, we show that the existence of a sequence such that the remaining terms on the right-hand side of \eqref{eq:telescopicsumforFgttXt} converge to zero almost surely.  
\begin{lemma}
\label{lemma:convergenceofdowncrossingslocaltime}
Under the assumptions of Theorem \ref{thm:ItoformulaforgttXt} and further assuming that $F$ and its derivatives are bounded and that $\Pi(-\infty,0)<\infty$, we have that there exists a sequence $\{ \varepsilon_n, n\geq 0\}$ converging to zero such that 
\begin{align*}
\lim_{n\rightarrow\infty}\bigg[\sum_{k=1}^{M_t^{(\varepsilon_n)}}&[ F(\rho_{k,\varepsilon_n}^-,\rho_{k,\varepsilon_n}^-,X_{\rho_{k,\varepsilon_n}^-}-\varepsilon_n)-F(\rho_{k,\varepsilon_n}^-,\rho_{k,\varepsilon_n}^-,X_{\rho_{k,\varepsilon_n}^-})]\\
&\qquad +\sum_{k=1}^{M_t^{(\varepsilon_n)}}[ F(\rho_{k,\varepsilon_n}^+,\rho_{k,\varepsilon_n}^+,X_{\rho_{k,\varepsilon_n}^+})-F(\rho_{k,\varepsilon_n}^+,\rho_{k,\varepsilon_n}^+,X_{\rho_{k,\varepsilon_n}^+}-\varepsilon_n)]\bigg] =0
\end{align*}
almost surely.
\end{lemma}

\begin{proof}
For any $s>0$ and $\varepsilon>0$, we define the stopping time
\begin{align*}
\tau_{\varepsilon,s}^+:=\inf\{ t\geq s: X_t\geq \varepsilon\}.
\end{align*}
By using that $\rho_{1,\varepsilon}^-=0$ (by definition), that $N$ is the Poisson random measure of the jumps of $X$ and by rearranging some of the terms, we see that
\begin{align*}
\sum_{k=1}^{M_t^{(\varepsilon)}}&[ F(\rho_{k,\varepsilon}^-,\rho_{k,\varepsilon}^-,X_{\rho_{k,\varepsilon}^-}-\varepsilon)-F(\rho_{k,\varepsilon}^-,\rho_{k,\varepsilon}^-,X_{\rho_{k,\varepsilon}^-})] +\sum_{k=1}^{M_t^{(\varepsilon)}}[ F(\rho_{k,\varepsilon}^+,\rho_{k,\varepsilon}^+,X_{\rho_{k,\varepsilon}^+})-F(\rho_{k,\varepsilon}^+,\rho_{k,\varepsilon}^+,X_{\rho_{k,\varepsilon}^+}-\varepsilon)] \\
&= F(0,0,X_0-\varepsilon)-F(0,0,X_0)+F(\rho_{1,\varepsilon}^+,\rho_{1,\varepsilon}^+,\varepsilon)-F(\rho_{1,\varepsilon}^+,\rho_{1,\varepsilon}^+,0) \\
&\qquad+\sum_{k=2}^{M_t^{(\varepsilon)}}[ F(\rho_{k,\varepsilon}^-,\rho_{k,\varepsilon}^-,X_{\rho_{k,\varepsilon}^-}-\varepsilon)-F(\rho_{k,\varepsilon}^-,\rho_{k,\varepsilon}^-,X_{\rho_{k,\varepsilon}^-})] \I_{\{ X_{\rho_{k,\varepsilon}^-}<0\}}\\
&\qquad +\sum_{k=2}^{M_t^{(\varepsilon)}}[ F(\rho_{k,\varepsilon}^-,\rho_{k,\varepsilon}^-,-\varepsilon)-F(\rho_{k,\varepsilon}^-,\rho_{k,\varepsilon}^-,0)] \I_{\{ X_{\rho_{k,\varepsilon}^-}=0\}}\\
&\qquad +\sum_{k=2}^{M_t^{(\varepsilon)}}[ F(\rho_{k,\varepsilon}^+,\rho_{k,\varepsilon}^+,\varepsilon)-F(\rho_{k,\varepsilon}^+,\rho_{k,\varepsilon}^+,0)] \I_{\{ X_{\rho_{k,\varepsilon}^-}<0\}}+[ F(\rho_{k,\varepsilon}^+,\rho_{k,\varepsilon}^+,\varepsilon)-F(\rho_{k,\varepsilon}^+,\rho_{k,\varepsilon}^+,0)]\I_{\{ X_{\rho_{k,\varepsilon}^-}=0\}}\\
&=F(0,0,X_0-\varepsilon)-F(0,0,X_0)+F(\rho_{1,\varepsilon}^+,\rho_{1,\varepsilon}^+,\varepsilon)-F(\rho_{1,\varepsilon}^+,\rho_{1,\varepsilon}^+,0)\\
&\qquad +\int_{[0,t]} \int_{(-\infty,0)}[ F(s,s,X_{s-}+y-\varepsilon)-F(s,s,X_{s-}+y)] \I_{\{ X^{(\varepsilon)}_{s-}+y<0\}} \I_{\{ X_{s-}^{(\varepsilon)}>0\}} N(\dd s \times \dd y)\\
&\qquad +\int_{[0,t]} \int_{(-\infty,0)}[ F(\tau_{\varepsilon,s}^+,\tau_{\varepsilon,s}^+,\varepsilon)-F(\tau_{\varepsilon,s}^+,\tau_{\varepsilon,s}^+,0)] \I_{\{ X^{(\varepsilon)}_{s-}+y<0\}} \I_{\{ X_{s-}^{(\varepsilon)}>0\}} N(\dd s \times \dd y)\\
&\qquad +\int_0^t[ F(s,s,-\varepsilon)-F(s,s,0)+  F(\tau_{\varepsilon,s}^+,\tau_{\varepsilon,s}^+,\varepsilon)-F(\tau_{\varepsilon,s}^+,\tau_{\varepsilon,s}^+,0)] \I_{\{ X_{s}=0\}}\dd M_s^{(\varepsilon)}.
\end{align*}
We then show that the integrals with respect to the Poisson random measure $N$ converge to zero. Since $X_s^{(\varepsilon)}\leq X_{s}$ for all $s>0$, and on the event $\{ X_{s-}^{(\varepsilon)}>0 \}$, we have that $X_{s-}^{(\varepsilon)}=X_{s-}$ we see that
\begin{align*}
&\left|\int_{[0,t]} \int_{(-\infty,0)}[ F(s,s,X_{s-}+y-\varepsilon)-F(s,s,X_{s-}+y)] \I_{\{ X^{(\varepsilon)}_{s-}+y<0\}} \I_{\{ X_{s-}^{(\varepsilon)}>0\}} N(\dd s \times \dd y)\right|\\
&\qquad\leq  \int_{[0,t]} \int_{(-\infty,0)}\left| F(s,s,X_{s-}+y-\varepsilon)-F(s,s,X_{s-}+y)\right|\I_{\{ X^{(\varepsilon)}_{s-}+y<0\}} \I_{\{ X_{s-}^{(\varepsilon)}>0\}} N(\dd s \times \dd y)\\
&\qquad\leq  2M\int_{[0,t]} \int_{(-\infty,0)}\I_{\{ X_{s-}+y<0\}} \I_{\{ X_{s-}>0\}} N(\dd s \times \dd y)\\
&\qquad<\infty,
\end{align*}
where we used that $F$ is bounded, say, by a constant $M>0$ and that $\Pi(-\infty,0)<\infty$ so the last integral is finite. Hence, by the dominated convergence theorem and the continuity of $F$, we have that 
\begin{align*}
\lim_{\varepsilon\downarrow 0}\int_{[0,t]} \int_{(-\infty,0)}[ F(s,s,X_{s-}+y-\varepsilon)-F(s,s,X_{s-}+y)] \I_{\{ X^{(\varepsilon)}_{s-}+y<0\}} \I_{\{ X_{s-}^{(\varepsilon)}>0\}} N(\dd s \times \dd y)=0
\end{align*}
almost surely. Similarly, we can see that 
\begin{align*}
\lim_{\varepsilon \downarrow 0}\int_{[0,t]} \int_{(-\infty,0)}[ F(\tau_{\varepsilon,s}^+,\tau_{\varepsilon,s}^+,\varepsilon)-F(\tau_{\varepsilon,s}^+,\tau_{\varepsilon,s}^+,0)] \I_{\{ X^{(\varepsilon)}_{s-}+y<0\}} \I_{\{ X_{s-}^{(\varepsilon)}>0\}} N(\dd s \times \dd y)=0
\end{align*}
almost surely. On the other hand, since we are assuming that $x\mapsto F(t,t,x)$ is differentiable on $[0,\infty)$ and on $(-\infty,0]$ we have that 
\begin{align*}
&\left|\int_0^t[ F(s,s,-\varepsilon)-F(s,s,0)+  F(\tau_{\varepsilon,s}^+,\tau_{\varepsilon,s}^+,\varepsilon)-F(\tau_{\varepsilon,s}^+,\tau_{\varepsilon,s}^+,0)] \I_{\{ X_{s}=0\}}\dd M_s^{(\varepsilon)} \right|\\
&\qquad\leq \int_0^t\int_0^{\varepsilon} \left|\frac{\partial F}{\partial x} (\tau_{\varepsilon,s}^+,\tau_{\varepsilon,s}^+,y)-\frac{\partial F}{\partial x}(s,s,-y) \right|\I_{\{ X_{s}=0\}} \dd y  \dd M_s^{(\varepsilon)} \\
& \qquad \leq \varepsilon ( M_{t}^{(\varepsilon)}-1) \sup_{(s,y)\in [0,t]\times [0,\varepsilon]}\left\{ \left|\frac{\partial F}{\partial x} (\tau_{\varepsilon,s}^+,\tau_{\varepsilon,s}^+,y)-\frac{\partial F}{\partial x}(s,s,-y) \right|\I_{\{ X_{s}=0\}} \right\}.
\end{align*}
Note that $X$ creeps downwards only when $\sigma>0$, so then, the first term above vanishes when $\sigma=0$. Henceforth, we now assume that $\sigma>0$. From Lemma \ref{lemma:Mtlocaltimeconvergence} we know that $\varepsilon M_t^{(\varepsilon)}$ converges to $L_t/2$ in probability when $\varepsilon\downarrow 0$. Hence, there exists a subsequence $\{\varepsilon_n,n\geq 0 \}$ convergent to zero such that $\varepsilon_n M_t^{(\varepsilon_n)}$ converges to $L_t/2$, almost surely. Thus, we see that 
\begin{align*}
\lim_{n\rightarrow \infty} &\left|\int_0^t[ F(s,s,-\varepsilon_n)-F(s,s,0)+  F(\tau_{\varepsilon_n,s}^+,\tau_{\varepsilon_n,s}^+,\varepsilon_n)-F(\tau_{\varepsilon_n,s}^+,\tau_{\varepsilon_n,s}^+,0)] \I_{\{ X_{s}=0\}}\dd M_s^{(\varepsilon)} \right|\\
& \leq \lim_{n\rightarrow \infty} \varepsilon_n ( M_{t}^{(\varepsilon_n)}-1) \sup_{(s,y)\in [0,t]\times [0,\varepsilon_n]}\left\{ \left|\frac{\partial F}{\partial x} (\tau_{\varepsilon_n,s}^+,\tau_{\varepsilon_n,s}^+,y)-\frac{\partial F}{\partial x}(s,s,-y) \right|\I_{\{ X_{s}=0\}}\right\}\\
&=0
\end{align*}
almost surely, where we used that for each $s>0$, on the event $\{X_s=0 \}$, we have $\lim_{\varepsilon \downarrow 0} \tau_{\varepsilon,s}^+=s$ almost surely, and that $(s,y)\mapsto \frac{\partial F}{\partial x}(s,s,y)$ is continuous in $[0,\infty)\times \R$ when $\sigma>0$. Indeed, assumptions $i)$ and $ii)$ imply that $F(t,t,x)=F_1(t,x)$ for every $t\geq 0$ and $x> 0$ and $F(t,t,x)=F_2(t,x)$ for $t\geq 0$ and $x<0$, where $F_1$ and $F_2$ are some $C^{1,1}$ functions on $[0,\infty)\times \R$. Moreover, from the above facts and the mean value theorem we have that for any $h>0$, there exists a value $c_h\in (0,h)$ such that
\begin{align*}
\lim_{h\downarrow 0}\frac{F(t,t,h)-F(t,t,0)}{h}=\lim_{h\downarrow 0}\frac{\partial F}{\partial x}(t,t,c_h)=\frac{\partial F}{\partial x} F(t,t,0+)
\end{align*}
for any $t\geq 0$. Similarly, 
\begin{align*}
\lim_{h\downarrow 0}\frac{F(t,t,-h)-F(t,t,0)}{-h}=\frac{\partial F}{\partial x} F(t,t,0-).
\end{align*}
Thus, from assumption \eqref{eq:pastingatzeroItoformula},  we have that $\frac{\partial F}{\partial x}(t,t,0)$ exists for all $t\geq 0$. It follows that
\begin{align*}
\frac{\partial F_1}{\partial x}(t,0)=\frac{\partial F}{\partial x} (t,t,0+) =\frac{\partial F}{\partial x} (t,t,0)=\frac{\partial F}{\partial x} (t,t,0-)=\frac{\partial F_2}{\partial x}(t,0)
\end{align*}
for all $t\geq 0$. Then, we can write 
\begin{align*}
\frac{\partial F}{\partial x}F(t,t,x)&=\I_{\{x\geq 0 \}}\frac{\partial F_1}{\partial x}(t,x)+\I_{\{x< 0 \}}\frac{\partial F_2}{\partial x}(t,x)\\
&=\I_{\{x> 0 \}}\frac{\partial F_1}{\partial x}(t,x)+\I_{\{x\leq 0 \}}\frac{\partial F_2}{\partial x}(t,x)
\end{align*}
for any $t\geq 0$ and $x\in \R$. Therefore, from the equation above it can be easily seen that $(s,y)\mapsto \frac{\partial F}{\partial x}(s,s,y)$ is continuous in $[0,\infty)\times \R$ as claimed. The conclusion holds. 
\end{proof}

Hence, when $\Pi(-\infty,0)<\infty$, we deduce that there exists a subsequence $\{\varepsilon'_{n}, n\geq 0 \}$, convergent to zero such that the limits in Lemmas \ref{lemma:convergenceofItoformulagttXt}-\ref{lemma:convergenceofdowncrossingslocaltime} hold almost surely. Thus, following a similar argument as in \cite{applebaum_2009} (see proof of Theorem 4.4.7 on p. 226), from \eqref{eq:telescopicsumforFgttXt} along the subsequence $\{\varepsilon'_{n}, n\geq 0 \}$ and taking $n\rightarrow \infty$ we deduce, when $F$ and its derivatives are bounded, that
\begin{align*}
F(g_{t}, t,X_t)
&=F(g_0,0,X_0)+ \int_{0}^{t}   \frac{\partial F_g }{\partial t } (s,X_{s-})\I_{\{g_{s-}=s \}}\dd s+ \int_{0}^{t} \frac{\partial F }{\partial t} (g_{s-},s,X_{s-})\I_{\{ g_{s-}<s\}} \dd s \\
&\qquad+\int_{0}^{t}\frac{\partial F }{\partial x} (g_{s-},s,X_{s-})\dd X_s+\frac{1}{2} \sigma^2 \int_{0}^{t} \frac{\partial^2  F}{\partial x^2} (g_{s-},s,X_{s-}) \dd s\\
& \qquad +\int_{[0,t] } \int_{(-\infty,0)} \left[ F(s,s,X_{s-}+y) -F(s,s,X_{s-})-y\frac{\partial F }{\partial x} (s,s,X_{s-}) \right]\I_{\{g_{s-}=s \}}N(\dd s\times \dd y)\\
&\qquad+\int_{[0,t]}\int_{(-\infty,0)} \left[F(g_{s-},s,X_{s-}+y)-F(g_{s-},s,X_{s-})-y\frac{\partial F }{\partial x} (g_{s-},s,X_{s-})\right]\\
&\qquad\qquad\qquad \times \I_{\{X_{s-}+y >0 \}}\I_{\{ g_{s-}<s\}} N(\dd s\times \dd y) \\
&\qquad+ \int_{[0,t]}\int_{(-\infty,0)} \left[F(s,s,X_{s-}+y)-F(g_{s-},s,X_{s-})-y\frac{\partial F }{\partial x} (g_{s-},s,X_{s-}) \right]\\
&\qquad\qquad\qquad \times \I_{\{X_{s-}+y \leq 0 \}} \I_{\{ g_{s-}<s\}}N(\dd s\times \dd y),
\end{align*}
with probability one. For the case when $\Pi$ is any L\'evy measure we know from Corollary 4.3.10 in \cite{applebaum_2009} that there exists a sequence $\{ A_n,n\geq 1\}$ with $\Pi(A_n)<\infty$, for each $n\geq 1$, and $A_n \uparrow (-1,0)$
when $n\rightarrow \infty$, such that $\lim_{n\rightarrow } X_{t}^{(n)}=X_t$ uniformly on compacts in probability, where
\begin{align*}
X_t^{(n)}=\sigma B_t-\mu t+\int_{[0,t]}\int_{(-\infty,-1)} x N(\dd s\times \dd x)+\int_{[0,t]}\int_{(-1,0)\cap A_n} x( N(\dd s\times \dd x)-\dd s\Pi(\dd x)).
\end{align*}
Hence, the result above is valid for the process $X^{(n)}=\{X_t^{(n)},t\geq 0\} $ and, by taking $n\rightarrow \infty$ along a subsequence for which $X_t^{(n)}$ converges to $X_t$ almost surely, the result follows for the process $X$. The general case, when $F$ and its derivatives are not necessarily bounded, follows by a standard stopping-time argument. Moreover,
From the fact that $g_t$ is continuous on the set $\{ t\geq 0: g_{t-}=t \text{ or } g_{t-}<t \text{ and } X_t>0\}$, we obtain the first equality in \eqref{eq:ItoformulaforgttXt}. Lastly, the case when $X_t\leq 0$, is similar and the proof is omitted.

\subsubsection{Proof of Theorem \ref{thm:integralofgr}}
We recall that for any $\varepsilon>0$ and $r\geq 0$,
\begin{align*}
U_{\varepsilon,r}=r-g_{\varepsilon,r}=r-\sup\{0\leq s\leq r: X^{(\varepsilon)}_s\leq 0 \}.
\end{align*}
First, note that, since $|K(U_s,X_s)|\leq C(U_s,X_s)$ for all $s\geq 0$ and $\E_{u,x}\left(\int_0^{\infty} e^{-qr} C(U_r,X_r+y) \dd r\right)<\infty$ for all $(u,x)\in E$ and $y\in \R$, we have that $K^+$ and $K^-$ are finite. Moreover, since $u\mapsto C(u,x)$ is monotone for all $x\in \R$ and non-negative, we have that for all $r\geq 0$ and $\varepsilon>0$,
\begin{align*}
 |K(U_{\varepsilon,r},X_r^{(\varepsilon)})|\leq C(U_{\varepsilon,r},X_r^{(\varepsilon)})\leq C(U_r,X_r)+C(U_r,X_r-\varepsilon)+C(U_r^{(\varepsilon)},X_r)+C(U_r^{(\varepsilon)},X_r-\varepsilon),
\end{align*}
where  $U_r^{(\varepsilon)}=r-g_t^{(\varepsilon)}=r-\sup\{0\leq s\leq r: X_s \leq \varepsilon \}$ and we used that $U_t\geq U_{\varepsilon,t} \geq U_t^{(\varepsilon)}$, for all $t\geq 0$. It follows from integrability of $e^{-qr}C(U_r,X_r+y)$ with respect to the product measure $\P_{u,x} \times \dd r$, for all $(u,x)\in E$, by dominated convergence theorem and left-continuity in each argument of $K$ that for $x\leq 0$,
\begin{align*}
\E_{x}\left(\int_0^{\infty}e^{-qr} K(U_{r},X_r) \dd r \right)=\lim_{\varepsilon \downarrow 0} \E_{x}\left(\int_0^{\infty}e^{-qr} K(U_{\varepsilon,r},X_r^{(\varepsilon)}) \dd r \right),
\end{align*}
where we note that for $x\leq \varepsilon$, $X^{(\varepsilon)}_0=X_0-\varepsilon=x-\varepsilon$ and $U_{\varepsilon,0}=0$, under $\P_x$. Then we calculate the right-hand side of the equation above. We define the auxiliary function 
\begin{align*}
J^{(\varepsilon)}(x):=\E_x\left(\int_0^{\infty}e^{-qr} K(U_{\varepsilon,r},X_r^{(\varepsilon)}) \dd r \right), \qquad x\leq \varepsilon.
\end{align*}
Fix $\varepsilon>0$ and take any $x\leq \varepsilon$. Then, we can write 
\begin{align}
J^{(\varepsilon)}(x)
&=\E_x\left(\int_0^{\rho_{1,\varepsilon}^+} e^{-qr} K(0,X_r-\varepsilon)\dd r \right)+\E_x\left(\I_{\{\rho_{1,\varepsilon}^+<\infty \}}\int_{\rho_{1,\varepsilon}^+}^{\infty}e^{-qr} K(U_{\varepsilon,r},X_r^{(\varepsilon)}) \dd r \right)\nonumber\\
\label{eq:findingJ(x)}
&=K^-(x-\varepsilon)+\int_{0}^{\infty} \E_x\left(\I_{\{\rho_{1,\varepsilon}^+<\infty \}}e^{-q(r+\rho_{1,\varepsilon}^+)} K(U_{\varepsilon,r+\rho_{1,\varepsilon}^+},X_{r+\rho_{1,\varepsilon}^+}^{(\varepsilon)})\right)  \dd r ,
\end{align}
where we recall that $K^-$ is given in \eqref{eq:definitionofK-}, and the last equality follows by Fubini's theorem. By using that $g_{\varepsilon,t}=t$, when $t\in [\rho_{k,\varepsilon}^-,\rho_{k,\varepsilon}^+]$, and $g_{\varepsilon,t}=\rho_{k,\varepsilon}^+$, when $t\in [\rho_{k,\varepsilon}^+,\rho_{k+1,\varepsilon}^-)$, for some $k\geq 1$, we obtain that for any $r> 0$,
\begin{align*}
\E_x&\left(\I_{\{\rho_{1,\varepsilon}^+<\infty \}}e^{-q(r+\rho_{1,\varepsilon}^+)} K(U_{\varepsilon,r+\rho_{1,\varepsilon}^+},X_{r+\rho_{1,\varepsilon}^+}^{(\varepsilon)})\right)\\
&=\sum_{k=1}^{\infty}\E_x\left(\I_{\{\rho_{1,\varepsilon}^+<\infty \}}e^{-q(r+\rho_{1,\varepsilon}^+)} K(r+\rho_{1,\varepsilon}^+-\rho_{k,\varepsilon}^+,X_{r+\rho_{1,\varepsilon}^+}) \I_{\{\rho_{k,\varepsilon}^+ \leq r+\rho_{1,\varepsilon}^+ < \rho_{k+1,\varepsilon}^-\}}\right)\\
&\qquad+\sum_{k=1}^{\infty}\E_x\left(\I_{\{\rho_{1,\varepsilon}^+<\infty \}} e^{-q(r+\rho_{1,\varepsilon}^+)} K(0,X_{r+\rho_{1,\varepsilon}^+}-\varepsilon) \I_{\{\rho_{k+1,\varepsilon}^- \leq r+\rho_{1,\varepsilon}^+ < \rho_{k+1,\varepsilon}^+\}}\right)\\
&=\E_x\left(\I_{\{\rho_{1,\varepsilon}^+<\infty \}}e^{-q \rho_{1,\varepsilon}^+} \right) \sum_{k=1}^{\infty} \E_{\varepsilon}\left( e^{-qr}   K(r-\rho_{k,\varepsilon}^+,X_{r}) \I_{\{\rho_{k,\varepsilon}^+ \leq r < \rho_{k+1,\varepsilon}^-\}}\right)\\
&\qquad+\E_x\left(\I_{\{\rho_{1,\varepsilon}^+<\infty \}} e^{-q\rho_{1,\varepsilon}^+} \right) \sum_{k=1}^{\infty} \E_{\varepsilon}\left( e^{-qr} K(0,X_{r}-\varepsilon) \I_{\{\rho_{k+1,\varepsilon}^- \leq r < \rho_{k+1,\varepsilon}^+\}}\right)\\
&=e^{-\Phi(q) (\varepsilon-x)}  \E_{\varepsilon}\left( e^{-qr} K(U_{\varepsilon,r},X_{r}^{(\varepsilon)}) \right),
\end{align*}
where the second equality follows by applying the strong Markov property at time $\rho_{1,\varepsilon}^+$ and the last from \eqref{eq:laplacetransformtau0} and the definition of $U_{\varepsilon,r}$ and $X^{(\varepsilon)}_r$. Hence, substituting the expression above into \eqref{eq:findingJ(x)} we deduce that for any $x\leq \varepsilon$,
\begin{align}
J^{(\varepsilon)}(x)&=K^-(x-\varepsilon)+e^{-\Phi(q) (\varepsilon-x)} \int_{0}^{\infty} \E_{\varepsilon}\left( e^{-qr} K(U_{\varepsilon,r},X_{r}^{(\varepsilon)}) \right) \dd r\nonumber\\
\label{eq:renewalequationforJ(x)}
&=K^-(x-\varepsilon)+e^{-\Phi(q) (\varepsilon-x)} J^{(\varepsilon)}(\varepsilon).
\end{align}
On the other hand, using a similar argument and that $\rho_{2,\varepsilon}^-=\tau_0^-$ under $\P_{\varepsilon}$, we obtain that 
\begin{align*}
J^{(\varepsilon)}(\varepsilon)
&=K^+(0,\varepsilon)+\E_{\varepsilon}\left( \I_{\{ \tau_0^-<\infty\}}e^{-q\tau_0^-} J^{(\varepsilon)}( X_{\tau_0^-})  \right)\\
&=K^+(0,\varepsilon)+\E_{\varepsilon}\left( \I_{\{ \tau_0^-<\infty\}}e^{-q\tau_0^-} K^-(X_{\tau_0^-}-\varepsilon)\right)+\E_{\varepsilon}\left( \I_{\{ \tau_0^-<\infty\}}e^{-q\tau_0^-}e^{-\Phi(q) (\varepsilon-X_{\tau_0^-})}   \right)J^{(\varepsilon)}(\varepsilon)\\
&=K^+(0,\varepsilon)+\E_{\varepsilon}\left( \I_{\{ \tau_0^-<\infty\}}e^{-q\tau_0^-} K^-(X_{\tau_0^-}-\varepsilon)\right)+\mathcal{I}^{(q,\Phi(q))}(\varepsilon)J^{(\varepsilon)}(\varepsilon),
\end{align*}
where the second equality follows by \eqref{eq:renewalequationforJ(x)} and that $X_{\tau_0^-}\leq 0$, on the event $\{\tau_0^-<\infty \}$, and the last equality follows from \eqref{eq:jointlaplacetransformtau0-Xtau0-}. Hence, solving for $J^{(\varepsilon)}(\varepsilon)$ in the expression above we obtain that 
\begin{align*}
J^{(\varepsilon)}(\varepsilon) 
&=\frac{1}{1-\mathcal{I}^{(q,\Phi(q))}(\varepsilon) }\left[ K^+(0,\varepsilon)+\E_{\varepsilon}\left( \I_{\{ \tau_0^-<\infty\}}e^{-q\tau_0^-} K^-(X_{\tau_0^-}-\varepsilon)\right)\right].
\end{align*}
If we further substitute the expression found for $J^{(\varepsilon)}(\varepsilon)$ into \eqref{eq:renewalequationforJ(x)}, we deduce that for any $x\leq \varepsilon$,
\begin{align*}
J^{(\varepsilon)}(x)= K^-(x-\varepsilon)+ \frac{e^{-\Phi(q) (\varepsilon-x)}}{1-\mathcal{I}^{(q,\Phi(q))}(\varepsilon) }\left[ K^+(0,\varepsilon)+\E_{\varepsilon}\left( \I_{\{ \tau_0^-<\infty\}}e^{-q\tau_0^-} K^-(X_{\tau_0^-}-\varepsilon)\right)\right].
\end{align*}
Therefore, by the dominated convergence theorem we have that for all $x\leq 0$,
\begin{align*}
\E_x&\left(\int_0^{\infty}e^{-qr} K(U_r,X_r) \dd r \right)\\
&=  \lim_{\varepsilon \downarrow 0 }\left\{  K^-(x-\varepsilon)+\frac{e^{-\Phi(q) (\varepsilon-x)}}{ 1-\mathcal{I}^{(q,\Phi(q))}(\varepsilon)} \left[ \E_{\varepsilon}\left(  \I_{\{ \tau_0^-<\infty\}} e^{-q \tau_0^-} K^-(X_{\tau_0^-}-\varepsilon) \right)+K^+(0,\varepsilon) \right] \right\}  .
\end{align*}
Using Fubini's theorem and equation \eqref{eq:qpotentialdensitytkillingonexitinga} we have that for all $x<0$,
\begin{align}
\label{eq:expressionforK-}
K^-(x)=\int_{(-\infty,0)}K(0,y) \int_0^{\infty} e^{-qr} \P_x(X_r\in \dd y, r<\tau_0^+) \dd r=\int_{-\infty}^0 K(0,y) [e^{\Phi(q)x}W^{(q)}(-y) -W^{(q)}(x-y)] \dd y.
\end{align}
Recall that we are assuming that $K$ is bounded by a non-negative integrable function $C$. Then, we have that for any $y<0$ and $x<0$, 
\begin{align*}
|K(0,y) [e^{\Phi(q)x}W^{(q)}(-y) -W^{(q)}(x-y)]|\leq C(0,y) [e^{\Phi(q)x}W^{(q)}(-y) -W^{(q)}(x-y)].
\end{align*}
Moreover, we have that the quantity on the right-hand side of the equation above is integrable since for any $x<0$,
\begin{align*}
C^-(x):= \E_x\left(\int_0^{\tau_0^+} e^{-qr}C(0,X_r)\dd r \right)<\E_x\left(\int_0^{\infty} e^{-qr}C(U_r,X_r)\dd r \right)<\infty
\end{align*}
and 
\begin{align*}
C^-(x)=\int_{-\infty}^0 C(0,y) [e^{\Phi(q)x}W^{(q)}(-y) -W^{(q)}(x-y)] \dd y,
\end{align*}
where we used Fubini's theorem and equation \eqref{eq:qpotentialdensitytkillingonexitinga}. Furthermore, it can be seen (see, for example, the proof of Theorem 8.1 in \cite{kyprianou2014fluctuations}) that for any $q>0$ and $y<0$, the mapping $x\mapsto e^{-\Phi(q)x}W^{(q)}(x-y)$ is non-decreasing. Thus, for any $x<0$ and any $x_1<x_2<0$ such that $x\in [x_1,x_2]$,
\begin{align*}
C^-(x)&= e^{\Phi(q) x}\int_{-\infty}^0 C(0,y) [W^{(q)}(-y) -e^{-\Phi(q)x}W^{(q)}(x-y)] \dd y\\
&\leq e^{\Phi(q) x}\int_{-\infty}^0 C(0,y) [W^{(q)}(-y) -e^{-\Phi(q)x_1}W^{(q)}(x_1-y)] \dd y\\
&\leq  e^{\Phi(q) (x_2-x_1)}C^{-}(x_1).
\end{align*}
Hence, since $W$ is continuous on $(0,\infty)$ and by the dominated convergence theorem, we deduce that $K^-$ is a continuous function. Thus, we have that for any $x\leq 0$,
\begin{align}
\E_x&\left(\int_0^{\infty}e^{-qr} K(U_r,X_r) \dd r \right)\nonumber\\
\label{eq:limitexpressionofintKforxnegative}
&=   K^-(x)+\lim_{\varepsilon \downarrow 0 } \frac{e^{-\Phi(q) (\varepsilon-x)}}{ 1-\mathcal{I}^{(q,\Phi(q))}(\varepsilon)} \left[ \E_{\varepsilon}\left(  \I_{\{ \tau_0^-<\infty\}} e^{-q \tau_0^-} K^-(X_{\tau_0^-}-\varepsilon) \right)+K^+(0,\varepsilon) \right]   .
\end{align}
We then proceed to find the limit on the right-hand side of the equation above. For any $x>0$ and $\varepsilon>0$, we deduce from Fubini's theorem, equation \eqref{eq:jointlaplacetransformtau0-Xtau0-} and equation \eqref{eq:expressionforK-} that
\begin{align*}
&\E_{x}\left(  \I_{\{ \tau_0^-<\infty\}} e^{-q \tau_0^-} K^-(X_{\tau_0^-}-\varepsilon) \right)\\
&= e^{\Phi(q)(x-\varepsilon)}\mathcal{I}^{(q,\Phi(q))}(x) \int_{-\varepsilon}^0 K(0,y) W^{(q)}(-y)\dd y \\
&\qquad +\int_{-\infty}^{-\varepsilon}K(0,y)\left[e^{\Phi(q)(x-\varepsilon)}\mathcal{I}^{(q,\Phi(q))}(x) W^{(q)}(-y) - \E_x\left(\I_{\{\tau_0^-<\infty \}} e^{-q\tau_0^-}W^{(q)}(X_{\tau_0^-}-\varepsilon-y)\right) \right] \dd y.
\end{align*}
Let $x,\varepsilon>0$ and $y<-\varepsilon$. From the monotone convergence theorem and \eqref{eq:laplacetransformoftaua+beforetau0-} we have that 
\begin{align}
\E_x&\left(\I_{\{\tau_0^-<\infty \}} e^{-q\tau_0^-}W^{(q)}(X_{\tau_0^-}-\varepsilon-y)\right)\nonumber\\
&=\lim_{a\rightarrow \infty} \E_x\left(\I_{\{\tau_0^-<\tau_a^+ \}} e^{-q\tau_0^-}W^{(q)}(X_{\tau_0^-}-\varepsilon-y)\right)\nonumber\\
&=\lim_{a\rightarrow \infty} \E_x\left( e^{-q\tau_0^-\wedge \tau_a^+ } W^{(q)}(X_{\tau_0^-\wedge \tau_a^+}-\varepsilon-y)\right)-\lim_{a\rightarrow \infty} \E_x\left( \I_{\{\tau_a	^+< \tau_0^- \}} e^{-q \tau_a^+ } W^{(q)}(a-\varepsilon-y)\right)\nonumber\\
&=\lim_{a\rightarrow \infty} \E_{x-\varepsilon -y}\left( e^{-q\tau_{-\varepsilon -y}^-\wedge \tau_{a-\varepsilon-y}^+ } W^{(q)}(X_{\tau_{-\varepsilon-y}^-\wedge \tau_{a-\varepsilon-y}^+})\right)-\lim_{a\rightarrow \infty}  W^{(q)}(a-\varepsilon-y)\frac{W^{(q)}(x)}{W^{(q)}(a)}\nonumber\\
\label{eq:expectationofW(Xtau)}
&=W^{(q)}(x-\varepsilon-y)-e^{-\Phi(q)(\varepsilon+y)} W^{(q)}(x),
\end{align}
where the last equality follows since, for any $a\geq 0$, the process $e^{-q(t\wedge \tau_0^-\wedge \tau_a^+)}W^{(q)}(X_{t\wedge \tau_0^-\wedge \tau_a^+})$ is a martingale, the optional sampling theorem (note that $\tau_{-\varepsilon-y}^-\leq \tau_0^-$) and since $\lim_{a \rightarrow \infty} W^{(q)}(a-z)/W^{(q)}(a)=e^{-\Phi(q)z}$ for $z\leq a$ and $a\geq 0$ (see Exercises 8.5 and 8.12 in \cite{kyprianou2014fluctuations}). 
%
%
%
Hence, we obtain that for any $x>0$ and $\varepsilon>0$,
\begin{align*}
&\E_{x}\left(  \I_{\{ \tau_0^-<\infty\}} e^{-q \tau_0^-} K^-(X_{\tau_0^-}-\varepsilon) \right)\\
&= e^{\Phi(q)(x-\varepsilon)}\mathcal{I}^{(q,\Phi(q))}(x) \int_{-\varepsilon}^0 K(0,y) W^{(q)}(-y)\dd y \\
&\qquad +\int_{-\infty}^{-\varepsilon}K(0,y)\left[e^{\Phi(q)(x-\varepsilon)}\mathcal{I}^{(q,\Phi(q))}(x) W^{(q)}(-y) -W^{(q)}(x-\varepsilon-y)+e^{-\Phi(q)(\varepsilon+y)} W^{(q)}(x) \right] \dd y.
\end{align*}
In particular, when $x=\varepsilon$ we obtain that 
\begin{align*}
&\E_{\varepsilon}\left(  \I_{\{ \tau_0^-<\infty\}} e^{-q \tau_0^-} K^-(X_{\tau_0^-}-\varepsilon) \right)\\
&= \mathcal{I}^{(q,\Phi(q))}(\varepsilon) \int_{-\varepsilon}^0 K(0,y) W^{(q)}(-y)\dd y \\
&\qquad +\int_{-\infty}^{-\varepsilon}K(0,y)\left( [\mathcal{I}^{(q,\Phi(q))}(\varepsilon)-1] W^{(q)}(-y) +e^{-\Phi(q)(\varepsilon+y)} W^{(q)}(\varepsilon)   \right) \dd y.
\end{align*}
Thus, for any $\varepsilon>0$ and $x\leq 0$,
\begin{align*}
 & \frac{e^{-\Phi(q) (\varepsilon-x)}}{ 1-\mathcal{I}^{(q,\Phi(q))}(\varepsilon)}  \E_{\varepsilon}\left(  \I_{\{ \tau_0^-<\infty\}} e^{-q \tau_0^-} K^-(X_{\tau_0^-}-\varepsilon) \right)\\
&\qquad=\frac{e^{-\Phi(q) (\varepsilon-x)}}{ 1-\mathcal{I}^{(q,\Phi(q))}(\varepsilon)}  \mathcal{I}^{(q,\Phi(q))}(\varepsilon) \int_{-\varepsilon}^0 K(0,y) W^{(q)}(-y)\dd y \\
&\qquad\qquad +\frac{e^{-\Phi(q) (\varepsilon-x)}}{ 1-\mathcal{I}^{(q,\Phi(q))}(\varepsilon)} \int_{-\infty}^{-\varepsilon}K(0,y)\left( [\mathcal{I}^{(q,\Phi(q))}(\varepsilon)-1] W^{(q)}(-y) +e^{-\Phi(q)(\varepsilon+y)} W^{(q)}(\varepsilon)   \right) \dd y\\
&\qquad=  \Phi'(q)\frac{e^{\Phi(q) x}}{ W^{(q)}(\varepsilon)}   \int_{-\varepsilon}^0 K(0,y) W^{(q)}(-y)\dd y-e^{-\Phi(q) (\varepsilon-x)}  \int_{-\varepsilon}^0 K(0,y) W^{(q)}(-y)\dd y \\
&\qquad \qquad- e^{-\Phi(q) (\varepsilon-x)}\int_{-\infty}^{-\varepsilon}K(0,y)\left(  W^{(q)}(-y) -\Phi'(q)e^{-\Phi(q)y}   \right) \dd y,
\end{align*}
where we used that $ 1-\mathcal{I}^{(q,\Phi(q))}(\varepsilon)= [\Phi'(q)]^{-1} e^{-\Phi(q) \varepsilon}W^{(q)}(\varepsilon) $. Moreover, by using that $W^{(q)}$ is increasing on $(0,\infty)$ we see that 
\begin{align*}
0\leq  \left|\lim_{\varepsilon \downarrow 0} \Phi'(q)\frac{e^{\Phi(q) x}}{ W^{(q)}(\varepsilon)}   \int_{-\varepsilon}^0 K(0,y) W^{(q)}(-y)\dd y \right|\leq  \lim_{\varepsilon \downarrow 0}\Phi'(q)\frac{e^{\Phi(q) x}}{ W^{(q)}(\varepsilon)}   W^{(q)}(\varepsilon) \int_{-\varepsilon}^0 |K(0,y)| \dd y=0.
\end{align*}
Thus, we conclude that for any $x\leq 0$, 
\begin{align*}
\lim_{\varepsilon \downarrow 0} & \frac{e^{-\Phi(q) (\varepsilon-x)}}{ 1-\mathcal{I}^{(q,\Phi(q))}(\varepsilon)}  \E_{\varepsilon}\left(  \I_{\{ \tau_0^-<\infty\}} e^{-q \tau_0^-} K^-(X_{\tau_0^-}-\varepsilon) \right)\\
&=- e^{\Phi(q) x}\int_{-\infty}^{0}K(0,y)\left(  W^{(q)}(-y) -\Phi'(q)e^{-\Phi(q)y}   \right) \dd y.
\end{align*}
Substituting the expression above into \eqref{eq:limitexpressionofintKforxnegative} and using the expression obtained for $K^-(x)$ in \eqref{eq:expressionforK-}, we deduce that for any $x\leq 0$,
\begin{align}
\E_x&\left(\int_0^{\infty}e^{-qr} K(U_r,X_r) \dd r \right)\nonumber\\
&=   K^-(x)+\lim_{\varepsilon \downarrow 0 } \frac{e^{-\Phi(q) (\varepsilon-x)}}{ 1-\mathcal{I}^{(q,\Phi(q))}(\varepsilon)} \left[ \E_{\varepsilon}\left(  \I_{\{ \tau_0^-<\infty\}} e^{-q \tau_0^-} K^-(X_{\tau_0^-}-\varepsilon) \right)+K^+(0,\varepsilon) \right]\nonumber \\
\label{eq:expressionforintKUXwhenxnegative}
&=\int_{-\infty}^0 K(0,y) [\Phi'(q)e^{-\Phi(q)(y-x)} -W^{(q)}(x-y)] \dd y+ \Phi'(q)e^{\Phi(q) x}\lim_{\varepsilon \downarrow 0 } \frac{K^+(0,\varepsilon)}{ W^{(q)}(\varepsilon)}.
\end{align}
For the case when $u>0$ and $x>0$, using the expression above and the strong Markov property, we obtain that 
\begin{align*}
\E_{u,x}&\left(\int_0^{\infty}e^{-qr} K(U_r,X_r) \dd r \right)\\
&=\E_x\left(\int_0^{\tau_0^-}e^{-qr} K(u+r,X_r) \dd r \right)+\E_{x}\left(\I_{\{\tau_0^-<\infty \}}\int_{\tau_0^-}^{\infty}e^{-qr} K(U_r,X_r) \dd r \right)\\
&=\E_x\left(\int_0^{\tau_0^-}e^{-qr} K(u+r,X_r) \dd r \right)+\E_{x}\left(\I_{\{\tau_0^-<\infty \}} e^{q \tau_0^-} \E_{X_{\tau_0^-}}\left[\int_{0}^{\infty}e^{-qr} K(U_r,X_r) \dd r \right] \right)\\
&=K^+(u,x)\\
&\qquad+\int_{-\infty}^0 K(0,y) \left[\Phi'(q)e^{-\Phi(q)y} \E_{x}\left(\I_{\{\tau_0^-<\infty \}}e^{-q\tau_0^-} e^{\Phi(q) X_{\tau_0^-}} \right) - \E_{x}\left(\I_{\{\tau_0^-<\infty \}}e^{-q\tau_0^-}  W^{(q)}(X_{\tau_0^-}-y) \right)\right] \dd y\\
&\qquad+ \Phi'(q)\E_{x}\left(\I_{\{\tau_0^-<\infty \}}e^{-q\tau_0^-} e^{\Phi(q) X_{\tau_0^-}} \right)\lim_{\varepsilon \downarrow 0 } \frac{K^+(0,\varepsilon)}{ W^{(q)}(\varepsilon)}\\
&=K^+(u,x)\\
&\qquad+\int_{-\infty}^0 K(0,y) \left[\Phi'(q)e^{-\Phi(q)y} e^{\Phi(q)x}\mathcal{I}^{(q,\Phi(q))}(x) -W^{(q)}(x-y)+e^{-\Phi(q)y} W^{(q)}(x)\right] \dd y\\
&\qquad+ \Phi'(q)e^{\Phi(q)x}\mathcal{I}^{(q,\Phi(q))}(x)\lim_{\varepsilon \downarrow 0 } \frac{K^+(0,\varepsilon)}{ W^{(q)}(\varepsilon)}\\
&=K^+(u,x)+\int_{-\infty}^0 K(0,y) \left[\Phi'(q)e^{-\Phi(q)(y-x)} -W^{(q)}(x-y)\right] \dd y\\
&\qquad+ [\Phi'(q)e^{\Phi(q)x}-W^{(q)}(x)]\lim_{\varepsilon \downarrow 0 } \frac{K^+(0,\varepsilon)}{ W^{(q)}(\varepsilon)},
\end{align*}
where the second last equality follows from \eqref{eq:jointlaplacetransformtau0-Xtau0-}  and \eqref{eq:expectationofW(Xtau)} and the last by substituting the value of $\mathcal{I}^{(q,\Phi(q)}(x)$ (see \eqref{eq:functionI}). Lastly, since $W^{(q)}(x)=0$ for $x<0$, $W^{(q)}(0)=0$ when $X$ has paths of infinite variation and $W^{(q)}(0)>0$ when $X$ has paths of finite variation, we note that the expression above coincides with \eqref{eq:expressionforintKUXwhenxnegative} when $x\leq 0$. In other words, the expression above is valid for any $(u,x)\in E$. The proof is now complete.

\subsection{Proof of Theorem \ref{thm:solutiontooptimalstopping}}
\label{subsec:solutiontooptimalstopping}

We first state a verification Lemma that provides sufficient conditions for the optimality of a given candidate solution $\tau^*$.
\begin{lemma}
\label{lemma:verificationlemma}
Suppose that $\tau^*$ is candidate solution to the optimal stopping problem and let $V^*$ its corresponding value function, i.e., $V^*(u,x)= \E_{x,u}\left(\int_0^{\tau^*}e^{-rs}G(U_s,X_s)\dd s \right)$. Assume that 
\begin{enumerate}
\item[i)] $V^*(u,x)\geq 0$ for all $(u,x)\in E$.
\item[ii)] For each $(u,x)\in E$ and $N>0$, the stochastic process $\{Z_{t\wedge \tau_N^+}, t\geq 0 \}$ is a supermartingale under the measure $\P_{u,x}$, where
\begin{align*}
Z_t=e^{-rt}V^*(U_t,X_t)+\int_0^t e^{-rs}G(U_s,X_s)\dd s.
\end{align*}
\end{enumerate}
Then $V=V^*$ and the stopping time $\tau^*$ is an optimal stopping time for \eqref{eq:optimalstoppingproblem}.
\end{lemma}
\begin{proof}
From the definition of $V$, we deduce that $V\geq V^*$. On the other hand, due to the optimal sampling theorem we have that, for any $t\geq 0$, $N>0$ and any stopping time $\tau \in \mathcal{T}$, the stopped process $Z_{t\wedge \tau \wedge \tau_N^+ }$ is a supermartingale. This implies that for any $t\geq 0$, $N>0$  and $\tau \in \mathcal{T}$,
\begin{align*}
V^*(u,x) \geq \E_{u,x}\left(e^{-r(T\wedge \tau)} V^*(U_{T\wedge \tau },X_{T\wedge \tau})+\int_0^{T\wedge \tau } e^{-rs}G(U_s,X_s)\dd s\right)\geq \E_{u,x}\left( \int_0^{T\wedge \tau}e^{-rs} G(U_s,X_s)\dd s\right),
\end{align*}
where $T=t\wedge \tau_N^+$ and the last inequality follows since $V^*\geq 0$, by assumption. From the dominated convergence theorem we conclude (see \eqref{eq:OSintegrabilitycontidion}), by taking $t,N\rightarrow \infty$ in the equation above, that
\begin{align*}
V^*(u,x) \geq \E_{u,x}\left( \int_0^{\tau}e^{-rs} G(U_s,X_s)\dd s\right)
\end{align*}
for all $(u,x)\in E$ and $\tau \in \mathcal{T}$. Hence, we have that $V\leq V^*$, implying that $V=V^*$. Therefore, the supremum in \eqref{eq:optimalstoppingproblem} is attained by $\tau^*$ as claimed.
\end{proof}
For $z\leq 0$ fixed, we define the function
\begin{align*}
V_z(u,x):=\E_{x,u}\left(\int_0^{\tau_z^-}e^{-rs}G(U_s,X_s)\dd s \right),
\end{align*}
for $(u,x)\in E$. The following lemma gives a semi-explicit expression for $V_z$ in terms of the scale functions.
	
\begin{lemma}
For any $z<	0$ and $(u,x)\in E$ such that $x\geq z$ we have that
\begin{align}
V_z(u,x)&=K^+(u,x) + \left[e^{\Phi(r)z}W^{(r)}(x-z)-W^{(r)}(x) \right]\int_{(0,\infty)}\int_0^{\infty}  G(v,y)  \frac{y}{v}e^{-rv}\P(X_{v} \in \dd y) \dd v\nonumber\\
\label{eq:expressionforVz}
&\qquad+e^{\Phi(r) z}W^{(r)}(x-z)\int_{z}^{0}  G(0,y) e^{-\Phi(r) y}\dd y -\int_{z}^{0}  G(0,y)W^{(r)}(x-y)\dd y.
\end{align}
\end{lemma}

\begin{proof}
Note that for any $(u,x)\in E$,
\begin{align*}
V_z(u,x)=\E_{x,u}\left(\int_0^{\tau_z^-}e^{-rs}G(U_s,X_s)\I_{\{X_s>0 \}}\dd s \right)+\E_{x}\left(\int_0^{\tau_z^-}e^{-rs}G(0,X_s)\I_{\{X_s\leq 0 \}}\dd s \right),
\end{align*}
where the two terms on the right-hand side above are finite due to equation \eqref{eq:OSintegrabilitycontidion}. Using equation \eqref{eq:qpotentialdensitytkillingonexiting0} and Fubini's theorem we deduce that for any $x\geq z$,
\begin{align*}
\E_{x}\left(\int_0^{\tau_z^-}e^{-rs}G(0,X_s)\I_{\{X_s\leq 0 \}}\dd s \right)&=\E_{x-z}\left(\int_0^{\tau_0^-}e^{-rs}G(0,X_s+z)\I_{\{X_s+z\leq 0 \}}\dd s \right)\\
&=\int_{(0,-z]}  G(0,y+z)  \int_0^{\infty} e^{-rs}\P_{x-z}\left(X_s\in \dd y, s<\tau_0^- \right)\dd s\\
&=\int_{0}^{-z}  G(0,y+z) \left[ e^{-\Phi(r) y}W^{(r)}(x-z) -W^{(r)}(x-z-y) \right]\dd y\\
&=e^{\Phi(r) z}W^{(r)}(x-z)\int_{z}^{0}  G(0,y) e^{-\Phi(r) y}\dd y -\int_{z}^{0}  G(0,y)W^{(r)}(x-y)\dd y.
\end{align*}
On the other hand, from the strong Markov property, we have that for $(u,x)\in E$ such that $x\geq z$,  
\begin{align*}
\E_{x,u}\left(\int_0^{\tau_z^-}e^{-rs}G(U_s,X_s)\I_{\{X_s>0 \}}\dd s \right)
&=H(u,x)-\E_{x}(e^{-r \tau_z^-}\I_{\{\tau_z^- <\infty \}} H(0,X_{\tau_z^-})),
\end{align*}
where 
\begin{align}
\label{eq:definitionofH}
H(u,x)&:=\E_{u,x}\left(\int_0^{\infty} e^{-rs}G(U_s,X_s)\I_{\{X_s>0 \}}\dd s \right).
\end{align}
It follows from \eqref{eq:OSintegrabilitycontidion} that $|H(u,x)|<\infty$ for all $(u,x)\in E$. Hence, by using the potential measure of $(U,X)$ given in Corollary \ref{cor:qpotentialmeasureUX} we see that for any $(u,x)\in E$, 
\begin{align}
H(u,x)
&=\int_{(0,\infty)}\int_0^{\infty} G(v,y) \int_0^{\infty}e^{-rs}\P_{u,x}(X_s\in \dd y, U_s\in \dd v) \dd s \nonumber\\
&=\int_{(0,\infty)}\int_u^{\infty} G(v,y)e^{-r(v-u)}\P_x(X_{v-u}\in \dd y,v-u<\tau_0^-)\dd v\nonumber\\
&\qquad + \left[e^{\Phi(r) x}\Phi'(r)-W^{(r)}(x) \right]\int_{(0,\infty)}\int_0^{\infty}  G(v,y)  \frac{y}{v}e^{-rv}
\P(X_{v} \in \dd y) \dd v  \nonumber\\
\label{eq:expressionforH}
&=K^+(u,x) + \left[e^{\Phi(r) x}\Phi'(r)-W^{(r)}(x) \right]\int_{(0,\infty)}\int_0^{\infty}  G(v,y)  \frac{y}{v}e^{-rv}\P(X_{v} \in \dd y) \dd v .		
\end{align}
So that, by using \eqref{eq:jointlaplacetransformtau0-Xtau0-} and since $K^+(0,x)=0=W^{(q)}(x)$ when $x<0$, we deduce that for any $x\in \R$, 
\begin{align}
\E_{x}(e^{-r \tau_z^-}\I_{\{\tau_z^- <\infty \}} H(0,X_{\tau_z^-}))&=\Phi'(r)\E_x\left(e^{-r\tau_z^-+\Phi(r) X_{\tau_z^-}} \I_{\{\tau_z^-<\infty \}}\right) \int_{(0,\infty)}\int_0^{\infty}  G(v,y)  \frac{y}{v}e^{-rv}\P(X_{v} \in \dd y) \dd v  \nonumber\\
&=\Phi'(r)\E_{x-z}\left(e^{-r \tau_0^-+ \Phi(r) (X_{\tau_0^-}+z)}\I_{\{\tau_0^-<\infty \}} \right)\nonumber\\
&\qquad\times\left[ \int_{(0,\infty)}\int_0^{\infty}  G(v,y) \frac{y}{v}e^{-rv}\P(X_{v} \in \dd y) \dd v \right]\nonumber\\
\label{eq:HXtauz}
&=\left(\Phi'(r) e^{\Phi(r)x} -e^{\Phi(r)z}W^{(r)}(x-z) \right) \int_{(0,\infty)}\int_0^{\infty}  G(v,y)  \frac{y}{v}e^{-rv}\P(X_{v} \in \dd y) \dd v.
\end{align}
Therefore, we get that 
\begin{align*}
\E_{x,u}&\left(\int_0^{\tau_z^-}e^{-rs}G(U_s,X_s)\I_{\{X_s>0 \}}\dd s \right)\\
&=H(u,x)-\E_{x}(e^{-r \tau_z^-}\I_{\{\tau_z^- <\infty \}} H(0,X_{\tau_z^-}))\\
&=K^+(u,x) + \left[e^{\Phi(r)z}W^{(r)}(x-z)-W^{(r)}(x) \right]\int_{(0,\infty)}\int_0^{\infty}  G(v,y)  \frac{y}{v}e^{-rv}\P(X_{v} \in \dd y) \dd v 
\end{align*}
for any $(u,x)\in E$. The result follows. 
\end{proof}
For optimal stopping problems it is common to choose candidate solutions to satisfy the principle of smooth fit. Recall that we are assuming that $\sigma>0$ so that, in this case, $W^{(r)}$ is $C^2$ on $(0,\infty)$ with $W^{(r)\prime}(0+)=2/\sigma^2$ (see e.g. Theorem 3.10 and Lemma 3.2  \cite{kyprianou2011theory}). Then, by differentiating $V_z(u,x)$ with respect to $x$, we obtain for $z<x<0$ that,
\begin{align*}
\frac{\partial}{\partial x} V_z(0,x)&=e^{\Phi(r)z}W^{(r)\prime}(x-z)\left[\int_{(0,\infty)}\int_0^{\infty}  G(v,y)  \frac{y}{v}e^{-rv}\P(X_{v} \in \dd y) \dd v  +\int_{z}^0 G(0,y) e^{-\Phi(r) y}\dd y\right]\\
&\qquad -\int_{z}^x G(0,y)W^{(r)\prime}(x-y) \dd y.
\end{align*}
Then, by letting $x\downarrow z$, we see that the equation 
\begin{align*}
\frac{\partial}{\partial x} V_z(0,z+)=0
\end{align*}
is satisfied if and only if $z$ is solution to the equation
\begin{align*}
\int_{(0,\infty)}\int_0^{\infty}  G(v,y)  \frac{y}{v}e^{-rv}\P(X_{v} \in \dd y) \dd v  +\int_{z}^0 G(0,y) e^{-\Phi(r) y}=0.
\end{align*}
That is, if $z=z^*$. In the following lemma, we verify that the characterisation of $z^*$ given in the statement of Theorem \ref{thm:solutiontooptimalstopping} indeed holds and that condition i) given in Lemma \ref{lemma:verificationlemma} holds when $z\geq z^*$. 

\begin{lemma}
\label{eq:uniquenessofeqautionandVispositive}
For $z\leq 0$, we define the function
\begin{align*}
f(z)=\int_{(0,\infty)}\int_0^{\infty}  G(v,y)  \frac{y}{v}e^{-rv}\P(X_{v} \in \dd y) \dd v+\int_{z}^0 G(0,y) e^{-\Phi(r	) y}\dd y.
\end{align*}
Then the equation $f(z)=0$ has a unique solution $z^*$ on $(-\infty,0)$ such that $z^*\leq y_0$. Moreover, we have that $V_z(u,x)\geq 0$ for all $z\geq z^*$ and $(u,x)\in E$.
\end{lemma}
\begin{proof}
From Corollary \ref{cor:qpotentialmeasureUX} and by assumption \eqref{eq:OSintegrabilitycontidion} we know that  
\begin{align*}
0\leq f(0)
=\int_{(0,\infty)}\int_0^{\infty}  G(v,y)  \frac{y}{v}e^{-rv}\P(X_{v} \in \dd y) \dd v=\frac{1}{\Phi'(r)}\E\left(\int_0^{\infty} e^{-rs}G(U_s,X_s)\I_{\{X_s>0 \}}\dd s \right)<\infty.
\end{align*}
On the other hand, since $G(0,y)$ is non positive on $(-\infty,y_0)$, we have that $f(z)$ is increasing on $(-\infty,y_0)$ with $f(z)>0$ for all $ y_0\leq z\leq 0$ and $\lim_{y\rightarrow -\infty}f(z)=-\infty$, where the latter follows due to the assumption $\lim_{y\rightarrow -\infty }G(0,y)<0$. Then, due to the continuity of $f$, we see that the equation $f(z)=0$ has a unique solution $z^*$ on $(-\infty,y_0)$.\\

Next, we proceed to show the statement on $V_z$. Since $G(u,x)$ is non negative for all $(u,x)\in E$ such that $x\geq y_0$, we see that $V_z(x,u)\geq 0$ for all $(u,x)\in E$ and $z\geq y_0$. Take $z<0$ and $(u,x)\in E$ such that $x>z$, we see from \eqref{eq:expressionforVz} that 

\begin{align*}
\frac{\partial }{\partial z} V_z(u,x)=f(z)\frac{\partial }{\partial z} (e^{\Phi(r)z}W^{(r)}(x-z)).
\end{align*}
Note that we can write $e^{\Phi(r)z}W^{(r)}(x-z)=e^{\Phi(r)x}W_{\Phi(r)}(x-z)$, where $W_{\Phi(r)}$ is the $r$-scale function under the measure $\P^{\Phi(r)}$ (see e.g. the proof of Theorem 8.1 in \cite{kyprianou2014fluctuations}). Then we see that the mapping $z \mapsto e^{\Phi(r)z}W^{(r)}(x-z)$ is non increasing on $\R$, and then, $\frac{\partial }{\partial z} V_z(u,x)\leq 0$ for all $(u,x) \in E$ and $ z^*\leq z < 0$ such that $x>z$. We conclude that, for $(u,x)\in E$ fixed such that $x>z^*$, the mapping $z\mapsto V_z(x,u)$ is non increasing on $[z^*,x\wedge 0)$. Hence, 
\begin{align*}
 V_z(x,u)\geq  \lim_{z\uparrow x\wedge 0}V_z(u,x)\geq 0
\end{align*}
for any $z^*\leq z\leq 0$ and $(u,x)\in E$ such that $x>z$. The proof is now complete.
\end{proof}
For ease of notation, we denote $V^*=V_{z^*}$. Note that for any $(u,x)\in E$,
\begin{align}
\label{eq:expressionforVstar}
V^*(u,x)= K^+(u,x)  -W^{(r)}(x)\int_{(0,\infty)}\int_0^{\infty}  G(v,y)  \frac{y}{v}e^{-rv}\P(X_{v} \in \dd y) \dd v  
-\int_{z^*}^0 G(0,y)W^{(r)}(x-y) \dd y.
\end{align}
Next, we show that the supermartingale property holds for $V^*$.

\begin{lemma}
\label{lemma:supermartingalepropertyforV*}
For any $N>0$ we have that the process $\{ Z_{t\wedge \tau_N^+}^*, t\geq 0\}$ is a supermartingale under $\P_{u,x}$, for each $(u,x)\in E$, where
\begin{align*}
Z_t^{*}=e^{-rt}V^*(U_t,X_t)+\int_0^t e^{-rs} G(U_s,X_s)\dd s.
\end{align*}
\end{lemma}
\begin{proof}
Due to the fact that $X$ is of infinite variation, we have that $\P(\tau_0^-=0)=1$ and $W^{(r)}$ is continuous on $\R$. Thus, $V_z$ is continuous on $E$ and $\lim_{h\downarrow 0} V^*(u,h)=V^*(0,0)$ for any $u\geq 0$. Moreover, since we are assuming that $\sigma>0$, we have that $W^{(r)}\in C^2(0,\infty)$ with $W^{(r)\prime}(0+)=2/\sigma^2$ (see Lemma 3.2 and Theorem 3.10 in \cite{kyprianou2011theory}). Hence, we have that $V_z(u,x)$ is  $C^{1,1}$ function on $[0,\infty)\times [0,\infty)$ and the second derivative $\frac{\partial^2}{\partial x^2} V^*(u,x)$ exists and is continuous on $(0,\infty)$ for all $u\geq 0$ (recall that we are assuming that $K^+$ is $C^{1,2}$ function on $[0,\infty)\times [0,\infty)$). On the other hand, for $z^*<x<0$ we have that		 
\begin{align}
\label{eq:derivativeofV*withrespecttox}
\frac{\partial}{\partial x} V^*(0,x)&=\int_{z^*}^x G(0,y)W^{(r)\prime}(x-y) \dd y,\\
\frac{\partial^2}{\partial x^2} V^*(0,x)&=\int_{z^*}^x G(0,y)W^{(r)\prime \prime}(x-y) \dd y+G(0,x)W^{(r)\prime}(0+).
\end{align}
Hence, we see that $V_z$ is $C^1$ function on $(-\infty,0]$ and its second derivative exists and is continuous on $(-\infty,0]\setminus \{z^*\}$. Furthermore, by applying formula \eqref{eq:calucationofintegraluptoinfinity2ndformula} to $H(0,0)$ (see equation \eqref{eq:definitionofH}) and from \eqref{eq:expressionforH} we see that 
\begin{align}
\label{eq:smoothpastingatzero}
\Phi'(r)\int_{(0,\infty)}\int_0^{\infty}  G(v,y) e^{-rv} \frac{y}{v}\P(X_{v} \in \dd y) \dd v 	 =H(0,0)=\lim_{\varepsilon\downarrow 0} \frac{K^+(0,\varepsilon)}{\psi'(\Phi(r)) W^{(r)}(\varepsilon)}=\Phi'(r)\frac{\sigma^2}{2}\frac{\partial }{\partial x} K^+(0,0+).
\end{align}
Hence, from the equality above and \eqref{eq:expressionforVstar} we deduce that 
\begin{align*}
\frac{\partial}{\partial x} V^*(0,0+)=\frac{\partial}{\partial x} V^*(0,0-).
\end{align*}

It can be easily seen that the process $\{ Z_{t\wedge \tau_{z^*}^-}^*,t\geq 0 \}$ is a martingale. Hence, by using standard arguments (cf. \cite{peskir2006optimal}, Section III.7.2 or \cite{lamberton2008critical}, Proposition 2.4), we deduce that 
\begin{align}
\label{eq:pdeforVstarandG}
\mathcal{A}_{U,X}(V^*)(u,x)+G=rV^*
\end{align}
for all $(u,x)\in E$ such that $x\geq z^*$, where from Corollary \ref{cor:infinitesimalgeneratorofgtX} we obtain that
\begin{align*}
\mathcal{A}_{(U,X)}(V^*)(u,x)&=\frac{\partial}{\partial u}V^*(u,x)\I_{\{x>0 \}}-\mu \frac{\partial}{\partial x} V^*(u,x)+\frac{1}{2}\sigma^2 \frac{\partial^2}{\partial x^2} V^*(u,x) \\
&\qquad +\int_{(-\infty,0)} \left( V^*(u,x+y) -V^*(u,x)-y\I_{\{y>-1\}}\frac{\partial }{\partial x} V^*(u,x) \right)\I_{\{x+y >0 \}}\Pi(\dd y)\\
& \qquad + \int_{(-\infty,0)} \left( V^*(0,x+y) -V^*(0,x)-y\I_{\{y>-1\}}\frac{\partial }{\partial x} V^*(0,x) \right)\I_{\{x\leq 0 \}}\Pi(\dd y)\\
& \qquad +  \int_{(-\infty,0)} \left( V^*(0,x+y)-V^*(0,x)-y\I_{\{y>-1\}} \frac{\partial }{\partial x}V^*(u,x) \right)\I_{\{x>0 \}}\I_{\{x+y <0 \}}\Pi(\dd y).
\end{align*}
Hence, for any $(u,x)\in E$, $t\geq 0$ and $N>0$, by applying the version of It\^o formula derived in Theorem \ref{thm:ItoformulaforgttXt} and letting $T=t \wedge \tau_N^+$, we deduce that, under $\P_{u,x}$,

\begin{align*}
e^{-rT}&V^*(U_T,X_T)\\
&=V^*(u,x)-\int_0^T e^{-rs}rV^*(U_s,X_s)\dd s + \int_{0}^{T } e^{-rs}\frac{\partial }{\partial u} V^*(U_s,X_{s})\I_{\{ X_s>0\}} \dd s\\
&\qquad+\int_{0}^{T}e^{-rs} \frac{\partial }{\partial x} V^*(U_{s-},X_{s-})\dd X_s+\frac{1}{2} \sigma^2 \int_{0}^{T } e^{-rs}\frac{\partial^2 }{\partial x^2} V^*(U_{s},X_{s})\dd s\\
& \qquad +\int_{[0,T] }e^{-rs} \int_{(-\infty,0)} \left( V^*(U_{s},X_{s^-}+y) -V^*(U_{s-},X_{s-})-y\frac{\partial }{\partial x} V^*(U_{s-},X_{s-}) \right)N(\dd s\times \dd y)\\
&=M_{T}-\int_0^T e^{-rs}[\mathcal{A}_{(U,X)}(V^*)(U_s,X_s)-rV^*(U_s,X_s)]\dd s \\
&=M_{T}-\int_0^T e^{-rs}G(U_s,X_s)\I_{\{X_s>z^* \}}\dd s,
\end{align*}
where $\{ M_{t\wedge \tau_N^+}, t\geq 0\}$ is a martingale and the last equality follows since $V^*(0,x)=0$ for all $x\leq z^*$ and then $\mathcal{A}_{U,X}(V^*)(0,x)=0$ for all $x\leq z^*$. Hence, we deduce that, for each $t\geq 0$ and $N>0$,
\begin{align*}
Z^*_{t\wedge \tau_N^+}=e^{-r(t\wedge \tau_N^+)}V^*(U_{t\wedge \tau_N^+},X_{t\wedge \tau_N^+})+\int_0^{t\wedge \tau_N^+}e^{-rs} G(U_s,X_s)	\dd s=M_{t\wedge \tau_N^+}+\int_0^{t\wedge \tau_N^+} e^{-rs}G(0,X_s)\I_{\{X_s\leq z^* \}}\dd s.
\end{align*}
Hence, since $G(0,x)\leq 0$ for all $x\leq z^*\leq  y_0$, we conclude that $\{Z_{t\wedge \tau_N^+}^*,t\geq 0\}$ is a supermartingale as claimed.
\end{proof}
Then the statements in Theorem \ref{thm:solutiontooptimalstopping} follow from Lemmas \ref{lemma:verificationlemma}, \ref{eq:uniquenessofeqautionandVispositive} and \ref{lemma:supermartingalepropertyforV*}. Finally, note from \eqref{eq:derivativeofV*withrespecttox} that the smooth fit property holds in this case.

\subsection{Proof of Proposition \ref{prop:SolutiontocorporatebankrupcyOS}}
\label{subsection:Proofofsolutionbankrupcy}
Note that the optimal stopping problem is of the form \eqref{eq:optimalstoppingproblem}, with  $G(u,x)=e^{x+\beta u}-K$. 
From the definition of $\psi$ and since $U_s\leq u+s$ under $\P_{u,x}$, for any $(u,x)\in E$, we have that the assumption $r>\psi(1)+\beta$ implies that
\begin{align*}
\E_{u,x}\left(\int_0^{\infty} e^{-rs}|G(U_s,X_s)|\dd s \right)
&\leq e^x\E\left(\int_0^{\infty} e^{-rs}e^{X_s+\beta(u+s)}\dd s \right)+\frac{K}{r}
=\frac{e^{x+\beta u}}{r-\psi(1)-\beta}  +\frac{K}{r}<\infty.
\end{align*}
Due to \eqref{eq:qpotentialdensitytkillingonexiting0} we see that for any $x>0$ and $u>0$,
\begin{align*}
K^+(u,x)&=\E_{x}\left(\int_0^{\tau_0^-} e^{-rs}[e^{X_s+\beta(u+s)	}-K]\dd s\right)\\
&=\int_{(0,\infty)} e^{\beta u}e^{y} \int_{0}^{\infty}e^{-(r-\beta) s} \P_x(X_s \in \dd y,s<\tau_0^-)\dd s-\frac{K}{r}[1-\E_x(e^{-r\tau_0^-}\I_{\{\tau_0^-<\infty\}})]\\
&=e^{\beta u} \left[ \frac{W^{(r-\beta)}(x)}{\Phi(r-\beta)-1} -\int_0^x e^y W^{(r-\beta)}(x-y)\dd y \right] +K\int_0^xW^{(r)}(y)\dd y-\frac{K}{\Phi(r)}W^{(r)}(x),
\end{align*}
where we used that $\Phi(r-\beta)>1$ due to the assumption $r>\psi(1)+\beta$ and since $\Phi$ is the right-inverse of $\psi$. On the other hand,
\begin{align*}
\int_z^{0}G(0,y) e^{-\Phi(r)y}\dd y=	\int_z^{0}[e^y-K] e^{-\Phi(r)y}\dd y=\frac{e^{-(\Phi(r)-1)z}-1}{\Phi(r)-1}-K\frac{e^{-\Phi(r)z}-1}{\Phi(r)}.
\end{align*} 
By differentiating $K^+$ (see \eqref{eq:smoothpastingatzero}) or by using Kendall's identity (see \eqref{eq:Kendallsidentity}), we can easily see that
\begin{align*}
\int_{(0,\infty)}\int_0^{\infty}  G(v,y)  \frac{y}{v}e^{-rv}\P(X_{v} \in \dd y) \dd v 
&=\frac{1}{\Phi(r-\beta)-1}-\frac{K}{\Phi(r)}.
\end{align*}
Then, from Theorem \ref{thm:solutiontooptimalstopping} we know that $\tau_{z^*}^-$ is optimal, where in this case $z^*$ is the unique solution on $(-\infty,0)$ to the equation 
\begin{align*}
\frac{e^{-(\Phi(r)-1)z}}{\Phi(r)-1}-\frac{Ke^{-\Phi(r)z}}{\Phi(r)}+\frac{1}{\Phi(r-\beta)-1}-\frac{1}{\Phi(r)-1}=0.
\end{align*}
Lastly, from Theorem \ref{thm:solutiontooptimalstopping}, we see that the value function is given by
\begin{align*}
V(u,x)&= K^+(u,x)  -W^{(r)}(x)\int_{(0,\infty)}\int_0^{\infty}  G(v,y)  \frac{y}{v}e^{-rv}\P(X_{v} \in \dd y) \dd v  
-\int_{z^*}^0 G(0,y)W^{(r)}(x-y) \dd y\\
&=e^{\beta u} \left[ \frac{W^{(r-\beta)}(x)}{\Phi(r-\beta)-1} -\int_0^x e^y W^{(r-\beta)}(x-y)\dd y \right] +K\int_0^xW^{(r)}(y)\dd y-\frac{K}{\Phi(r)}W^{(r)}(x)\\
&\qquad-W^{(r)}(x)\left[\frac{1}{\Phi(r-\beta)-1}-\frac{K}{\Phi(r)} \right]-\int_{z^*}^0 [e^{y}-K]W^{(r)}(x-y) \dd y,
\end{align*}
for any $(u,x)\in E$. The proof is now complete.

\bibliographystyle{apalike}
\bibliography{lastzeroprocess}

\end{document}